\numberwithin{equation}{section}
\newcommand{\defeq}{\vcentcolon=}
\newcommand{\eqdef}{=\vcentcolon}
\newcommand{\N}{\mathbb{N}}
\newcommand{\Z}{\mathbb{Z}}
\newcommand{\R}{\mathbb{R}}
\newcommand{\C}{\mathbb{C}}
\newcommand{\imag}{\mathrm{i}}
\newcommand{\Real}{\mathrm{Re}}
\newcommand{\inner}{\mathrm{int}}
\newcommand{\1}{\mathbbm{1}}
\newcommand{\<}{\langle}
\renewcommand{\>}{\rangle}
\newcommand{\x}{\mathsf{x}}
\newcommand{\y}{\mathsf{y}}
\newcommand{\p}{p_\lambda}
\newcommand{\q}{q_\lambda}
\newcommand{\Prm}{\mathrm{P}}
\newcommand{\Erm}{\mathrm{E}}
\newcommand{\Prmp}{\mathrm{P}_{\!p}}
\newcommand{\Prob}{\mathbb{P}}
\newcommand{\E}{\mathbb{E}}
\newcommand{\B}{\mathcal{B}}
\newcommand{\Cluster}{\mathcal{C}}
\newcommand{\disteq}{%
  \mathrel{\vbox{\offinterlineskip\ialign{%
    \hfil##\hfil\cr
    $\scriptscriptstyle\mathrm{law}$\cr
    \noalign{\kern.15ex}
    $=$\cr
}}}}
\newcommand{\distto}{%
  \mathrel{\vbox{\offinterlineskip\ialign{%
    \hfil##\hfil\cr
    $\scriptscriptstyle\mathrm{d}$\cr
    \noalign{\kern.15ex}
    $\to$\cr
}}}}
\newcommand{\Probto}{%
  \mathrel{\vbox{\offinterlineskip\ialign{%
    \hfil##\hfil\cr
    $\scriptscriptstyle\!\Prob$\cr
    \noalign{\kern.15ex}
    $\to$\cr
}}}}
\newcommand{\distleq}{\preccurlyeq}
\newcommand{\F}{\mathcal{F}}
\newcommand{\G}{\mathcal{G}}
\newcommand{\dx}{\mathrm{d}\mathit{x}}
\newcommand{\vel}{\overline{\mathrm{v}}}
\newcommand{\lambdacrit}{\lambda_{\mathrm{c}}}
\newcommand{\pesc}{\mathit{p}_{\mathrm{esc}}}
\newcommand{\bfnull}{\mathbf{0}}
\newcommand{\omegaprune}{\omega^{\mathrm{p}}}
\newcommand{\cp}{c^{\mathrm{p}}}
\newcommand{\domega}{\mathrm{d}\omega}
\theoremstyle{plain}
\newtheorem{theorem}{Theorem}[section]
\newtheorem{lemma}[theorem]{Lemma}
\newtheorem{proposition}[theorem]{Proposition}
\theoremstyle{remark}
\theoremstyle{definition}
\newtheorem{definition}[theorem]{Definition}
\begin{document}

\title[Random walk on $1$D percolation cluster at critical bias]{The speed
of critically biased random walk in a one-dimensional percolation model}

\authors{Jan-Erik L\"ubbers\footnote{Technische Universit\"at Darmstadt, Germany. Email: luebbers@mathematik.tu-darmstadt.de}
\and
Matthias Meiners\footnote{Universit\"at Innsbruck, Austria. Email: matthias.meiners@uibk.ac.at}}

\begin{abstract}
We consider biased random walks in a one-dimensional percolation model.
This model goes back to Axel\-son-Fisk and H\"aggstr\"om
and exhibits the same phase transition as biased random walk on the
infinite cluster of supercritical Bernoulli bond percolation on $\Z^d$,
namely, for some critical value $\lambdacrit>0$ of the bias,
it holds that the asymptotic linear speed $\vel$ of the walk is strictly
positive if the bias $\lambda$ is strictly smaller than $\lambdacrit$,
whereas $\vel=0$ if $\lambda \geq \lambdacrit$.

We show that at the critical bias $\lambda = \lambdacrit$, the
displacement of the random walk from the origin is of order $n/\log n$.
This is in accordance with simulation results by Dhar and Stauffer for
biased random walk on the infinite cluster of supercritical bond
percolation on $\Z^d$.

Our result is based on fine estimates for the tails of suitable
regeneration times.
As a by-product of these estimates we also obtain the order of
fluctuations of the walk in the sub-ballistic and in the ballistic, 
nondiffusive phase.
\smallskip

\noindent
{\bf Keywords:} Biased random walk $\cdot$ critical bias $\cdot$ ladder
graph $\cdot$ percolation
\\{\bf Subclass:} MSC 60K37 $\cdot$ MSC 82B43
\end{abstract}

\maketitle

\section{Introduction and main results}

\subsection{Introduction}

In the physics literature, biased random walk on a percolation cluster 
is considered as a model for transport in an inhomogeneous medium.
The mathematically rigorous study of biased random walk on the infinite 
cluster of supercritical Bernoulli bond percolation on $\Z^d$ was 
initiated in two parallel papers
by Berger, Gantert and Peres \cite{Berger+Gantert+Peres:2003}, and 
Sznitman \cite{Sznitman:2003}.
Both papers establish an interesting phenomenon,
namely, if the strength of the bias is positive but small, then the 
linear speed of the walk is positive,
whereas it is zero if the strength of the bias is sufficiently large.
The sharpness of the phase transition, which had been conjectured in 
the physics literature by Barma and Dhar \cite{Barma+Dhar:1983}, 
remained open.
An indication for the validity of the conjecture was provided by work of 
Lyons, Pemantle and Peres \cite{Lyons+Pemantle+Peres:1996},
who had shown that there is an analogous phase transition for the 
simpler model of biased random walk on a Galton--Watson tree with leaves,
and that the phase transition in this model is indeed sharp. Moreover, 
the result of Lyons, Pemantle and Peres includes the statement that the 
speed at the critical bias equals zero.
A rigorous proof of the sharpness of the phase transition for biased 
random walk on the infinite cluster of supercritical Bernoulli bond 
percolation on $\Z^d$
was eventually given by Fribergh and Hammond \cite{Fribergh+Hammond:2014}.
In this paper, the authors conjecture that the speed at the critical 
bias equals zero.
What is more, in the physics literature, it was conjectured by Dhar and Stauffer 
\cite{Dhar+Stauffer:1998}
that the displacement of the critically biased random walk from the 
origin at time $n$
(in the direction of the bias) is of the order $n/\log n$.

In the present paper, we shall prove this conjecture for biased random 
walk on a one-dimensional percolation cluster.
This model was created by Axelson-Fisk and H\"aggstr\"om
in \cite{Axelson-Fisk+H"aggstr"om:2009b,Axelson-Fisk+H"aggstr"om:2009}
to be simpler than biased random walk on the infinite cluster of
supercritical Bernoulli bond percolation on $\Z^d$,
but to display qualitatively similar phenomena.
Moreover, the initial hope might have been to construct a model that
is even amenable to explicit calculations.
And indeed, Axelson-Fisk and H\"aggstr\"om 
\cite{Axelson-Fisk+H"aggstr"om:2009b}
were able to express the critical bias as an elementary function of a 
percolation parameter of the model.
However, more complicated quantities such as the asymptotic linear speed 
as a function of the percolation parameter and the strength of the bias
withstood explicit calculation so far.

Our proof of the fact that the displacement of the critically biased 
random walk at time $n$ is of the order $n / \log n$
is based on refined estimates for the tails of suitable regeneration 
times that were introduced and studied in a joint paper of the second author
with Gantert and M\"uller \cite{Gantert+al:2018}. Our bounds on the 
tails of the regeneration times do not only hold for the critical bias
but for a large range of biases including the whole sub-ballistic and 
the ballistic, nondiffusive phase.
This allows us to deduce the order of the fluctuations of the walk in 
these phases.
Our result on the fluctuations of the biased random walk in the 
sub-ballistic phase
parallels the corresponding results for biased random walk on a 
Galton--Watson tree with leaves
due to Ben Arous et al.\ \cite{BenArous+Fribergh+Gantert+Hammond:2012}
and is more precise than the corresponding result for random walk on the 
infinite cluster of supercritical Bernoulli bond percolation on $\Z^d$
obtained in \cite{Fribergh+Hammond:2014}.

\subsection{Model description}

In this section, we give a brief introduction to the model and review some
results that are required for the formulation of our main results.

Consider the \emph{ladder graph} $G = (V,E)$
with vertex set $V = \Z \times \{0,1\}$ and edge set $E = \{ \<u,v\> \in
V^2: |u-v| = 1\}$ where $|\cdot|$ denotes the usual Euclidean norm on $\R^2$.
If $v = (x,y) \in V$, we write $\x(v)=x$ and $\y(v)=y$,
and call $x$ and $y$ the $\x$- and $\y$-coordinate of $v$, respectively.

In a first step, we consider i.i.d.\ bond percolation with retention
parameter $p \in (0,1)$ on $G$,
i.e., each edge $e \in E$ is retained independently of all other edges
with probability $p$,
and deleted with probability $1-p$.
As usual, we call an edge $e\in E$ \emph{open} if it is retained and
\emph{closed} if it is deleted.
The state space of the percolation process is $\Omega = \{0,1\}^E$,
which we endow with the product $\sigma$-algebra $\F$.
The elements $\omega \in \Omega$ are called \emph{configurations}.
We interpret $\omega(e)=1$ for $\omega \in \Omega$ and $e \in E$
as the edge $e$ being open in the configuration $\omega$.
A path between $u,v \in V$ is a finite sequence $P = (e_1,\ldots,e_n)$
of edges $e_1 = \< u_0,u_1 \>,\ldots,e_n = \<u_{n-1},u_n\> \in E$ with
$u_0=u$ and $u_n = v$.
The path $P$ is called \emph{open} if $\omega(e_k)=1$ for $k=1,\ldots,n$.
Let $\Omega_{N_1,N_2}$ be the event that there exists an open path
connecting a vertex
with $\x$-coordinate $-N_1$ to a vertex with $\x$-coordinate $N_2$,
and let $\Prm_{p,N_1,N_2}$ be the probability measure on $(\Omega,\F)$
arising from conditioning i.i.d.\ bond percolation with parameter $p$
on the event $\Omega_{N_1,N_2}$.
Then $\Prm_{p,N_1,N_2}$ converges weakly as $N_1,N_2 \to \infty$ to a
probability measure $\Prmp^*$
on $(\Omega,\F)$.

\begin{proposition}[Theorem 2.1 and Corollary 2.2 in
\cite{Axelson-Fisk+H"aggstr"om:2009}]
For any $p \in(0,1)$, as $N_1,N_2 \to \infty$, the probability measures
$\Prm_{p,N_1,N_2}$ converge weakly to
a translation invariant probability measure $\Prmp^*$ on $(\Omega,\F)$
satisfying $\Prmp^*(\Omega^*) = 1$
where $\Omega^* = \bigcap_{N_1,N_2 \in \N} \Omega_{N_1,N_2}$ is the event
that a bi-infinite open path exists.
\end{proposition}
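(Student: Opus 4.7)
The plan is to exploit the quasi-one-dimensional structure of the ladder graph to extract a renewal (pivot) decomposition under $\Prm_{p,N_1,N_2}$, use it to construct the candidate limit $\Prmp^*$ as a stationary measure, and then deduce weak convergence by showing that cylinder events stabilise.

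First, I would identify a suitable notion of a \emph{pivot column}. Since any open left-right crossing witnessing $\Omega_{N_1,N_2}$ must traverse the vertical cut between each pair of consecutive columns $n$ and $n+1$, at least one of the two horizontal edges $\<(n,0),(n+1,0)\>,\<(n,1),(n+1,1)\>$ in that cut must be open. A column $n$ is called a pivot if the configuration in a bounded neighbourhood of $n$ forces the crossing to go through a prescribed edge pattern (for instance, exactly one horizontal edge open with suitable rungs closed) and, crucially, causes the requirement of a left-right crossing to split into two independent requirements on the two sides. A direct finite-energy calculation, combined with FKG, shows that pivots occur with positive density uniformly in $N_1, N_2$.

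Second, using this pivot structure, I would construct $\Prmp^*$ directly. The sequence of pivots $(\Pi_k)_{k\in\Z}$ together with the inter-pivot configurations, which are i.i.d.\ pieces distributed as finite-segment percolation conditioned on a left-right crossing and on containing no internal pivot, forms a stationary marked point process on $\Z$. Standard renewal theory, with the customary size-biasing of the piece containing the origin, produces a translation invariant probability measure $\Prmp^*$ on $(\Omega,\F)$. Since $\Prmp^*$ almost surely has infinitely many pivots in both directions and each inter-pivot piece contains a crossing by construction, gluing the pieces yields a bi-infinite open path almost surely, i.e.\ $\Prmp^*(\Omega^*)=1$.

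Finally, for weak convergence it suffices to verify $\Prm_{p,N_1,N_2}(A)\to\Prmp^*(A)$ for every cylinder event $A$ depending on edges in a finite window $[-M,M]\times\{0,1\}$. With $\Prm_{p,N_1,N_2}$-probability tending to one, there exist pivots $L<-M<M<R$ whose distances to the window remain tight as $N_1,N_2\to\infty$; conditionally on their positions and local states, the configuration on $[L,R]$ is distributed exactly as a renewal piece of $\Prmp^*$, while the conditioning on $\Omega_{N_1,N_2}$ factorises through $L$ and $R$ into independent crossing requirements on the three intervals $(-\infty,L]$, $[L,R]$, $[R,\infty)$. The main obstacle is the quantitative control in this last step: proving tightness of the distances from $L,R$ to $[-M,M]$ and that the conditioning genuinely decouples across pivots. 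This boils down to an exponential-in-distance estimate on the crossing probability $\Prmp(\Omega_{N_1,N_2})$, obtainable from a transfer-matrix / subadditivity argument on the two-row ladder, together with careful bookkeeping of the decoupling at pivot columns; these are the technical heart of the argument.
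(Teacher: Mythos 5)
First, a point of comparison: the paper you were given does not prove this proposition at all --- it is imported verbatim from Axelson-Fisk and H\"aggstr\"om \cite{Axelson-Fisk+H"aggstr"om:2009}, so there is no in-paper proof to match your argument against. The cited source does not argue via a pivot/renewal decomposition; it works with an explicit column-by-column (Markovian/transfer-matrix type) description of the crossing-conditioned measure, whose spectral data is what makes $\lambdacrit$ an elementary function of $p$. Your route is instead essentially a reconstruction of the regeneration structure that the present paper only uses \emph{downstream} of the proposition: your ``pivot columns'' are the pre-regeneration points (isolated vertex in one row forcing every crossing through the opposite vertex), and your ``stationarised renewal process with size-biasing'' is exactly the relation between $\Prmp^*$ and the cycle-stationary law $\Prmp^\circ$ of Definition \ref{def:cycle-stationary percolation law}. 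So the plan is coherent and, if executed, would give an alternative proof; but note that the paper derives the i.i.d.\ cycle structure \emph{from} the existence of $\Prmp^*$, whereas you would be building $\Prmp^*$ from the cycle structure, which is legitimate but means you cannot borrow those statements.

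Two genuine gaps remain as written. (i) The uniform positive density of pivots under $\Prm_{p,N_1,N_2}$ cannot be waved through with ``finite energy plus FKG'': creating a pivot requires \emph{closing} edges, and the conditioned measure is not obviously positively associated, since the crossing event $\Omega_{N_1,N_2}$, while increasing, is not closed under pointwise minima (two crossings using different rows have no common sub-crossing), so the FKG lattice condition fails for the conditioned law. What is actually needed is a local-surgery estimate: conditionally on the configuration outside a fixed window and on the crossing event, the probability of rerouting the crossing through row $0$ and isolating the opposite vertex is bounded below uniformly; this is true but has to be argued directly. (ii) The decisive step --- that the positions of the nearest pivots around a fixed window, and the conditional law of the enclosed piece, converge as $N_1,N_2\to\infty$ --- is precisely what the proposition asserts, and your sketch defers it (``the technical heart''). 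The decoupling at a pivot is exact, but the pivot process under $\Prm_{p,N_1,N_2}$ is a renewal \emph{bridge} pinned near $-N_1$ and $N_2$, and one must prove that its law near the origin converges to the stationary renewal law; exponential decay of the crossing probability by itself is not the right ingredient --- one needs uniform (in $N_1,N_2$) exponential tails for the inter-pivot spacings of the conditioned measures together with a renewal/coupling argument. Until (i) and (ii) are carried out, the proposal is an outline rather than a proof.
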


It is easily seen that $\Prmp^*$-almost surely (a.\,s.), there is a unique infinite
open cluster $\Cluster \subseteq V$
consisting of all vertices $v \in V$ which are connected via open paths to
vertices with arbitrary $\x$-coordinate.
We define $\Prmp(\cdot) \defeq \Prmp^*( \cdot | \bfnull  \in \Cluster)$
where $\bfnull \defeq (0,0)$.

Henceforth, we fix a parameter $p \in (0,1)$.
Most of the constants and objects defined below will depend on $p$,
but this will usually not figure in the notation.

After choosing an environment $\omega \in \{0,1\}^E$ according to $\Prmp$,
we define a random walk on $G$
with bias $\lambda \in \R$ as follows.
Let the conductances $(c(e))_{e \in E}$ be defined via
\begin{equation*}
c(\<u,v\>)	\defeq	e^{\lambda (\x(u) + \x(v))},	\quad	\<u,v\> \in E.
\end{equation*}
Then $(Y_n)_{n\in\N_0}$ is defined as the lazy random walk with conductances $(c(e))_{e \in E}$ on $\Cluster$
starting at $Y_0 \defeq \bfnull $.
More precisely, when at $u \in V$, the walk attempts to move to a neighbor
$v \in V$ in $G$
with probability proportional to $c(\<u,v\>)$.
The step is actually performed if $\omega( \<u,v\>)=1$, otherwise, the
walk stays put.
We denote the law of $(Y_n)_{n \in \N_0}$ on $(V^{\N_0}, \G)$ by
$P_{\omega,\lambda}$,
where $\G$ is the product $\sigma$-algebra on $V^{\N_0}$.
Further, we write $P_{\omega,\lambda}^v$ for the law of the Markov chain
with the same transition probabilities but with start at $v \in V$.
By the symmetry of the law of $\omega$ it suffices to consider the case $\lambda>0$.

The distribution $P_{\omega,\lambda}$ is the \emph{quenched} law of
$(Y_n)_{n \in \N_0}$ (given $\omega$).
The corresponding \emph{annealed} law is obtained by averaging the
quenched laws over $\omega \in \Omega$ using $\Prmp$.
Formally, we define the probability measure $\Prob$ on $\{0,1\}^E \times
V^{\N_0}$ as follows.
For $A \in \F, B\in\G$ set
\begin{equation}	\label{eq:Prob}
\Prob(A \times B) \defeq \int_A P_{\omega,\lambda}(B) \, \Prmp(\domega).
\end{equation}
Notice that $\Prob$ depends on $\lambda$ and $p$ even though both
parameters do not figure in the notation.
For $\lambda > 0$, under $\Prob$,
the walk $(Y_n)_{n \in \N_0}$ is transient and there exists a critical value
$\lambdacrit$ for the bias such that
$X_n \defeq \x(Y_n)$ has positive linear speed if $\lambda < \lambdacrit$,
and zero linear speed if $\lambda \geq \lambdacrit$.
This comes from the fact that the larger the bias,
the more time the walk needs to leave dead-ends in the direction of the bias.

\begin{proposition}[Proposition 3.1 and Theorem 3.2 in
\cite{Axelson-Fisk+H"aggstr"om:2009b}]	\label{Prop:transience of Y_n}
Fix $\lambda>0$. The walk $(Y_n)_{n\in\N_0}$ is $\Prob$-a.\,s.\ transient,
and $\lim_{n\to\infty} \frac{ X_n }{n} = \vel(\lambda)$ $\Prob$-a.\,s.\ with
\begin{equation*}
\vel(\lambda)	=	\begin{cases}
				> 0	& \text{for } \lambda \in (0, \lambdacrit),	\\
				= 0	& \text{for } \lambda \geq \lambdacrit
				\end{cases}
\end{equation*}
where $\lambdacrit = \frac{1}{2} \log \big( 2/\big(
1+2p-2p^2-\sqrt{1+4p^2-8p^3+4p^4 }\big) \big)$.
\end{proposition}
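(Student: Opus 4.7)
The plan is to build a regeneration structure for $(Y_n)$ and read off from it transience, the law of large numbers, and the location of the phase transition. First, identify a random sequence of \emph{regeneration columns} $L_1 < L_2 < \ldots$ in $\Z$ with the property that upon first reaching $L_k$ the walk never revisits column $L_k-1$. These can be produced by specifying a two-column ``gate'' pattern that makes the horizontal crossing edge pivotal; such gates occur with positive $\Prmp$-probability on every column, and the strictly positive horizontal drift $\lambda>0$ forces infinitely many of them to become regenerative. The strong Markov property together with the i.i.d.\ structure of $\Prmp$ away from a gate shows that the increments
\[
(\tau_{k+1}-\tau_k,\ X_{\tau_{k+1}}-X_{\tau_k})_{k\geq 1}, \qquad \tau_k \defeq \inf\{n : X_n = L_k\},
\]
are i.i.d.\ under $\Prob$, yielding transience and, by Kolmogorov's strong law, $X_n/n \to \vel(\lambda) \defeq \E[X_{\tau_2}-X_{\tau_1}]/\E[\tau_2-\tau_1]$ a.s., with the convention $1/\infty = 0$.

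Since the horizontal extent of one regeneration block has a geometrically decaying tail, $\E[X_{\tau_2}-X_{\tau_1}]$ is finite and positive, so $\vel(\lambda) > 0$ iff $\E[\tau_2-\tau_1] < \infty$. The cost of $\tau_2-\tau_1$ is concentrated in excursions into \emph{traps}, i.e.\ finite dead-end branches of $\Cluster$ oriented against the bias. A trap of horizontal depth $k$ behaves under $P_{\omega,\lambda}$ like a biased random walk on $\{0,\ldots,k\}$ with drift ratio $e^{-2\lambda}$ toward the mouth, so gambler's ruin gives an expected sojourn time of order $e^{2\lambda k}$ per visit. Meanwhile, the depth $D$ of the trap attached to a fixed column satisfies $\Prmp(D \geq k) \asymp \rho(p)^k$ for some $\rho(p) \in (0,1)$. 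Combining these two ingredients, $\E[\tau_2-\tau_1] < \infty$ iff $e^{2\lambda}\rho(p) < 1$, i.e.\ iff $\lambda < \lambdacrit \defeq -\tfrac12\log\rho(p)$.

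To extract the explicit value of $\rho(p)$ I would encode the infinite cluster column by column as a Markov chain on a small state space, whose states record (up to the obvious symmetry) whether each of the two vertices in the current column lies on the backbone or inside an as-yet-unterminated trap, together with the local pattern of vertical and horizontal bonds. The tail $\Prmp(D \geq k)$ is then controlled by the $k$-th power of a substochastic transition matrix acting on the two-dimensional ``trap still alive'' sector, and $\rho(p)$ is its dominant eigenvalue. Solving the resulting characteristic polynomial (whose trace is $1+2p-2p^2$ and whose determinant works out to $p(1-p)$) produces $\rho(p) = \tfrac12\bigl(1+2p-2p^2-\sqrt{1+4p^2-8p^3+4p^4}\bigr)$, which rearranges to the claimed formula for $\lambdacrit$.

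The main obstacle is turning the heuristic trap estimates into matching two-sided bounds. The upper bound $\E[\tau_2-\tau_1] < \infty$ for $\lambda < \lambdacrit$ follows by summing expected escape times across independent traps inside a regeneration block, but the lower bound $\E[\tau_2-\tau_1] = \infty$ for $\lambda \geq \lambdacrit$ requires retaining, with positive probability per block, an arbitrarily deep trap whose single escape time already dominates. Handling the distortion of the local law near $\bfnull$ induced by the conditioning on $\bfnull \in \Cluster$ is a further nuisance that must be dealt with before the bulk transfer-matrix analysis applies cleanly.
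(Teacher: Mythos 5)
You are proving a statement the paper itself does not prove: Proposition \ref{Prop:transience of Y_n} is imported verbatim from Axelson-Fisk and H\"aggstr\"om, so the only comparison available is with that source and with the regeneration framework the present paper builds on top of it. Your skeleton (regeneration blocks, speed $=\E[X_{\tau_2}-X_{\tau_1}]/\E[\tau_2-\tau_1]$ with the $1/\infty=0$ convention, positivity of the speed governed by whether $e^{2\lambda}\rho(p)<1$, traps of depth $k$ costing $e^{2\lambda k}$ per visit) is indeed the strategy used in the literature and is sound at the level of a sketch. However, two steps as written have genuine gaps. First, there is a circularity at the start: you use the regeneration columns to ``yield transience'', but to know that infinitely many gate columns become regenerative you already need directional transience, i.e.\ a uniformly positive escape probability; asserting that ``the strictly positive horizontal drift forces infinitely many of them to become regenerative'' is precisely what must be proven. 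The standard route -- AFH's Proposition 3.1, reused in this paper for the pruned walk in Lemma \ref{Lem:transience pruned walk} and \eqref{eqn:p_esc^p} -- is an electrical-network argument: push a unit flow along an infinite non-backtracking open path, bound its energy by a convergent geometric series (possible for every $\lambda>0$), and deduce finite effective resistance to $+\infty$ and a uniform lower bound on escape probabilities; the regeneration structure is then built on that bound.

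Second, the explicit identification of $\rho(p)$ is internally inconsistent. A nonnegative substochastic matrix has spectral radius at most $1$, yet the characteristic polynomial you propose, $x^2-(1+2p-2p^2)x+p(1-p)$, has larger root $\tfrac12\big(1+2p-2p^2+\sqrt{1+4p^2-8p^3+4p^4}\big)$, which exceeds $1$ for every $p\in(0,1)$ (about $1.31$ at $p=\tfrac12$); the value you claim for $\rho(p)$ is the \emph{smaller} root, so it cannot be ``the dominant eigenvalue'' of the matrix you describe, and the derivation as stated would fail. The discrepancy is not cosmetic: under the unconditioned product measure the per-column survival rate of a trap would be $p^2(1-p)$, which is not $e^{-2\lambdacrit}$; the correct geometric rate of the trap length only emerges under the conditioned, cycle-stationary law $\Prmp^*$, via its Markov-chain (Doob-transform) description from Axelson-Fisk and H\"aggstr\"om -- this is exactly the content of Lemma \ref{Lem:trap length distribution}(a), which your sketch implicitly presupposes rather than establishes. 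The remaining ingredients -- $\E[X_{\tau_2}-X_{\tau_1}]<\infty$ from exponential tails of block widths, divergence of $\E[\tau_2-\tau_1]$ at $\lambda=\lambdacrit$ because $e^{2\lambda}\rho(p)=1$ there, and the renewal law of large numbers -- are fine, but the two points above are where the actual work of the cited proof lies.
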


Existence of a critical value for the bias has been proven in similar
models, e.\,g.\ in \cite{Lyons+Pemantle+Peres:1996}
for biased random walks on Galton-Watson trees
and in \cite{Fribergh+Hammond:2014} for biased random walk on the
supercritical percolation cluster in $\Z^d$.
In the present setting, $\lambdacrit$ is given as an elementary function of $p$.

\subsection{Main results}

The main results of this paper concern the speed of biased random walk in
the sub-ballistic regime.
If the bias is critical ($\lambda = \lambdacrit$),
$X_n$ is of order $n/\log n$.
This is in alignment with simulation results for biased random walk on the
infinite cluster
of supercritical bond percolation in $\Z^d$ in \cite{Dhar+Stauffer:1998}.

\begin{theorem}	\label{Thm:critical speed}
In the case $\lambda=\lambdacrit$, there exist constants $0< a < b <
\infty$ such that
\begin{equation*}
\lim_{n \to \infty} \Prob \big(\tfrac{X_n}{n/\log n} \in [a,b] \big) = 1.
\end{equation*}
\end{theorem}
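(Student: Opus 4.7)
The strategy is to exploit the regeneration structure for $(Y_n)$ developed in \cite{Gantert+al:2018} and to read off the order of $X_n$ from sharp tail asymptotics for the first regeneration time $\tau_1$ at $\lambda = \lambdacrit$.

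Recall from \cite{Gantert+al:2018} the sequence $0 = \tau_0 < \tau_1 < \tau_2 < \ldots$ of regeneration times at which $X_n$ jumps to a new record on the $\x$-axis and never returns. Under $\Prob$, the increments $(\tau_{k+1}-\tau_k, X_{\tau_{k+1}}-X_{\tau_k})_{k \geq 1}$ are i.i.d., and the spatial increments $D_k \defeq X_{\tau_{k+1}}-X_{\tau_k}$ have exponential tails, so $\mu_X \defeq \E[D_1] \in (0,\infty)$. Given matching tail estimates
\begin{equation*}
\frac{c_1}{t} \,\leq\, \Prob(\tau_1 > t) \,\leq\, \frac{c_2}{t}, \qquad t \geq 2,
\end{equation*}
with $0<c_1\leq c_2 <\infty$, the theorem follows by standard renewal arguments: positive i.i.d.\ random variables with tails of order $1/t$ have partial sums $S_N = \tau_1+\ldots+\tau_N$ satisfying $S_N/(N \log N) \to c$ in probability (Feller-type truncation at level $N\log N$), so the number $N_n = \#\{k : \tau_k \leq n\}$ of regenerations up to time $n$ is concentrated around a constant multiple of $n/\log n$. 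Combined with the strong law of large numbers for the $D_k$, this yields $X_n \sim \mu_X N_n \asymp n/\log n$ with the required constants $a,b$, after controlling the overshoot $X_n - X_{\tau_{N_n}}$ by the elementary estimate $|X_n-X_{\tau_{N_n}}| \leq D_{N_n+1}$ and a moment bound.

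The heart of the proof is therefore the tail estimate on $\tau_1$. Large values of $\tau_1$ arise from the time the walk spends in finger-like dead-ends of $\Cluster$ pointing against the drift. Axelson-Fisk--Häggström's description of $\Prmp$ implies that a finger of depth $\ell$ appears with probability of order $r^\ell$ for some $r = r(p) \in (0,1)$, while inside such a finger $(Y_n)$ is a biased birth-death chain with drift $2\lambda$ against the exit and hence requires a time of order $e^{2\lambda \ell}$ to escape. The closed form of $\lambdacrit$ in Proposition~\ref{Prop:transience of Y_n} is exactly the value for which $e^{2\lambdacrit} = 1/r$, so the contributions of fingers of depth $\ell$ to $\Prob(\tau_1 > t)$ balance precisely to a harmonic tail. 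The upper bound can be obtained via a union bound over the finitely many fingers visited before $\tau_1$ together with standard hitting-time estimates for biased birth-death chains; the lower bound follows by exhibiting, with probability of order $r^\ell$, a finger deep enough that the walk provably spends at least $e^{2\lambdacrit \ell}$ steps inside it.

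The main obstacle is obtaining these bounds with matching powers \emph{at} $\lambda = \lambdacrit$. For $\lambda \neq \lambdacrit$ the same strategy yields $\Prob(\tau_1 > t) \asymp t^{-\gamma(\lambda)}$ with $\gamma(\lambda) = \log(1/r)/(2\lambda) \neq 1$, and the usual sub- or super-exponential slack absorbs imprecise constants in the finger geometry. At the borderline exponent $\gamma(\lambdacrit)=1$ no such slack is available, so one needs a genuinely two-sided description of the joint law of the depth of a finger and of the actual sojourn time of the walk inside it, rather than the crude polynomial comparisons that suffice off-criticality. This is where the sharp analysis of the regeneration times is concentrated; once this delicate step is in place, the passage to $X_n$ is routine.
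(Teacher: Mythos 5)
Your reduction of the theorem to a sharp two-sided tail estimate for the regeneration structure is essentially the route the paper takes in Section \ref{sec:proofs of main results}: sandwich $X_n$ between $\rho_{k(n)}$ and $\rho_{\nu(n)}$, control the renewal counting process, and use the law of large numbers for the spatial increments, whose exponential moments (Lemma \ref{Lemma:known results rho and tau}(b)) also handle the overshoot. Two caveats on this part. First, what the renewal argument needs is the tail of the i.i.d.\ increment $\tau_2-\tau_1$, not of $\tau_1$: in fact the paper's Proposition \ref{Prop:tail estimate} only gives $\Prob(\tau_1\geq n)\leq d\,n^{-1}\log n$ at criticality, which is harmless for the delay term but shows the distinction matters. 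Second, your claim that $S_N/(N\log N)$ converges in probability to a constant requires an asymptotically exact (regularly varying) tail $\Prob(\tau_2-\tau_1>t)\sim c/t$; the available input is only $c_1/t\leq\Prob(\tau_2-\tau_1>t)\leq c_2/t$ with possibly $c_1<c_2$, and regular variation is not expected to hold here (see the remark after Theorem \ref{Thm:noncritical regimes}). This is repairable exactly as the paper does it: dominate the increments from above and below by i.i.d.\ variables with exactly asymptotic tails $\sim c\,t^{-1}$ and $\sim d\,t^{-1}$ and use that at $\alpha=1$ the corresponding first-passage processes converge in probability to (different) constants, which yields the interval $[a,b]$ rather than a single constant --- and the interval is all the theorem asserts.

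The genuine gap is the tail estimate itself, which you correctly identify as the heart of the matter but do not prove. The lower bound is indeed the easy half (Lemma \ref{Lem:tail estimate lower bound} does what you sketch), but for the upper bound your proposal --- ``union bound over the finitely many fingers visited before $\tau_1$ together with standard hitting-time estimates'' --- passes over precisely the points where the borderline exponent bites. (i) The number of traps encountered before regeneration and the number of excursions into each trap are not independent of the trap depths, so the union bound cannot simply be summed against an annealed single-trap tail; the paper resolves this with the coupling of Section \ref{section:coupling}, replacing $(Y_n)_{n\in\N_0}$ by a pruned walk on the trap-free graph whose visit counts dominate those of the original walk, are independent of the trap lengths, and whose regeneration quantities have moments of all orders (Lemma \ref{Lem:moment estimate for tau_1^p}). (ii) One then needs the single-trap annealed bound of \emph{exact} order $n^{-\alpha}$, uniformly in the number $l$ of excursions with explicit polynomial dependence (the $l^{\alpha+1}$ in Lemma \ref{Lem:annealed tail estimate for single trap}); at $\alpha=1$ there is no slack to absorb losses, and the paper obtains this via residue calculus, which is exactly the ``delicate step'' you defer. (iii) The trap containing the origin has a size-biased length law (Lemma \ref{Lem:trap length distribution}(b)) and produces the extra $\log n$ in the tail of $\tau_1$, which is why the paper works with $\tau_2-\tau_1$; and the backbone time must be shown negligible (Lemma \ref{Lem:backbone time}). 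Since none of these points is addressed, the central estimate $\Prob(\tau_2-\tau_1\geq n)\asymp n^{-1}$ at $\lambda=\lambdacrit$ remains unproved in your proposal, and with it the theorem.
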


We prove this theorem from fine estimates for the tails of suitable
regeneration times to be introduced below.
Less accurate estimates for the tails of these regeneration times derived in
\cite{Gantert+al:2018}
revealed a second phase transition at $\lambda = \lambdacrit/2$, namely,
a central limit theorem for $(X_n)_{n \in \N_0}$ with square-root scaling
holds
if and only if $\lambda < \lambdacrit/2$,
see \cite[Theorem 2.6]{Gantert+al:2018}.
Our tail estimates also give control over the fluctuations of $(X_n)_{n \in \N_0}$
in the remaining parameter range $\lambda \in [\lambdacrit/2,\infty)$.

Throughout the paper, we write
\begin{equation*}
\alpha	\defeq	\lambdacrit/\lambda.
\end{equation*}

\begin{theorem}	\label{Thm:noncritical regimes}
Suppose that $\lambda \geq \lambdacrit/2$, $\lambda \neq \lambdacrit$.
\begin{enumerate}[(a)]\itemsep1pt
	\item
		Let $\lambda = \lambdacrit/2$, i.e., $\alpha = 2$.
		Then the laws of $\big(\frac{X_n - n \vel}{\sqrt{n \log n}}\big)_{n \geq 2}$ under $\Prob$ are tight.
	\item
		Let $\lambda \in (\frac{\lambdacrit}{2}, \lambdacrit)$, i.e., $\alpha \in (1,2)$.
		Then the laws of $\big(\frac{X_n - n \vel}{n^{1/\alpha}}\big)_{n\in\N}$ under $\Prob$ are tight.
	\item
		Let $\lambda > \lambdacrit$, i.e., $\alpha \in (0,1)$.
		Then the laws of $\big(\frac{X_n}{n^\alpha}\big)_{n\in\N}$ under $\Prob$ are tight.
\end{enumerate}
\end{theorem}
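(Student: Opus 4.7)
The plan is to reduce all three statements to fine tail estimates for the regeneration times $(\tau_k)_{k \geq 1}$ introduced in \cite{Gantert+al:2018}. Writing $S_k \defeq \tau_k - \tau_{k-1}$ and $D_k \defeq X_{\tau_k} - X_{\tau_{k-1}}$, standard regeneration theory yields that under $\Prob$ the pairs $(S_k,D_k)_{k \geq 2}$ are i.i.d.\ and that $D_2$ has exponential tails, hence finite moments of all orders. The analytic heart of the paper should be a sharp estimate of the form
\begin{equation*}
\Prob(S_2 > t) \asymp t^{-\alpha} L(t),
\end{equation*}
with a slowly varying $L$ that can be taken constant for $\alpha \neq 2$. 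Granting such tail estimates, the three regimes follow from classical heavy-tailed limit theorems for i.i.d.\ sums, combined with an inversion relating $X_n$ to the number of completed regenerations $K(n) \defeq \sup\{k : \tau_k \leq n\}$, for which $\tau_{K(n)} \leq n < \tau_{K(n)+1}$.

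For part (c) ($\alpha \in (0,1)$), $S_2$ is in the domain of attraction of a positive $\alpha$-stable law, so $(k^{-1/\alpha} \tau_k)_{k \geq 1}$ is tight and bounded away from $0$ in probability. Inverting this relation gives that $(n^{-\alpha} K(n))_{n \geq 1}$ is tight and bounded away from $0$. Since $X_{\tau_k}/k \to \E[D_2] \in (0,\infty)$ almost surely by the strong law, and $X_{\tau_{K(n)}} \leq X_n \leq X_{\tau_{K(n)+1}}$, tightness of $(X_n/n^\alpha)$ follows immediately.

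For the ballistic parts (a) and (b), $\E[S_2] < \infty$ and $X_n / n \to \vel$ with $\vel = \E[D_2]/\E[S_2]$. I would decompose
\begin{equation*}
X_n - n \vel = \bigl( X_{\tau_{K(n)}} - \vel\, \tau_{K(n)} \bigr) \,-\, \vel\,\bigl(n - \tau_{K(n)}\bigr) \,+\, \bigl(X_n - X_{\tau_{K(n)}}\bigr).
\end{equation*}
The last summand is dominated by $D_{K(n)+1}$, which has all moments and is therefore negligible on the relevant scales. The first summand is a sum of $K(n) \sim n/\E[S_2]$ i.i.d.\ centred random variables $D_k - \vel S_k$ whose heavy tails are inherited from the $S_k$. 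In case (b) these variables lie in the domain of attraction of an $\alpha$-stable law with scaling $m^{1/\alpha}$, so the first summand is of order $n^{1/\alpha}$; the middle term is bounded by $\vel\, S_{K(n)+1}$, which is again of order $n^{1/\alpha}$ by tightness of a single heavy-tailed summand at the stable scale. In case (a), $\alpha = 2$ and the tail $\Prob(S_2 > t) \asymp t^{-2} L(t)$ yields $\E[(S_2 \wedge t)^2] \asymp \log t$; Feller's criterion then places $D_k - \vel S_k$ in the non-normal domain of attraction of the Gaussian with scaling $\sqrt{m \log m}$, producing the $\sqrt{n \log n}$ normalisation.

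I expect the main obstacle to be the tail estimate for $S_2$ itself. A one-sided bound of the form $\Prob(S_2 > t) \leq C t^{-\alpha} (\log t)^C$ suffices for tightness of the scaled sums from above; to rule out smaller normalisations one needs a matching lower bound, which at $\alpha = 2$ requires enough control on the slowly varying factor $L$ to ensure $\E[(S_2 \wedge t)^2]$ really grows like $\log t$ rather than more slowly. Once the tail asymptotics are secured, the rest of the argument is bookkeeping: the strong law for $X_{\tau_k}$, the stable (resp.\ non-normal) CLT for heavy-tailed sums, the inversion between $\tau_k$ and $K(n)$, and the sandwich estimate for the incomplete last regeneration block.
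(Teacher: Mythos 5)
Your overall skeleton (regeneration structure, sandwich between $\rho_{k(n)}$ and $\rho_{\nu(n)}$, reduction to tail estimates for $\tau_2-\tau_1$) matches the paper's strategy, but there is a genuine gap in the probabilistic core: you repeatedly invoke domain-of-attraction statements — ``$S_2$ is in the domain of attraction of a positive $\alpha$-stable law'' in (c), ``$D_k-\vel S_k$ lie in the domain of attraction of an $\alpha$-stable law'' in (b), and ``Feller's criterion then places $D_k-\vel S_k$ in the non-normal domain of attraction of the Gaussian'' in (a). All of these require \emph{regularly varying} tails, i.e.\ an asymptotic equivalence $\Prob(\tau_2-\tau_1>t)\sim t^{-\alpha}L(t)$. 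The paper's Proposition \ref{Prop:tail estimate} delivers only two-sided bounds $c\,n^{-\alpha}\leq\Prob(\tau_2-\tau_1\geq n)\leq d\,n^{-\alpha}$, and the authors explicitly state that regular variation is \emph{not} expected (this is precisely why they claim tightness rather than convergence in distribution). So the step ``the three regimes follow from classical heavy-tailed limit theorems'' does not follow from the available input; your own caveat at the end (``a one-sided bound $\leq Ct^{-\alpha}(\log t)^C$ suffices'') is also too optimistic in case (a), where any extra logarithmic factor would ruin the $\sqrt{n\log n}$ normalisation, and in case (c) the \emph{lower} tail bound is needed for the tightness statement itself (to keep $\nu(n)/n^{\alpha}$ from exploding), not merely to rule out smaller normalisations.

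The paper gets around the missing regular variation differently. For $\alpha\in(0,1]$ it sandwiches the increments between i.i.d.\ sequences $\eta_k\distleq\tau_k-\tau_{k-1}\distleq\xi_k$ whose tails are \emph{constructed} to be exactly $\sim c n^{-\alpha}$ resp.\ $\sim d n^{-\alpha}$, applies Bingham's first-passage limit theorem to these auxiliary renewal processes, and transfers tightness of $\nu(n)/a_n$ by stochastic monotonicity. For $\alpha\in(1,2]$ it does not touch the centred increments $D_k-\vel S_k$ at all: since $\rho_2-\rho_1$ has exponential moments, $(\rho_{\nu(n)}-\nu(n)\E[\rho_2-\rho_1])/a_n\to0$ by Marcinkiewicz--Zygmund/Gut-type results, so all the heavy-tailed fluctuation is pushed into $\nu(n)-n/\E[\tau_2-\tau_1]$, whose uniform integrability at scale $a_n$ is proved by a Laplace-transform argument (Proposition \ref{Prop:uniform integrability of renewal counting process}) that needs \emph{only} the upper bound $\Prob(\xi_2>t)\leq d t^{-\alpha}$. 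Your argument could in principle be repaired by replacing the stable CLTs with truncation estimates, but as written the key steps are unsupported. Two smaller points: the straddling block $S_{K(n)+1}$ is size-biased (inspection paradox), so it is not ``a single heavy-tailed summand'' distributed as $S_2$ — its tightness needs a separate (easy) renewal argument; and the first regeneration block $(\tau_1,\rho_1)$ has a different, heavier-tailed law (extra $\log$ factor in Proposition \ref{Prop:tail estimate}) and should be treated separately, as the paper does when allowing $\xi_1$ a different distribution.
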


In all three cases covered by Theorem \ref{Thm:noncritical regimes},
we do not expect that tightness can be strengthened to convergence in
distribution
due to a lack of regular variation of the tails of the regeneration times,
see Lemma \ref{Lem:annealed tail estimate for single trap} and the proof thereof.
Instead, we expect only convergence along certain subsequences as found for biased
random walk on Galton-Watson trees, cf.\ \cite{BenArous+Fribergh+Gantert+Hammond:2012}.
We refrain from further investigating this phenomenon,
as our main goal in this paper is to derive the speed of biased random
walk at the critical bias.

We continue with an overview of the organization of the paper.
In Section \ref{sec:regeneration}, we introduce regeneration points and times that go back to \cite{Gantert+al:2018}.
We review known results about the regeneration points and times and state our main technical result, Proposition \ref{Prop:tail estimate},
which provides the precise order of the tails of the regeneration times.
Based on these tail bounds, we prove the main results in Section \ref{sec:proofs of main results}.
Section \ref{sec:tail bounds} is devoted to the proof of Proposition \ref{Prop:tail estimate}.
Finally, in Appendix \ref{sec:renewal theory}, we provide an auxiliary result from renewal theory.

\section{Regeneration points and times}	\label{sec:regeneration}

We use the decomposition of the percolation cluster at regeneration points
from \cite{Gantert+al:2018}.
Regeneration points are defined in two steps.
Given a configuration $\omega \in \Omega$,
a vertex $v = (\x(v),0) \in V$ is called a \emph{pre-regeneration point}
if $v \in \Cluster$ and $(\x(v),1)$ is an isolated vertex in $\omega$,
that is,
all three edges adjacent to $(\x(v),1)$ are closed in $\omega$.

\begin{lemma}[Lemma 5.1 and Corollary 5.2 in
\cite{Axelson-Fisk+H"aggstr"om:2009b}]	\label{Lem:number of regeneration
points}
With $\Prmp$-probability one, there exist infinitely many pre-regeneration
points both left and right of the origin.
\end{lemma}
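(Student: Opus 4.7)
The plan is to derive the statement from positivity of the density of pre-regeneration points under the translation-invariant measure $\Prmp^*$ combined with its ergodicity, and then to transfer the resulting $\Prmp^*$-a.\,s.\ conclusion to $\Prmp$. Write $\theta$ for the unit horizontal shift on $\Omega$ and, for $x\in\Z$, let $A_x$ denote the event that $(x,0) \in \Cluster$ and all three edges incident to $(x,1)$ are closed. By the translation invariance of $\Prmp^*$ stated in the first Proposition, $\Prmp^*(A_x) = \Prmp^*(A_0)$ for every $x$.

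First I would show $\Prmp^*(A_0)>0$. Under i.i.d.\ percolation with parameter $p$, the three edges incident to $(0,1)$ are independent of all other edges, each being closed with probability $1-p$. Under $\Prm_{p,N_1,N_2}$, the conditioning event $\Omega_{N_1,N_2}$ can be met by an open path through the lower rail passing through $\bfnull$, which avoids these three edges; conditioning on the remaining edges and using FKG to handle the extra requirement therefore yields a uniform bound
\begin{equation*}
\Prm_{p,N_1,N_2}(A_0) \geq c(1-p)^3
\end{equation*}
with $c>0$ independent of $N_1, N_2$. Passing to the weak limit gives $\Prmp^*(A_0) \geq c(1-p)^3 > 0$.

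Next I would invoke the ergodicity of $\Prmp^*$ with respect to $\theta$. Granted this, Birkhoff's theorem applied to $\1_{A_0}$ yields
\begin{equation*}
\frac{1}{N}\sum_{x=0}^{N-1} \1_{A_x}(\omega) \longrightarrow \Prmp^*(A_0) > 0 \qquad \Prmp^*\text{-a.\,s.},
\end{equation*}
so infinitely many $x \geq 0$ satisfy $\omega \in A_x$, i.e., there are infinitely many pre-regeneration points to the right of $\bfnull$ almost surely under $\Prmp^*$. The analogous statement to the left follows from the reflection symmetry $\x \mapsto -\x$ of the model (or by applying Birkhoff to $\theta^{-1}$). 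Finally, since $\Prmp = \Prmp^*(\,\cdot\mid \bfnull \in \Cluster)$ and $\Prmp^*(\bfnull \in \Cluster) > 0$ is implicit in the definition of $\Prmp$, any $\Prmp^*$-a.\,s.\ statement is also $\Prmp$-a.\,s.

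The main obstacle is the ergodicity of $\Prmp^*$: positivity of $\Prmp^*(A_0)$ is essentially a rerouting computation, but $\Prmp^*$ is not a product measure and ergodicity is more delicate. I would handle it as in \cite{Axelson-Fisk+H"aggstr"om:2009} by exhibiting $\Prmp^*$ as a stationary renewal process built on its pre-regeneration points, which realises $\Prmp^*$ as a factor of a product measure and thereby makes it mixing, and in particular ergodic.
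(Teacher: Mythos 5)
The paper does not prove this lemma at all: it is imported verbatim from Lemma 5.1 and Corollary 5.2 of \cite{Axelson-Fisk+H"aggstr"om:2009b}, so there is no in-paper proof to compare against. Your skeleton (positive $\Prmp^*$-probability of the local configuration defining a pre-regeneration point, translation invariance plus ergodicity and Birkhoff to get infinitely many such points on both sides, then transfer from $\Prmp^*$ to $\Prmp$ by conditioning on the positive-probability event $\{\bfnull\in\Cluster\}$) is the standard route and is in the spirit of the cited source. However, two of your steps have genuine gaps as written.

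First, the positivity step: FKG cannot deliver the uniform-in-$(N_1,N_2)$ bound you claim, because $A_0$ is not monotone. The isolation of $(0,1)$ is a decreasing event while the conditioning event $\Omega_{N_1,N_2}$ is increasing, so Harris/FKG gives $\Prm_{p,N_1,N_2}(\text{three edges closed}) \leq (1-p)^3$, i.e.\ the wrong direction; and for the increasing part ``$(0,0)$ connected to both boundaries'' FKG only bounds the conditional probability below by the unconditional one, which tends to $0$ as $N_1,N_2\to\infty$, so no uniform constant $c$ comes out. What is actually needed is a local-modification (finite-energy, bounded-to-one surgery) argument near the origin, or, cleaner, the explicit Markovian description of $\Prm_{p,N_1,N_2}$ and $\Prmp^*$ in the $\x$-direction from \cite{Axelson-Fisk+H"aggstr"om:2009}, which yields $\Prmp^*(A_0)>0$ immediately. (Also, $A_0$ involves membership in $\Cluster$ and hence is not a cylinder event, so passing to the weak limit requires a truncation such as ``$(0,0)$ connected within $[-M,M]$'' followed by $M\to\infty$.) Second, and more seriously, your proposed ergodicity argument is circular: exhibiting $\Prmp^*$ as a stationary renewal process built over its pre-regeneration points presupposes that these points exist a.s.\ with positive density, which is (more than) the statement being proved. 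The non-circular input is that \cite{Axelson-Fisk+H"aggstr"om:2009} establishes $\Prmp^*$ as a stationary, irreducible Markov chain in the $\x$-direction, hence mixing and in particular ergodic; with that substituted, your Birkhoff argument and the transfer to $\Prmp$ go through as you describe.
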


We enumerate the pre-regeneration points in $\omega$ by
$\ldots, R^{\mathrm{pre}}_{-1}, R^{\mathrm{pre}}_0,
R^{\mathrm{pre}}_1,\ldots$
such that $\x(R^{\mathrm{pre}}_{-1}) < 0 \leq \x(R^{\mathrm{pre}}_0)$ and
$\x(R^{\mathrm{pre}}_n) <  \x(R^{\mathrm{pre}}_{n+1})$ for all $n \in \Z$.

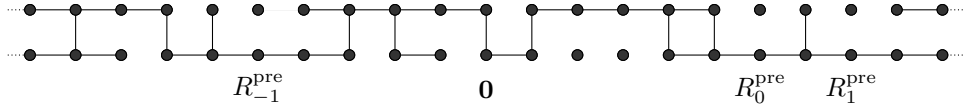
\begin{figure}[htbp]
\begin{center}
\pgfmathsetseed{845133}
\begin{tikzpicture}[thin, scale=0.6,-,
                   shorten >=0pt+0.5*\pgflinewidth,
                   shorten <=0pt+0.5*\pgflinewidth,
                   every node/.style={circle,
                                      draw,
                                      fill          = black!80,
                                      inner sep     = 0pt,
                                      minimum width =4 pt}]

\def \p {0.5}

\foreach \x in {-10,-9,-8,-7,-6,-5,-4,-3,-2,-1,0,1,2,3,4,5,6,7,8,9,10}
\foreach \y in {0,1}
    \node at (\x,\y) {};

\foreach \x in {-10,-9,-8,-7,-6,-5,-4,-3,-2,-1,0,1,2,3,4,5,6,7,8,9}{
\foreach \y in {0,1}{
    \pgfmathparse{rnd}
    \let\dummynum=\pgfmathresult
    \ifdim\pgfmathresult pt < \p pt\relax \draw (\x,\y) -- (\x+1,\y);\fi
  }}

\foreach \x in {-10,-9,-8,-7,-6,-5,-4,-3,-2,-1,0,1,2,3,4,5,6,7,8,9,10}{
\foreach \y in {0}{
    \pgfmathparse{rnd}
    \let\dummynum=\pgfmathresult
    \ifdim\pgfmathresult pt < \p pt\relax \draw (\x,\y) -- (\x,\y+1);\fi
  }}

	\draw[densely dotted] (-10.5,1) -- (-10,1);
	\draw[densely dotted] (-10.5,0) -- (-10,0);
	\draw (-8,1) -- (-7,1);
	\draw (-3,1) -- (-2,1);
	\draw (1,1) -- (2,1);
	\draw (6,0) -- (7,0);
	\draw (9,0) -- (10,0);
	\draw[densely dotted] (10,0) -- (10.5,0);
	\draw[densely dotted] (10,1) -- (10.5,1);
	\node[draw=none,fill=none] at (0,-0.75) {$\mathbf{0}$};
	\node[draw=none,fill=none] at (6,-0.75) {$R^{\mathrm{pre}}_{0}$};
	\node[draw=none,fill=none] at (8,-0.75) {$R^{\mathrm{pre}}_{1}$};

	\draw (9,1) -- (10,1);
	\draw[white, thick] (-5,1) -- (-4,1);
	\foreach \x in {-10,-9,-8,-7,-6,-5,-4,-3,-2,-1,0,1,2,3,4,5,6,7,8,9,10}
\foreach \y in {0,1}
    \node at (\x,\y) {};
    \node[draw=none,fill=none] at (-5,-0.75) {$R^{\mathrm{pre}}_{-1}$};
\end{tikzpicture}
\caption{Pre-regeneration points close to the origin}
\end{center}
\end{figure}
The pre-regeneration points can be used to decompose the percolation
cluster into independent pieces.
For $a,b \in \Z$,
we denote the subgraph of $\omega$ with vertex set $V_{[a,b)} \defeq \{v
\in V: a \leq \x(v) \leq b\}$
and edge set $E_{[a,b)} \defeq \{e=\<u,v\> \in E:\, u,v \in V_{[a,b)},\,
\x(u) \wedge \x(v) < b,\, \omega(e)=1\}$
by $[a,b)$ and call $[a,b)$ a \emph{piece} or \emph{block} (of $\omega$).
We then define
\begin{center}
$\omega_n \defeq [\x(R_{n-1}^{\mathrm{pre}}),
\x(R_n^{\mathrm{pre}})),	\quad	n \in \Z.$
\end{center}
Using this definition, we may introduce the cycle-stationary percolation
law $\Prmp^\circ$.

\begin{definition}	\label{def:cycle-stationary percolation law}
The \emph{cycle-stationary percolation law} $\Prmp^\circ$
is defined to be the unique probability measure on $(\Omega,\F)$ such that
the cycles $\omega_n$, $n \in \Z$ are i.i.d.\ under $\Prmp^\circ$
with each $\omega_n$ having the same law under $\Prmp^\circ$
as $\omega_1$ under $\Prmp^*$, and such that $R^{\mathrm{pre}}_0=\bfnull$.
\end{definition}

We write $\Prob^\circ$ for the annealed law of the biased random walk and
the percolation configuration
when the latter is drawn using $\Prmp^\circ$ instead of $\Prmp$.
To be more precise, $\Prob^\circ$ is defined as $\Prob$ in
\eqref{eq:Prob}, but with $\Prmp$ replaced by $\Prmp^\circ$.

\begin{definition} \label{def:regeneration point}
We call a $v \in V$ with $\x(v) \geq 0$ \emph{regeneration point} if
\begin{enumerate}[1.]	\itemsep0pt
	\item	it is a pre-regeneration point and
	\item	the random walk $(Y_n)_{n \in \N_0}$ visits $v$ exactly once.
\end{enumerate}
\end{definition}

It follows from the discussion in Section 4 of \cite{Gantert+al:2018} that
there are infinitely many regeneration points to the right of $\bfnull$.
We set $R_0 \defeq \bfnull$ and, for $n \in \N$, define $R_n$ to be the
first regeneration point to the right of $R_{n-1}$.
Thus, $\rho_{n-1} < \rho_{n}$ for all $n \in \N$
where $\rho_n \defeq \x(R_n)$, $n \in \N_0$.
Furthermore, let $\tau_0 \defeq 0$ and
\begin{equation*}
\tau_n \defeq \inf \{k\in\N_0: Y_k = R_n \}, 	\qquad	n \in \N.
\end{equation*}
For $n \geq 1$, $\tau_n$ is the unique time at which the $n$th
regeneration point $R_n$
is visited by the walk $(Y_k)_{k \in \N_0}$.
In particular, $0 = \tau_0 < \tau_1 < \ldots$\,.
We call $\tau_n$ the $n$th \emph{regeneration time}.
The following assertions are known from \cite{Gantert+al:2018}
about the regeneration times and points.

\begin{lemma}[Lemmas 4.1 and 4.2, Proposition 4.3 in
\cite{Gantert+al:2018}]	\label{Lemma:known results rho and tau}
Fix $\lambda > 0$.
\begin{enumerate}[(a)]	\itemsep1pt
	\item
		Under $\Prob$, the pairs $(\tau_{n+1}\!-\!\tau_n, \rho_{n+1}\!-\!\rho_n
)$, $n\in\N$ are i.i.d.\ and
		independent of $(\tau_1,\rho_1)$,
		and
		\begin{equation*}
		\Prob((\tau_{2}\!-\!\tau_1, \rho_{2}\!-\!\rho_1) \in \cdot) =
\Prob^\circ((\tau_1,\rho_1) \in \cdot | Y_n \not = \bfnull \text{ for
all } n \geq 1).
		\end{equation*}
	\item
		There exists some $\delta>0$ such that $\E[e^{\delta(\rho_2 - \rho_1)}]
< \infty$.
	\item
		It holds that $\E[(\tau_2 - \tau_1)^\kappa] < \infty$ if and only if
$\kappa < \alpha = \lambdacrit/\lambda$.
	\item
		The ballistic speed satisfies $\vel(\lambda) = \E[\rho_2 -
\rho_1]/\E[\tau_2 - \tau_1]$.
\end{enumerate}
\end{lemma}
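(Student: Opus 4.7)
The backbone of the argument is the cycle-stationary decomposition into i.i.d.\ blocks at pre-regeneration points. For part~(a), Definition~\ref{def:regeneration point} guarantees that once the walk passes a regeneration point $R_n$, it never returns to the left of $R_n$, so its future trajectory is determined entirely by the walk restricted to the subgraph of $\omega$ to the right of $R_n$. Under $\Prmp^\circ$ the blocks past any pre-regeneration point are i.i.d.\ copies of $\omega_1$; combining this block-independence with the strong Markov property of $(Y_k)$ yields the i.i.d.\ structure of the regeneration increments. The conditioning event $\{Y_k \neq \bfnull \text{ for all } k \geq 1\}$ on the right-hand side precisely reflects the selection, among pre-regeneration points, of those visited exactly once.

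For part~(b), the pre-regeneration points form a stationary point process with exponentially-tailed interarrival distances (a given vertex is a pre-regeneration candidate with positive probability depending only on finitely many adjacent edges, so the gaps stochastically dominate a geometric). Conditional on being at a pre-regeneration point, the walk fails to return with probability bounded below uniformly, since after crossing the local bottleneck on the backbone, the strict bias $\lambda>0$ makes escape to $+\infty$ occur with positive probability (a standard electrical-network / birth-and-death argument along the $\y=0$ line). Hence the first regeneration pre-point is geometrically indexed, and $\rho_1$ (and by~(a) each $\rho_{n+1}-\rho_n$) is a geometric sum of exponentially-tailed block sizes, hence has exponential tail.

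Part~(c) is the delicate one. The time between consecutive regeneration points is dominated by excursions the walk makes into dead-end branches attached to the backbone. A standard electrical-network computation shows that a dead-end of depth $\ell$ has expected exit time of order $e^{2\lambda \ell}$, while the probability that a given block contains a trap of depth at least $\ell$ decays exponentially in $\ell$ with rate $2\lambdacrit$ --- this being precisely the balance that produces the elementary formula for $\lambdacrit$ in Proposition~\ref{Prop:transience of Y_n}. Matching the two exponentials gives a single-block tail $\Prob(\tau_2-\tau_1>t) \asymp t^{-\alpha}$ up to slowly varying factors, from which $\E[(\tau_2-\tau_1)^\kappa]<\infty$ if and only if $\kappa<\alpha$ follows. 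The main obstacle here is obtaining matching upper and lower bounds: the upper bound requires an annealed estimate on escape times that is uniform in the block geometry, while the lower bound requires isolating a single dominant trap configuration.

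Part~(d) is then a soft consequence of (a) and (b): setting $K_n \defeq \max\{k : \tau_k \leq n\}$, one has $\rho_{K_n} \leq X_n \leq \rho_{K_n+1}$ up to a correction of order $\rho_{K_n+1}-\rho_{K_n}$, so the strong law applied to the i.i.d.\ sequence $(\rho_{k+1}-\rho_k)$ together with the renewal identity $K_n/n \to 1/\E[\tau_2-\tau_1]$ (interpreted as $0$ when $\E[\tau_2-\tau_1]=\infty$) yields $X_n/n \to \E[\rho_2-\rho_1]/\E[\tau_2-\tau_1]$, which must coincide with $\vel(\lambda)$ from Proposition~\ref{Prop:transience of Y_n}.
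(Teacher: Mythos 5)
First, a remark on the comparison: the paper does not prove this lemma at all --- it is imported verbatim from \cite{Gantert+al:2018} (Lemmas 4.1, 4.2 and Proposition 4.3 there), so your attempt has to be measured against the proofs in that reference. Your sketch captures the correct heuristics, but it contains a genuine gap in part (c) and glosses over the one delicate point in part (a). In (a), whether a pre-regeneration point is a regeneration point depends on the \emph{entire future} of the walk, so the $\tau_n$ are not stopping times and the strong Markov property cannot be applied as directly as you do; the i.i.d.\ statement together with the identity $\Prob((\tau_2\!-\!\tau_1,\rho_2\!-\!\rho_1)\in\cdot)=\Prob^\circ((\tau_1,\rho_1)\in\cdot\mid Y_n\neq\bfnull \text{ for all } n\geq1)$ requires the cycle-stationary measure $\Prmp^\circ$ and an iterated conditioning argument of Sznitman--Zerner type, which is precisely the content of Lemma 4.1 in the reference; your sketch names the right conditioning event but does not supply this argument.

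The more serious gap is in (c). The ``only if'' direction does follow from your single-trap lower bound (one deep trap adjacent to a regeneration point occurs with the stated probability, as in Lemma \ref{Lem:tail estimate lower bound}). But the ``if'' direction, $\E[(\tau_2-\tau_1)^\kappa]<\infty$ for every $\kappa<\alpha$, is exactly what you defer as ``the main obstacle'', and matching the exit-time scale $e^{2\lambda\ell}$ against the trap-length tail $e^{-2\lambdacrit\ell}$ only treats a single excursion into a single trap. To control $\tau_2-\tau_1$ one must bound, uniformly in the block geometry, the number of traps in a regeneration block, the number of excursions of the walk into each trap, and the backbone time, and then sum all contributions; this is where essentially all the work lies (in \cite{Gantert+al:2018}, and in sharpened form in Section \ref{sec:tail bounds} of the present paper, via Lemmas \ref{Lem:quenched tail estimate for single trap}--\ref{Lem:annealed tail estimate for single trap} and the pruned-walk coupling of Section \ref{section:coupling}). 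As written, your (c) is a heuristic, not a proof. Two smaller points: in (b) the gaps between pre-regeneration points should be stochastically \emph{dominated by} (not dominate) a variable with exponential tails --- with that correction your geometric-trials argument is the standard one --- and your renewal argument for (d), including the convention $1/\infty=0$ in the zero-speed regime, is sound given (a)--(c).
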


Lemma \ref{Lemma:known results rho and tau}(c) indicates that
$\Prob(\tau_2 - \tau_1 \geq n)$ is roughly of the order $n^{-\alpha}$ as
$n \to \infty$.
We give a more precise statement in the following proposition.

\begin{proposition}	\label{Prop:tail estimate}
For any $\lambda > \log(2)/2$, in particular for $\lambda\geq\lambdacrit/2$, there exist constants $0 < c \leq d <\infty$
(depending on $p$ and $\lambda$)
such that, for all $n \in \N$,
\begin{equation*}
c n^{-\alpha} \leq	\Prob(\tau_2 - \tau_1 \geq n) \leq d n^{-\alpha}.
\end{equation*}
and
\begin{equation*}
c n^{-\alpha} \leq	\Prob(\tau_1 \geq n) \leq d n^{-\alpha} \log n.
\end{equation*}
\end{proposition}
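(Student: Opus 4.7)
My plan is to derive both tail estimates via an analysis of the escape times from \emph{trap} sub-structures of the cluster. A trap of depth $k$ is a maximal forward-pointing dead-end (with respect to the bias direction) into which the walker can get pushed by the bias; escape from it requires traversing $k$ edges against the bias, which by a standard calculation takes a time that is geometric with parameter of order $e^{-2\lambda k}$. Complementarily, under the cycle-stationary measure $\Prmp^\circ$, the probability that a single block $\omega_n$ contains a forward dead-end of depth at least $k$ decays geometrically with rate $e^{-2\lambdacrit k}$; this is precisely where the explicit formula of Proposition \ref{Prop:transience of Y_n} for $\lambdacrit$ enters, obtained from a transfer-matrix analysis of the bi-infinite cluster law.

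With these ingredients in hand, the \emph{one-trap estimate} follows from a standard heavy-tail trade-off: letting $K$ be the trap depth in a single block and $T_K$ the associated escape time, one writes
\begin{equation*}
\Prob(T_K \geq n) \asymp \sum_{k \geq 0} e^{-2\lambdacrit k} \, \bigl(1 - c e^{-2\lambda k}\bigr)^n,
\end{equation*}
and recognizes that the dominant contribution comes from $k \approx \log n /(2\lambda)$, giving $\Prob(T_K \geq n) \asymp n^{-\alpha}$ with $\alpha = \lambdacrit/\lambda$; the sum is only log-periodic rather than regularly varying, matching the remark following Theorem \ref{Thm:noncritical regimes} about the absence of distributional limits. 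To obtain the two-sided bounds on $\tau_2 - \tau_1$, I would use Lemma \ref{Lemma:known results rho and tau}(b) to control the block length by an exponential tail, so that between regenerations only a geometric number of independent traps are encountered; a union bound over them yields the upper bound $d n^{-\alpha}$, while exhibiting with positive probability a single deep trap of depth $\lfloor \log n /(2\lambda) \rfloor$ in some block yields the matching lower bound $c n^{-\alpha}$.

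The bound on $\tau_1$ is more delicate because the walker does not start at a regeneration point: it may visit a random number $N$ of pre-regeneration points before one of them becomes a true regeneration point, and since each pre-regeneration point has a uniformly positive chance of being a regeneration point (by the escape probability of the walk from that point), $N$ has geometric tail. Splitting on $\{N > C \log n\}$ --- whose contribution is at most $n^{-\alpha}$ for $C$ large --- and on its complement, where a union bound over the $\lesssim \log n$ trap encounters contributes $n^{-\alpha} \log n$, produces the claimed upper bound; the lower bound transfers from that of $\tau_2 - \tau_1$, since the first deep trap already forces $\Prob(\tau_1 \geq n) \geq c n^{-\alpha}$. The main technical obstacle will be pinning down the trap-depth distribution under $\Prmp^\circ$ with matching exponential rate exactly $2\lambdacrit$, so that upper and lower one-trap estimates share the same exponent $\alpha$; this is the point where the explicit formula for $\lambdacrit$ enters, and where I expect the most bookkeeping, particularly for the $\tau_1$ bound which requires switching between the $\Prmp$- and $\Prmp^\circ$-measures.
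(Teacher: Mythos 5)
Your lower bound and your one-trap heavy-tail trade-off are essentially the paper's argument: a trap of depth $m$ occurs with probability of order $e^{-2\lambdacrit m}$, escape from it takes a geometric time with parameter of order $e^{-2\lambda m}$, and optimizing over $m\approx \log n/(2\lambda)$ gives $n^{-\alpha}$ with the log-periodic (non-regularly-varying) behaviour you mention. But the upper bound as you propose it has a genuine gap. You write that ``between regenerations only a geometric number of independent traps are encountered; a union bound over them yields the upper bound $dn^{-\alpha}$.'' The traps, the number of excursions the walk makes into each of them, and the event defining the regeneration structure are \emph{not} independent: conditioning on $\rho_1,\rho_2$ being points visited exactly once ties the trajectory (hence the visit counts $V_i$) to the environment (hence the trap depths $\ell_i$), and the visit count to a given trap entrance depends on the configuration beyond that trap. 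This is exactly the obstacle the paper flags at the start of Section \ref{section:coupling} and resolves with its main technical device: the pruned environment $\omegaprune$ and the coupled pruned walk, whose regeneration data $(\rho_1^{\mathrm{p}},\tau_1^{\mathrm{p}})$ dominate the number of trap entrances and visits while being \emph{independent} of the trap lengths, so that the one-trap estimate of Lemma \ref{Lem:annealed tail estimate for single trap} can be summed against the moments of $\tau_1^{\mathrm{p}}$ from Lemma \ref{Lem:moment estimate for tau_1^p}. This is also the only place where the hypothesis $\lambda>\log(2)/2$ is used (transience of the pruned walk, Lemma \ref{Lem:transience pruned walk}); your proposal never explains why that threshold should appear, which is a symptom of the missing decoupling step. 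A correct proof needs either this coupling or some substitute argument that dominates the visit counts by quantities independent of the trap depths; a plain union bound does not.

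Your mechanism for the extra $\log n$ in the $\Prob(\tau_1\geq n)$ bound is also not the right one. A geometric number $N$ of traps (or of pre-regeneration points tried before the first regeneration) does not produce a logarithm: summing $\Prob(N=k)\,k^{1+\alpha}(n/k)^{-\alpha}$ over $k$ gives a constant times $n^{-\alpha}$, and your split at $N>C\log n$ would in fact give $(n/\log n)^{-\alpha}\log n$, which overshoots the claimed bound. In the paper the $\log n$ comes from a different source entirely: under $\Prmp$ the trap piece containing the origin is size-biased, $\Prmp(\ell_0=m)\leq \chi(p)\,m\,e^{-2\lambdacrit m}$ (Lemma \ref{Lem:trap length distribution}(b)), and the extra factor $m\approx\log n$ in the trap-length distribution is what turns the one-trap tail into $n^{-\alpha}\log n$ for the $i=0$ trap (the $c_1'$ term in Lemma \ref{Lem:annealed tail estimate for single trap}). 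Finally, even for a single trap your $\asymp$ needs the intermediate step (Lemma \ref{Lem:quenched tail estimate for single trap}) comparing the total sojourn time to a constant multiple of the number of bottom-to-bottom excursions, with exponentially small exceptional probability; this is routine but cannot be skipped if you want the matching power $n^{-\alpha}$ on the upper side.
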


\begin{figure}[htbp]
\begin{center}
\begin{tikzpicture}[scale=1]
		\draw [help lines] (0,0) grid (5,4.1);
		\draw [-] (0,0) -- (5,0);
		\draw [->] (0,-0.1) -- (0,4.2);
		\draw [-] (5,-0.1) -- (5,0.1);
		\draw (0,0) node[below]{$0$};
		\draw (2.5,0) node[below]{$p$};
		\draw (5,0) node[below]{$1$};
		\draw (0,2*0.6931472/2) node[left]{$\frac{\log(2)}{2}$};
		\draw (0,2) node[left]{$1$};
		\draw (0,3) node[left]{$1.5$};
		\draw (0,4) node[left]{$2$};
		\draw [thick, domain=0.08:4.925, samples=128] plot(\x,
{ln(2/(1+2*(\x/5)-2*(\x/5)^2-sqrt(1+4*(\x/5)^2-8*(\x/5)^3+4*(\x/5)^4)))});
		\draw (2.5,1.7) node[above]{$\lambdacrit$};
		\draw [thick, domain=0.0012:4.999, samples=128] plot(\x,
{0.5*ln(2/(1+2*(\x/5)-2*(\x/5)^2-sqrt(1+4*(\x/5)^2-8*(\x/5)^3+4*(\x/5)^4)))});
		\draw (2.5,0.9) node[above]{$\frac{\lambdacrit}{2}$};
		\draw [thick, domain=0:5, samples=128] plot(\x, {2*ln(2)/2});
\end{tikzpicture}
\vspace{-0.3cm}
\caption{The figure shows $\lambdacrit$ and $\lambdacrit/2$ as functions
of $p$.
Our Proposition \ref{Prop:tail estimate} giving precise tail asymptotics
for the regeneration times applies for $\lambda > \log(2)/2$,
which is strictly smaller than $\lambdacrit/2$ for any $p\in(0,1)$.}
\label{fig:lambdacrit}
\end{center}
\end{figure}
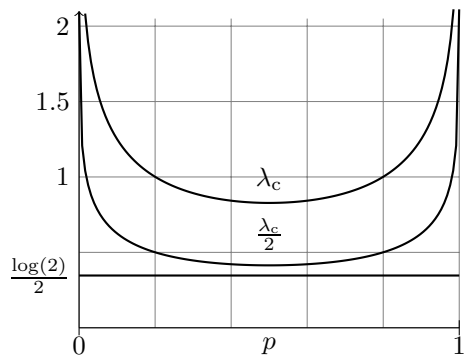

The bulk of the work in this paper is required to prove this proposition.
Before we turn to its proof,
we first demonstrate in the subsequent section how the main results of the
paper,
Theorems \ref{Thm:critical speed} and \ref{Thm:noncritical regimes},
can be derived from it.
The proofs of these theorems are generic in the sense that they
do not use the particular definition of $X_n$,
but will apply to any random walk $X_n$
for which there are regeneration points and times
satisfying the conclusions of Lemma \ref{Lemma:known results rho and tau}
and Proposition \ref{Prop:tail estimate}.

\section{Proofs of the main results}	\label{sec:proofs of main results}

\subsection{Preliminaries and notation}

For random variables $X$ and $Y$ with distribution functions $F$ and $G$,
respectively,
we say that $X$ is \emph{stochastically dominated} by $Y$, and write $X
\distleq Y$,
if $F(t) \geq G(t)$ for all $ t \in \R$.

Convergence in distribution of a sequence $(X_n)_{n\in\N}$ of random
variables towards a random variable $X$
is denoted $X_n \distto X$.
Analogously, convergence in probability of $X_n$ to $X$ under $\Prob$ is
denoted by
$X_n \Probto X$.

As usual, for sequences $a,b: \N \to [0,\infty)$, we write $a = o_n(b)$ or
$a_n = o(b_n)$ as $n \to \infty$
if for every $\epsilon > 0$
there is an $n_0 \in \N$ with $a_n \leq \epsilon b_n$ for all $n \geq n_0$.
We say that $a$ and $b$ are \emph{asymptotically equivalent} and write $a
\sim b$ or $a_n \sim b_n$ as $n \to \infty$
if $a_n,b_n > 0$ for all sufficiently large $n$ and $\lim_{n \to\infty}
a_n/b_n = 1$.
Finally, we write $a = \mathcal{O}_n(b)$ or $a_n = \mathcal{O}(b_n)$ as $n
\to \infty$
if there exists some $C>0$ such that $a_n \leq C b_n$ for all sufficiently
large $n$.

From Lemma \ref{Lemma:known results rho and tau}, we infer that the
$\tau_n$, $n \in \N$
are the points of a delayed renewal process on the integers.
The corresponding \emph{renewal counting process} and \emph{first passage
times},
we denote by
\begin{equation*}	\textstyle
k(n) \defeq \max \big\{k \in \N_0: \tau_k \leq n \big\}
\quad	\text{and}	\quad
\nu(n)	\defeq	k(n) + 1,
\end{equation*}
respectively, where $n \in \N_0$.
Notice that $k(n) = \max \{k \in \N_0: \rho_k \leq X_n\}$, $n \in \N_0$.

To infer Theorems \ref{Thm:critical speed} and \ref{Thm:noncritical
regimes} from Proposition \ref{Prop:tail estimate},
we shall choose a sequence $(\xi_k)_{k \in \N}$ of independent random variables the $\xi_k$, $k \geq 2$
are i.i.d., $\tau_2 \! - \! \tau_{1} \distleq \xi_2$ and
$\Prob(\xi_2 > n) \sim d n^{-\alpha}$ as $n \to \infty$ (where $d$ is
chosen as in Proposition \ref{Prop:tail estimate}).
Then the law of $\xi_2$ is in the (normal) domain of attraction of an
$\alpha$-stable law.
From general theory it then follows that, after a suitable renormalisation,
the first passage times $\nu_\xi(t) := \inf\big\{k \in \N: \sum_{i=1}^k \xi_i > t\}$ converge in distribution as $t \to \infty$.
This will imply tightness of the first passage times $\nu(n)$ with the
same renormalisation.
From this, we shall derive the dual results for $X_n$ which translate into
the statements of Theorems \ref{Thm:critical speed} and
\ref{Thm:noncritical regimes}.

\subsection{Proofs of Theorems \ref{Thm:critical speed} and
\ref{Thm:noncritical regimes}}

We begin with the proof of the results in the sub-ballistic regimes.

\begin{proof}[Proof of Theorem \ref{Thm:critical speed} and Theorem
\ref{Thm:noncritical regimes}(c)]
Suppose that $\lambda \geq \lambdacrit$
so that $\alpha \in (0,1]$.
Let $a_n \defeq n^{\alpha}$ if $\alpha \in (0,1)$ and $a_n \defeq n/\log n$ if
$\alpha = 1$.
For $n \in \N$, we have
\begin{equation}	\label{eqn:alpha<=1-sandwich}
\frac{ \rho_{k(n)} }{a_n} \leq \frac{X_n}{a_n} \leq
\frac{\rho_{\nu(n)}}{a_n} = \frac{\rho_{\nu(n)}}{\nu(n)}
\frac{\nu(n)}{a_n}.
\end{equation}
Since $\nu(n) \to \infty$ $\Prob$-a.\,s.\ as $n \to \infty$, Lemma
\ref{Lemma:known results rho and tau} and the strong law of large numbers
imply
\begin{equation*}
\frac{\rho_{\nu(n)}}{\nu(n)}
= \frac{1}{\nu(n)} \sum_{k=1}^{\nu(n)} (\rho_k \! - \! \rho_{k-1}) \to
\E[\rho_2 \! - \! \rho_1]	\quad	\Prob\text{-a.\,s.}
\end{equation*}
Using Proposition \ref{Prop:tail estimate}, we can find independent random
variables $\eta_k$, $k \in \N$ and $\xi_k$, $k \in \N$
such that $\eta_1,\eta_2,\ldots$ are i.i.d.\ and $\xi_2,\xi_3,\ldots$ are
i.i.d.\ and such that
$\eta_k \distleq \tau_{k} \! - \! \tau_{k-1} \distleq \xi_k$ for all $k
\in \N$
and
\begin{align*}
\Prob(\eta_1 > n ) \sim cn^{-\alpha}
\quad \text{and} \quad
\Prob(\xi_2 > n ) \sim dn^{-\alpha}
\qquad	\text{as } n \to \infty.
\end{align*}
Further, we may choose $\xi_1$ independent of $\xi_2,\xi_3,\ldots$
such that $\Prob( \xi_1 > n ) \sim dn^{-\alpha}\log n$ as $n \to \infty$.
We set $\nu_\eta(n) \defeq \inf \{k \in \N: \sum_{i=1}^k \eta_i > n\}$
and $\nu_\xi(n) \defeq \inf \{k \in \N: \sum_{i=1}^k \xi_i > n \}$.
Then it holds that $\nu_\xi(n) \distleq \nu(n) \distleq \nu_\eta(n)$ for
all $n \in \N_0$.
Furthermore,
Theorem 3a in \cite{Bingham:1972} says that there is an $\alpha$-stable
subordinator $(Y_\alpha(t))_{t \geq 0}$
with Laplace exponent $\log \E[\exp(-s Y_\alpha(t))] = -ts^\alpha$ for
$s,t \geq 0$ such that
\begin{equation}	\label{eqn:conv to X_alpha}
a_n^{-1} \nu_\eta(n) \distto c_\eta X_\alpha
\quad \text{and} \quad
a_n^{-1} \nu_\xi(n) \distto c_\xi X_\alpha
\end{equation}
where $X_\alpha = \sup \{t \geq 0: Y_\alpha(t) \leq 1 \}$ and $0 < c_\xi \leq
c_\eta < \infty$.
(Notice that other than in \cite{Bingham:1972}, here we allow $\xi_1$ to
have a distribution different than that of $\xi_2,\xi_3,\ldots$,
but the contribution of the first step vanishes as $n \to \infty$.)
The difference of upper and lower bound in \eqref{eqn:alpha<=1-sandwich}
satisfies
\begin{equation}	\label{eqn:diff lower and upper bound in
eqn:alpha<=1-sandwich}
\frac{\rho_{\nu(n)}}{a_n} - \frac{\rho_{k(n)}}{a_n}	 = \frac{\rho_{\nu(n)}
- \rho_{\nu(n)-1}}{\nu(n)} \frac{\nu(n)}{a_n}
\Probto 0	\quad	\text{as } n \to \infty.
\end{equation}
Indeed, the first factor on the right-hand side converges to $0$
$\Prob$-a.\,s.\ as $n \to \infty$
due to Lemma \ref{Lemma:known results rho and tau}(b)
and \cite[Theorem 1.2.3(i)]{Gut:2009}
while the family of laws corresponding to the second factor are tight by \eqref{eqn:conv to X_alpha}.
Consequently, the difference in \eqref{eqn:diff lower and upper bound in
eqn:alpha<=1-sandwich} converges to 0 in distribution
and thus in $\Prob$-probability.

Now suppose $\alpha = 1$.
Then $Y_1(t) = t$ $\Prob$-a.\,s.\ and hence $X_1 = 1$ $\Prob$-a.\,s.
The convergence in \eqref{eqn:conv to X_alpha} thus is in fact convergence
in probability.
This completes the proof of Theorem \ref{Thm:critical speed}.

Finally, if $0 < \alpha < 1$,
then \eqref{eqn:conv to X_alpha} and $\nu_\xi(n) \distleq \nu(n) \distleq
\nu_\eta(n)$ for all $n \in \N_0$
imply that the family of laws of $(\nu(n)/n^\alpha)_{n \in \N}$ is tight.
From \eqref{eqn:alpha<=1-sandwich} and \eqref{eqn:diff lower and upper
bound in eqn:alpha<=1-sandwich}
we conclude that this carries over to the family of laws of
$(X_n/n^\alpha)_{n \in \N}$.
\end{proof}

We now turn to the proof of the main results for ballistic, nondiffusive biases.

\begin{proof}[Proof of Theorem \ref{Thm:noncritical regimes}(a) and (b)]
We prove (a) and (b) simultaneously.
Let $a_n \defeq n^{1/\alpha}$ in the case $\alpha \in (1,2)$
and $a_n \defeq \sqrt{n \log n}$ if $\alpha = 2$.
For $n \in \N$, we have
\begin{equation*}
\frac{\rho_{k(n)} - n \vel}{a_n} \leq \frac{X_n - n \vel}{a_n}
\leq \frac{\rho_{\nu(n)} - n \vel}{a_n}.
\end{equation*}
By the strong law of large numbers, $\nu(n)/n \to 1/\E[\tau_2-\tau_1] \in
(0,\infty)$ $\Prob$-a.\,s.
This together with Lemma \ref{Lemma:known results rho and tau} and
\cite[Theorem 1.2.3(i)]{Gut:2009}
implies $(\rho_{\nu(n)} \! - \! \rho_{k(n)})/ a_n \to 0$ $\Prob$-a.\,s.
On the other hand,
\begin{equation*}
\frac{\rho_{\nu(n)} - n \vel}{a_n}
= \frac{ \rho_{\nu(n)} - \nu(n) \E[\rho_2\!-\!\rho_1]}{ a_n } + \frac{
\nu(n) \E[\rho_2\!-\!\rho_1] - n \vel}{a_n}.
\end{equation*}
The first summand converges to $0$ $\Prob$-a.\,s.\ by \cite[Theorem
1.2.3(ii)]{Gut:2009} if $\alpha \in (1,2)$
and it converges to $0$ in $\Prob$-probability by \cite[Theorem
1.3.1]{Gut:2009} if $\alpha=2$.
It thus remains to check tightness of the family of laws of
\begin{equation*}
\frac{\nu(n)\E[\rho_2 \! - \! \rho_1] - n \vel}{a_n}
= \E[\rho_2 \! - \! \rho_1] \frac{\nu(n) - n/\E[\tau_2 \! - \!
\tau_1]}{a_n},		\quad	n\in\N.
\end{equation*}
For this, uniform integrability of the sequence $(a_n^{-1}(\nu(n) -
n/\E[\tau_2-\tau_1]))_{n\in\N}$
is sufficient. It thus remains to refer to Proposition \ref{Prop:tail
estimate} and Proposition \ref{Prop:uniform integrability of renewal counting
process} in the Appendix.
\end{proof}

\section{Proof of the tail estimate for regeneration times}	\label{sec:tail bounds}

It remains to prove the tail estimate for regeneration times, Proposition
\ref{Prop:tail estimate}.
This will be done in this section.
We begin with the analysis of traps, which will almost immediately result
in a proof of the lower bound in Proposition \ref{Prop:tail estimate}.

\subsection{Traps and biased random walk on a line segment}

As for biased random walk on the supercritical percolation cluster, the
slowdown in the model considered here is due to traps.
These are dead-end regions stretching in the direction of the bias.
For (conditional) percolation on the ladder graph,
this boils down to parallel finite open horizontal line segments with no vertical
connections.

To give a formal definition of a trap, we introduce some notation.
For a vertex $u \in V$, we write $u'$ for $(\x(u),1-\y(u))$.
Further, if $e = \< u,v \> \in E$, we let $e' \defeq \< u',v' \>$.
In particular, $e=e'$ if $e$ is a vertical edge, and $e'$ is the
horizontal edge parallel to $e$
if $e$ is a horizontal edge.
Now we define a \emph{trap} (in $\omega$) to be an open path $P =
(e_1,\ldots,e_m)$
of length $m \in \N$
with edges $e_1 = \< u_0,u_1 \>,\ldots,e_m = \<u_{m-1},u_m\> \in E$ such that
\begin{enumerate}[1.]\itemsep0pt
	\item		$\x(u_{k}) = \x(u_{k-1})+1$ and $\y(u_k) = \y(u_{k-1})$ for
$k=1,\ldots,m$;
	\item		the edges $\<u_0,u_0'\>$ and $e_k'$, $k=1,\ldots,m$ are open (in
$\omega$);
	\item		the edge $\<u_m,u_{m+1}\>$ is closed (in $\omega$) where $u_{m+1}
= (\x(u_m)+1,\y(u_m))$;
	\item		all vertical edges $\<u_k,u_k'\>$ for $k=1,\ldots,m$ are closed
(in $\omega$).
\end{enumerate}
Here, $m$ is called the \emph{length of the trap}, $u_0$ is called the
\emph{trap entrance} and $u_m$ is called the \emph{bottom of the trap}.
\vspace{-0.5cm}
\begin{center}
\begin{tikzpicture}[thin, scale=0.6,-,
                   shorten >=2pt+0.5*\pgflinewidth,
                   shorten <=2pt+0.5*\pgflinewidth,
                   every node/.style={circle,
                                      draw,
                                      fill          = black!80,
                                      inner sep     = 0pt,
                                      minimum width =4 pt}]
\path[draw] 
       node at (0,0) {}
       node at (0,1) {}
       node at (1,0) {}
       node at (1,1) {}
       node at (2,0) {}
       node at (2,1) {}
       node at (3,0) {}
       node at (3,1) {}
       node at (4,0) {}
       node at (4,1) {}
       node at (5,0) {}
       node at (5,1) {}
        ;

    \draw (0,0) -- (0,1) ;
    \draw (0,1) -- (1,1) ;
    \draw (0,0) -- (1,0) ;
    \draw (1,1) -- (2,1) ;
    \draw (1,0) -- (2,0) ;
    \draw (2,0) -- (3,0) ;
    \draw (2,1) -- (3,1);
    \draw (3,0) -- (4,0) ;
     \draw (3,1) -- (4,1);
    \draw (4,1) -- (5,1);

\begin{scope}[dashed]  

      \draw (-0.5,0) -- (-0,0)  ;
      \draw (-0.5,1) -- (-0,1)  ;

      \draw (5,0) -- (5,1)  ;
      \draw (5,0) -- (5.5,0)  ;
      \draw (5,1) -- (5.5,1);

\end{scope}

\begin{scope}   [->,shorten >=4pt+0.5*\pgflinewidth,
                   shorten <=8pt+0.5*\pgflinewidth,
]
	\draw (6,2) -- (5,1);
	\node[draw=none,fill=none] at (6,2) {$\mbox{trap end}$};
	\draw (-1,-1) -- (0,0);
	\node[draw=none,fill=none] at (-1,-1) {$\mbox{trap entrance}$};
	\draw (5,-1) -- (4,0);
	\node[draw=none,fill=none] at (5,-1) {$\mbox{bottom of the trap}$};
 \end{scope}
\end{tikzpicture}
\end{center}
\vspace{-1cm}
The piece $[\x(u_0),\x(u_{m+1}))$ is called (the corresponding) \emph{trap
piece}.

We define the backbone $\B$ to be the subgraph of the infinite
cluster $\Cluster$
obtained by deleting from $\Cluster$ all edges and all vertices in traps
except the trap entrance vertices.
Clearly, $\B$ is connected and contains all pre-regeneration points.
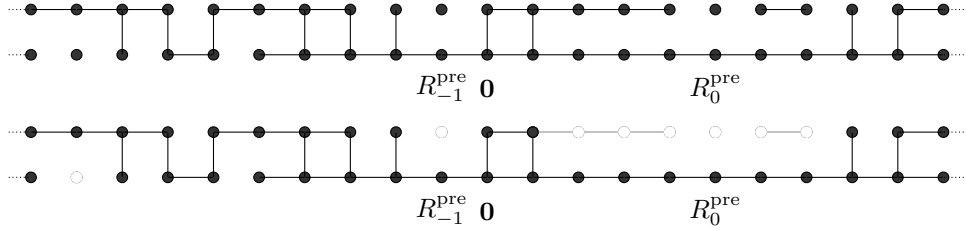
\begin{figure}[h]
\begin{center}
\pgfmathsetseed{216642}
\begin{tikzpicture}[thin, scale=0.6,-,
                   shorten >=0pt+0.5*\pgflinewidth,
                   shorten <=0pt+0.5*\pgflinewidth,
                   every node/.style={circle,
                                      draw,
                                      fill          = black!80,
                                      inner sep     = 0pt,
                                      minimum width =4 pt}]

\def \p {0.5}

\foreach \x in {-10,-9,-8,-7,-6,-5,-4,-3,-2,-1,0,1,2,3,4,5,6,7,8,9,10}
\foreach \y in {0,1}
    \node at (\x,\y) {};

\foreach \x in {-10,-9,-8,-7,-6,-5,-4,-3,-2,-1,0,1,2,3,4,5,6,7,8,9}{
\foreach \y in {1,0}{
    \pgfmathparse{rnd}
    \let\dummynum=\pgfmathresult
    \ifdim\pgfmathresult pt < \p pt\relax \draw (\x,\y) -- (\x+1,\y);\fi
  }}

\foreach \x in {-10,-9,-8,-7,-6,-5,-4,-3,-2,-1,0,1,2,3,4,5,6,7,8,9,10}{
\foreach \y in {0}{
    \pgfmathparse{rnd}
    \let\dummynum=\pgfmathresult
    \ifdim\pgfmathresult pt < \p pt\relax \draw (\x,\y) -- (\x,\y+1);\fi
  }}

	\draw	(-9,1) -- (-8,1);
	\draw	(-7,0) -- (-6,0);
	\draw	(-1,0) -- (0,0);
	\draw	(3,0) -- (4,0);
	\draw	(8,0) -- (9,0);

	\node[draw=none,fill=none] at (-1,-0.75) {$R^{\mathrm{pre}}_{-1}$};
	\node[draw=none,fill=none] at (0,-0.75) {$\mathbf{0}$};
	\node[draw=none,fill=none] at (5,-0.75) {$R^{\mathrm{pre}}_{0}$};

	\draw[densely dotted] (-10.5,0) -- (-10,0);
	\draw[densely dotted] (-10.5,1) -- (-10,1);
	\draw[densely dotted] (10,0) -- (10.5,0);
	\draw[densely dotted] (10,1) -- (10.5,1);
\end{tikzpicture}\smallskip

\pgfmathsetseed{216642}
\begin{tikzpicture}[thin, scale=0.6,-,
                   shorten >=0pt+0.5*\pgflinewidth,
                   shorten <=0pt+0.5*\pgflinewidth,
                   every node/.style={circle,
                                      draw,
                                      fill          = black!80,
                                      inner sep     = 0pt,
                                      minimum width =4 pt}]

\def \p {0.5}

\foreach \x in {-10,-9,-8,-7,-6,-5,-4,-3,-2,-1,0,1,2,3,4,5,6,7,8,9,10}
\foreach \y in {0,1}
    \node at (\x,\y) {};

\foreach \x in {-10,-9,-8,-7,-6,-5,-4,-3,-2,-1,0,1,2,3,4,5,6,7,8,9}{
\foreach \y in {1,0}{
    \pgfmathparse{rnd}
    \let\dummynum=\pgfmathresult
    \ifdim\pgfmathresult pt < \p pt\relax \draw (\x,\y) -- (\x+1,\y);\fi
  }}

\foreach \x in {-10,-9,-8,-7,-6,-5,-4,-3,-2,-1,0,1,2,3,4,5,6,7,8,9,10}{
\foreach \y in {0}{
    \pgfmathparse{rnd}
    \let\dummynum=\pgfmathresult
    \ifdim\pgfmathresult pt < \p pt\relax \draw (\x,\y) -- (\x,\y+1);\fi
  }}

	\draw	(-9,1) -- (-8,1);
	\draw	(-7,0) -- (-6,0);
	\draw	(-1,0) -- (0,0);
	\draw	(3,0) -- (4,0);
	\draw	(8,0) -- (9,0);

	\node[draw=none,fill=none] at (-1,-0.75) {$R^{\mathrm{pre}}_{-1}$};
	\node[draw=none,fill=none] at (0,-0.75) {$\mathbf{0}$};
	\node[draw=none,fill=none] at (5,-0.75) {$R^{\mathrm{pre}}_{0}$};

	\draw[densely dotted] (-10.5,0) -- (-10,0);
	\draw[densely dotted] (-10.5,1) -- (-10,1);
	\draw[densely dotted] (10,0) -- (10.5,0);
	\draw[densely dotted] (10,1) -- (10.5,1);

	\draw[white]	(1,1) -- (4,1);
	\draw[white]	(6,1) -- (7,1);
	\node at (1,1) {};
	\node[white] at (-9,0) {};

\foreach \x in {-1,2,3,4,5,6,7}
\foreach \y in {1}
    \node[white] at (\x,\y) {};
\end{tikzpicture}
\end{center}
\caption{The original percolation configuration and the
backbone}	\label{Fig:backbone}
\end{figure}

Due to the Markovian structure of the percolation process under $\Prmp$,
there are infinitely many traps both to the left and to the right of the
origin $\bfnull$.
Let $T_n$, $n \in \Z$ be an enumeration of all trap pieces such that $T_n$ is
strictly to the left of $T_{n+1}$ for each $n \in \Z$
and that $T_1$ is the trap piece with minimal nonnegative $\x$-coordinate of the
trap entrance.
Denoting the length of the trap in the trap piece $T_n$ by $\ell_n$, the following result holds.

\begin{lemma}[Lemma 3.5 in \cite{Gantert+al:2018}]\label{Lem:trap length
distribution}
\begin{enumerate}[(a)] \itemsep0pt
\item	Under $\Prmp$, $(\ell_n)_{n \neq 0}$ is a family of i.i.d.\ positive
random variables independent of $\ell_0$
	with $\Prmp(\ell_1 = m) = (e^{2 \lambdacrit}-1) e^{-2 \lambdacrit m}$, $m
\in \N$.
\item	There is a constant $\chi(p)$ such that $\Prmp(\ell_0 = m) \leq
\chi(p) m e^{-2 \lambdacrit m}$, $m \in \N$.
\end{enumerate}
\end{lemma}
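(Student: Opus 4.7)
My plan is to exploit the Markov structure of the percolation cluster $\Cluster$ along the $\x$-axis. The conditional measure $\Prmp^*$ on the ladder admits a transfer-matrix representation in which the state at column $x$ records which of the two vertices in that column lie in $\Cluster$, together with the pattern of open horizontal edges leaving the column; this is the viewpoint used in \cite{Axelson-Fisk+H"aggstr"om:2009b}. By mutual absolute continuity of $\Prmp$ and $\Prmp^*$ away from the origin, it suffices to work with the translation-invariant law throughout the tail estimates.

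For part (a), I would reformulate $\{\ell_1 \geq m\}$ as the event that the column-state chain, started at the trap entrance $u_0$, remains in the ``trap-continuation'' configuration (both horizontal edges open, intermediate vertical edge closed) for $m-1$ successive columns. The Markov property then forces $\Prmp(\ell_1 \geq m) = q^{m-1}$ for some constant $q \in (0,1)$, which yields the claimed geometric distribution up to the identification $q = e^{-2\lambdacrit}$. This identification is where the bulk of the work sits: one must match the trap-continuation probability coming from the transfer matrix to the explicit formula for $\lambdacrit$ in Proposition \ref{Prop:transience of Y_n}. Conceptually the match is forced by the fact that $\lambdacrit$ is precisely half the tail-decay rate of the trap length, since that is the bias at which the annealed escape time from a typical trap stops being integrable. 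The independence and identical-distribution statements for $(\ell_n)_{n\neq 0}$ follow because the ``terminating'' column between two successive traps places the column-state Markov chain into a fixed configuration, decoupling the pieces on either side into an i.i.d.\ sequence independent of the anomalous initial block containing $T_0$.

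For part (b), the length $\ell_0$ is \emph{size-biased}: $T_0$ is the trap selected by its position relative to the fixed origin, and longer traps span more columns and are therefore more likely to be this selected one. Concretely, I would bound
\begin{equation*}
\Prmp(\ell_0 = m) \;\leq\; \sum_{k=0}^{m} \Prmp\bigl(\text{a trap of length } m \text{ has its entrance at column } -k\bigr),
\end{equation*}
estimate each summand by a constant multiple of $e^{-2\lambdacrit m}$ using part (a) together with a uniform bound on the Radon--Nikodym derivative between $\Prmp$ and the stationary law $\Prmp^*$, and sum over $k$ to obtain the extra factor $m$. The constant $\chi(p)$ absorbs both this derivative and the geometric prefactor $e^{2\lambdacrit} - 1$ coming from part (a).
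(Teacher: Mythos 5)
The paper does not prove this lemma at all --- it is imported verbatim from Lemma 3.5 of the earlier work of Gantert, L\"ubbers and Meiners/M\"uller, where it is obtained by direct computation with the explicit (transfer-matrix) description of the conditioned percolation law of Axelson-Fisk and H\"aggstr\"om. Your sketch has the right skeleton (column-wise Markov structure giving an exactly geometric trap-continuation probability, and size-biasing of $\ell_0$), but it fails at precisely the point you yourself flag as ``the bulk of the work'': the identification of the per-column rate $q$ with $e^{-2\lambdacrit}$. Your justification --- that the match ``is forced'' because $\lambdacrit$ is the bias at which the annealed escape time from a typical trap stops being integrable --- is circular. The formula for $\lambdacrit$ in Proposition \ref{Prop:transience of Y_n} is \emph{derived} in \cite{Axelson-Fisk+H"aggstr"om:2009b} from the trap-length computation you are trying to prove; and running the implication in the other direction (from the location of the speed phase transition back to the trap tail) would require the regeneration-time machinery and tail estimates that in this paper rest on the present lemma. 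What is actually needed is an explicit calculation: compute the conditional probability, under $\Prmp^*$, that an existing trap extends by one more column, and verify algebraically that it equals $\tfrac12\big(1+2p-2p^2-\sqrt{1+4p^2-8p^3+4p^4}\big)=e^{-2\lambdacrit}$. Without that computation the quantitative content of part (a) is simply asserted.

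Two further points. First, your reduction ``by mutual absolute continuity of $\Prmp$ and $\Prmp^*$'' only transfers inequalities up to constants (the density is bounded by $1/\Prmp^*(\bfnull\in\Cluster)$); it cannot deliver the \emph{exact} identity $\Prmp(\ell_1=m)=(e^{2\lambdacrit}-1)e^{-2\lambdacrit m}$ nor the exact independence of $(\ell_n)_{n\neq0}$ from $\ell_0$ under $\Prmp$, since the conditioning on $\{\bfnull\in\Cluster\}$ can interact with $T_1$ (whose entrance may sit at or next to the origin) and with $T_0$; this interaction must be argued, not waved away. Second, in part (b) your union bound over entrance positions $-k$ with $k\in\{0,\dots,m\}$ is not an upper bound for $\Prmp(\ell_0=m)$: by the definition of the enumeration, $T_0$ is the last trap piece with entrance strictly left of the origin and need not reach the origin, so its entrance can lie at $-k$ for arbitrarily large $k$ provided no other trap entrance occurs in between. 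One must include those terms and use the geometric cost of seeing no trap entrance over a stretch of length of order $k-m$ to recover the bound $\chi(p)\,m\,e^{-2\lambdacrit m}$; as written, the displayed inequality is false, although the intended size-biasing mechanism behind the factor $m$ is the correct one.
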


An excursion of the random walk $(Y_n)_{n \in \N_0}$ into a fixed trap of
length $m$ can be identified
with an excursion of a biased random walk $(S_n)_{n \in \N_0}$ on the line
graph $\{0,1,\ldots,m\}$ where $m$ is the length of the trap.
Therefore, we study biased random walk on $\{0,1,\ldots,m\}$.
Let
\begin{equation*}	\textstyle
\p \defeq \frac{e^\lambda}{e^\lambda+e^{-\lambda}},
\quad	\q \defeq 1-\p,
\quad	\text{and}	\quad
\gamma \defeq \frac{\q}{\p} = e^{-2 \lambda}.
\end{equation*}
We write $P^k_{m,\lambda}$
for the law of a biased random walk $(S_n)_{n \in \N_0}$ on
$\{0,\ldots,m\}$ starting at $k\in\{0,\ldots,m\}$,
moving to the right with probability $\p$ and moving left with probability
$\q$ from any vertex other than $0,m$.
The origin $0$ is supposed to be absorbing and at $m$ the walk stays put
with probability $\p$ and moves left with probability $\q$.
We write $E^k_{m,\lambda}$ for the corresponding expectation.
We drop the superscript $k$, both in $P^k_{m,\lambda}$ as well as
$E^k_{m,\lambda}$, if $k=1$.

For $k,l \in \{0,...,m\}$ we write $\sigma_k \defeq \inf\{j \in \N_0: S_j
= k\}$, $\sigma_k^{+} \defeq \inf\{j \in \N: S_j = k\}$,
and $\sigma_{k \to l} = \inf\{j \geq 0: S_j = l\}$ on $\{S_0=k\}$.
Let $\mathsf{e}_m \defeq P^m_{m,\lambda}(\sigma_0^{+} < \sigma_m^{+})$ be
the escape probability from the rightmost node in the trap to the trap
entrance
without a rebound to the rightmost node in the trap.
By the well-known Gambler's ruin formula, this is
\begin{equation}	\label{eq:Gambler's ruin}
\mathsf{e}_m	= P_{m,\lambda}^m(\sigma_0^{+} < \sigma_m^{+})
= \q \frac{\gamma^{m-1}-\gamma^{m}}{1-\gamma^{m}}
= \gamma^m \p \frac{1-\gamma}{1-\gamma^{m}}.
\end{equation}

\subsection{The proof of the lower bound}

We are ready to prove the lower bound.

\begin{lemma}	\label{Lem:tail estimate lower bound}
There exists some $c>0$ such that, for all $n\in\N$,
\begin{equation*}
\Prob(\tau_2-\tau_1\geq n ) \geq	c
n^{-\alpha}	\quad	\text{and}	\quad	\Prob(\tau_1\geq n) \geq c
n^{-\alpha}
\end{equation*}
\end{lemma}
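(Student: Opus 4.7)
The plan is to exhibit, for each large $n$, a concrete environment-plus-path event of $\Prob$-probability at least $c n^{-\alpha}$ on which the walk spends at least $n$ steps inside a single trap, hence has no chance of reaching a regeneration point in the meantime. Set $m_n \defeq \lceil (\log n)/(2\lambda) \rceil$, so that by the Gambler's ruin formula \eqref{eq:Gambler's ruin} the escape probability from the bottom of a trap of length $m_n$ satisfies $\mathsf{e}_{m_n} \asymp 1/n$.

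The construction has three pieces. First, I would describe a local environment event $B_n$ stating that the piece between two consecutive pre-regeneration points (the first cycle to the right of $\bfnull$, say) contains exactly one trap, of length $m_n$, placed a bounded positive distance from $\bfnull$, and is of overall bounded backbone length. By Lemma \ref{Lem:trap length distribution}(a), the constraint on the trap length contributes a factor of order $e^{-2\lambdacrit m_n} = n^{-\alpha}$, while the remaining finitely many edge specifications cost only a uniform positive constant, giving $\Prmp(B_n) \geq c_0 n^{-\alpha}$ (and analogously under $\Prmp^\circ$). Second, on $B_n$ I would bound below the quenched probability that the walk reaches the trap entrance along the backbone, steps into the trap, and descends to the bottom; each of these has probability bounded below uniformly in $n$, the bounded-length backbone portion by uniform positivity of the bias-weighted transitions, and the descent from one step inside the trap to the bottom by Gambler's ruin, which gives $(1-\gamma)/(1-\gamma^{m_n}) \geq 1-\gamma$. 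Third, from the bottom of the trap the return time $\sigma_0$ to the entrance stochastically dominates twice a $\mathrm{Geometric}(\mathsf{e}_{m_n})$ variable, so
\begin{equation*}
P^{m_n}_{m_n,\lambda}(\sigma_0 \geq n) \;\geq\; (1-\mathsf{e}_{m_n})^{n/2} \;\geq\; c_3 > 0
\end{equation*}
uniformly in $n$ by the choice of $m_n$.

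Combining these three pieces, together with the observation that the walk cannot visit a pre-regeneration point (and a fortiori no regeneration point) while confined to the trap, yields $\Prob(\tau_1 \geq n) \geq c\, n^{-\alpha}$ directly. For the bound on $\Prob(\tau_2 - \tau_1 \geq n)$, I would run the same construction under $\Prob^\circ$, additionally requiring that after eventually escaping the trap the walk never returns to $\bfnull$. The post-escape non-return probability is a uniform positive constant by transience (Proposition \ref{Prop:transience of Y_n}) and the Markov property, so one obtains a lower bound of order $n^{-\alpha}$ on $\Prob^\circ(\tau_1 \geq n,\, Y_k \neq \bfnull \text{ for all } k \geq 1)$ and translates it via Lemma \ref{Lemma:known results rho and tau}(a), dividing by the positive constant $\Prob^\circ(Y_k \neq \bfnull \text{ for all } k \geq 1)$.

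The main obstacle is not any individual probabilistic estimate---each ingredient is elementary---but coordinating the explicit trap construction with the ``no return to origin'' condition that enters via Lemma \ref{Lemma:known results rho and tau}(a). Positioning the trap a bounded positive distance to the right of $\bfnull$, rather than making $\bfnull$ itself the trap entrance, is the key adjustment: the trap excursions then never pass through $\bfnull$, and only the global transience of the walk after exiting the trap needs to be invoked, costing one further uniform positive factor.
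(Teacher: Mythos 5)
Your proposal is correct and follows essentially the same route as the paper's own proof: a trap of length $m_n \asymp \log n/(2\lambda)$ placed right next to the origin (environment cost of order $e^{-2\lambdacrit m_n}=n^{-\alpha}$ via the trap-length law), entry and descent to the bottom at constant cost by Gambler's ruin, confinement for $n$ steps because $\mathsf{e}_{m_n}$ is of order $1/n$, and the reduction of $\tau_2-\tau_1$ to a lower bound on $\Prob^\circ(\tau_1\geq n,\,Y_k\neq\bfnull \text{ for all }k\geq1)$ via Lemma \ref{Lemma:known results rho and tau}(a). Your explicit post-escape non-return factor (which should be justified by a uniform quenched escape-probability bound of $\pesc$-type rather than by transience alone, since transience is only an almost-sure statement) is, if anything, slightly more careful than the paper, which passes over this point silently.
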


In the next proof and throughout the paper, for a random variable $Z$ and
$\hat{p} \in (0,1)$, we write $Z \sim \text{geom}(\hat{p})$
if $Z$ is geometric with success parameter $\hat{p}$, i.e., $\Prob(Z=k) =
\hat{p} (1-\hat{p})^{k}$, $k\in\N_0$.

\begin{proof}
According to Lemma \ref{Lemma:known results rho and tau}, we find
\begin{equation*}
\Prob(\tau_2-\tau_1 \geq n) 
= \Prob^\circ(\tau_1 \geq n | Y_k \not = \bfnull \text{ for all } k \geq 1) 
\geq \Prob^\circ(\tau_1 \geq n, \, Y_k \not = \bfnull \text{ for all } k \geq 1).
\end{equation*}
On the other hand, as $\Prob(R^\mathrm{pre}_0 = \bfnull) >0$, we can
safely write
\begin{align*}
\Prob(\tau_1 \geq n) 
&\geq \Prob(R^\mathrm{pre}_0 = \bfnull) \Prob(\tau_1 \geq n,Y_k \not = \bfnull \text{ for all } k \geq 1 | R^\mathrm{pre}_0 = \bfnull). \\
&= 	\Prob(R^\mathrm{pre}_0 = \bfnull) \Prob^\circ(\tau_1 \geq n, Y_k \not = \bfnull \text{ for all } k \geq 1).
\end{align*}
We therefore provide a lower bound for $\Prob^\circ(\tau_1 \geq n,\, Y_k \not
= \bfnull \text{ for all } k \geq 1)$.
Under $\Prmp^\circ$, there is a pre-regeneration point at $\bfnull$ as
depicted in the figure below.
\vspace{-0.25cm}
\begin{center}
\begin{figure}[htbp]
\begin{tikzpicture}[thin, scale=0.6,-,
                   shorten >=2pt+0.5*\pgflinewidth,
                   shorten <=2pt+0.5*\pgflinewidth,
                   every node/.style={circle,
                                      draw,
                                      fill          = black!80,
                                      inner sep     = 0pt,
                                      minimum width =4 pt}]
\path[draw] 
	node at (0,0) {}
	node at (0,1) {}
	node at (1,0) {}
	node at (1,1) {}
	node at (2,0) {}
	node at (2,1) {}
	node at (3,0) {}
	node at (3,1) {}
	node at (4,0) {}
	node at (4,1) {}
	node at (5,0) {}
	node at (5,1) {}
	node at (-1,0){}
	node at (-1,1) {}
        ;

	\draw (1,0) -- (1,1);
	\draw (-1,0) -- (0,0);
	\draw (0,0) -- (1,0);
	\draw (1,1) -- (2,1);
	\draw (1,0) -- (2,0);
	\draw (2,0) -- (3,0);
	\draw (2,1) -- (3,1);
	\draw (3,0) -- (4,0);
	\draw (3,1) -- (4,1);
	\draw (4,1) -- (5,1);

\begin{scope}[dashed]  
	\draw (-1.5,0) -- (-1,0);
	\draw (-1.5,1) -- (-1,1);
	\draw (-1,1) -- (-1,0);

	\draw (5,0) -- (5,1);
	\draw (5,0) -- (5.5,0);
	\draw (5,1) -- (5.5,1);

\node[draw=none,fill=none] at (0,-0.5) {$\bfnull$};
\end{scope}
\end{tikzpicture}
\end{figure}
\end{center}
\vspace{-1cm}
Given there is a pre-regeneration point at $\bfnull$ (as is always the
case under $\Prmp^\circ$),
the law of the percolation cluster to the right of the origin under
$\Prmp$ and $\Prmp^\circ$ coincides
since the $\omega_n$, $n \in \N$ have the same law under $\Prmp$ and
$\Prmp^\circ$.
We may thus argue as on p.\;3404 of \cite{Axelson-Fisk+H"aggstr"om:2009b}
to conclude that the probability that
directly to the right of the origin, there is a trap of length $m$ as in
the picture above is
$\gamma(p) e^{-2\lambdacrit m}$ for some constant $\gamma(p) \in (0,1)$.

We write $T$ for the time spent on the first excursion of
$(Y_n)_{n\in\N_0}$ into the trap right of the origin. We have
\begin{equation*}
\Prob^\circ( \tau_1 \geq n, Y_k \not = \bfnull \text{ for all } k \geq 1 )
\geq \Prob^\circ( T \geq n, \text{there is a trap directly to the right of the origin}).
\end{equation*}
Typically, after entering the trap the walk drifts towards the bottom of
the trap and then requires a geometric number of trials to leave again.
It follows from the Gambler's ruin formula that for all $m$, hitting the bottom before leaving the trap
has positive probability bounded from below:
\begin{align*}
		P_{m,\lambda}^1 ( \sigma_m < \sigma_0 )
						=	\frac{ 1 - \gamma^1}{ 1 - \gamma^m}
						>	1 - \gamma
						>	0.
\end{align*}
The probability of leaving the trap from the bottom without rebound to the bottom is
$\mathsf{e}_m$. In order to visit the trap in the situation as depicted above, two
steps to the right at the start suffice. Thus we get
\begin{align*}
\Prob^\circ(\tau_1 \geq n, \, Y_k \not = \bfnull \text{ for all } k \geq 1)
&\geq \Big(\frac{e^\lambda}{ e^\lambda+1+e^{-\lambda} }\Big)^{\!2}
\sum_{m=2}^\infty \gamma(p)e^{-2\lambdacrit m} P_{m,\lambda}^1(T \geq n, \sigma_m < \sigma_0 ) \\
&\geq \frac{ (1-\gamma) e^{2\lambda} \gamma(p) }{ (e^\lambda+1+e^{-\lambda})^2 }
\sum_{m=2}^\infty e^{-2\lambdacrit m} (1-\mathsf{e}_m)^{n-1},
\end{align*}
Restricting this sum to the term of order $\hat{x} \defeq \frac{ \log n }{
|\log \gamma| }$ leads to
\begin{align*}
\Prob^\circ(\tau_1 \geq n, \, Y_k \not = \bfnull \text{ for all } k \geq 1)
&\geq \frac{ (1-\gamma) e^{2\lambda} \gamma(p) }{ (e^\lambda+1+e^{-\lambda})^2 } 
e^{-2\lambdacrit \lfloor \hat{x}\rfloor } 
(1 - \mathsf{e}_{\lfloor \hat{x} \rfloor} )^{n-1} \\
&\geq \frac{ (e^{2\lambda} -1) \gamma(p) }{ (e^\lambda+1+e^{-\lambda})^2 } e^{-2\lambdacrit \hat{x} } (1 - \mathsf{e}_{\hat{x}-1} )^{n-1}	\\
&= n^{-\alpha} \frac{ (e^{2\lambda} -1) \gamma(p) }{ (e^\lambda+1+e^{-\lambda})^2 } 
\exp( - \p (e^{2\lambda}-1)) (1+o_n(1)).
\end{align*}
\end{proof}

Let $(S'_n)_{n\in\N_0}$ be a biased random walk on $\Z$ that mimics the
steps of $(S_n)_{n\in\N_0}$ without staying put.
More precisely, set $S'_0 \defeq 0$ and for $n < \sigma_0$, let
\begin{align*}
& S'_{n+1} = S'_n + 1		\qquad	\text{if } S_{n+1} = S_n+1	\text{ or }
S_{n+1} = S_n = m,	\\
& S'_{n+1} = S'_n - 1		\qquad	\text{if } S_{n+1} = S_n-1.
\end{align*}
After $(S_n)_{n\in\N_0}$ hits the absorbing state 0, we let
$(S_n')_{n\in\N_0}$ move along as the usual biased random walk on $\Z$
with probability $\p$ to jump right.
For $z\in \Z$, write $P^z_{\Z,\lambda}$ and $E^z_{\Z,\lambda}$ for the law
of $(S'_n)_{n\in\N_0}$ starting at $S_0=z$ and the corresponding
expectation, respectively.
For $k\in\Z$, set
\begin{equation*}
\sigma_k^\Z		\defeq	\inf\{ l \geq 0: S'_l = k \}.
\end{equation*}
We start with a well-known fact about biased random walk on $\Z$.

\begin{lemma}	\label{Lem:moments of sigma_1^Z}	
For $x>0$, it holds that
\begin{equation*}
E^0_{\Z,\lambda} \big[ x^{\sigma_1^\Z} \big] = \frac{1 - \sqrt{ 1 - 4 \p \q x^2}}{2\q x}.
\end{equation*}
\end{lemma}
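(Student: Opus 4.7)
The plan is to derive the identity by a standard first-step decomposition combined with the strong Markov property, which reduces everything to a quadratic equation for the generating function.

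First I set $\phi(x) \defeq E^0_{\Z,\lambda}[x^{\sigma_1^\Z}]$ and restrict attention to $x \in (0,1]$ (or more generally to $x$ in the domain of convergence, which will follow from the calculation). Since $\lambda > 0$ implies $\p > \q$, the walk $(S'_n)$ has positive drift, so $\sigma_1^\Z < \infty$ almost surely under $P^0_{\Z,\lambda}$, and $\phi$ is well defined. Conditioning on the first step yields
\begin{equation*}
\phi(x) = \p \cdot x \cdot 1 + \q \cdot x \cdot E^{-1}_{\Z,\lambda}\big[x^{\sigma_1^\Z}\big].
\end{equation*}
By translation invariance of simple random walk on $\Z$, going from $-1$ to $1$ requires first reaching $0$ and then reaching $1$, and by the strong Markov property at the hitting time of $0$ these two passage times are independent, each distributed as $\sigma_1^\Z$ under $P^0_{\Z,\lambda}$. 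Hence $E^{-1}_{\Z,\lambda}[x^{\sigma_1^\Z}] = \phi(x)^2$, and we obtain the quadratic
\begin{equation*}
\q x\, \phi(x)^2 - \phi(x) + \p x = 0.
\end{equation*}

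Solving gives the two candidates
\begin{equation*}
\phi(x) = \frac{1 \pm \sqrt{1 - 4\p\q x^2}}{2\q x}.
\end{equation*}
To pick the correct branch, I observe that $\phi$ is continuous on $(0,1]$ with $\phi(x) \to 0$ as $x \downarrow 0$ (since $\sigma_1^\Z \geq 1$ almost surely, hence $\phi(x) \leq x$). The $+$ branch blows up like $1/(\q x)$ as $x \downarrow 0$, so it is excluded; the $-$ branch satisfies $\frac{1 - \sqrt{1-4\p\q x^2}}{2\q x} \sim \p x$ as $x \downarrow 0$, consistent with $\phi(x) \sim \p x$. This forces the minus sign and yields the claimed formula.

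There is no real obstacle: the only subtle point is the branch selection, which is handled by the boundary condition $\phi(0+) = 0$. One might also note that the discriminant $1 - 4\p\q x^2$ is nonnegative for all $x \in [0,1]$ because $4\p\q \leq 1$, so the expression is real-valued on the relevant domain; analytic continuation in $x$ (if needed later) is unambiguous because the $-$ branch has a removable behaviour at $x = 0$.
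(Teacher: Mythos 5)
Your proposal is correct and follows essentially the same route as the paper: a first-step decomposition plus the strong Markov property yields the quadratic $\q x\,\phi(x)^2 - \phi(x) + \p x = 0$, and the branch is fixed by $\phi(x)\to 0$ as $x\searrow 0$ (the paper invokes dominated convergence for this boundary condition, you use $\phi(x)\le x$, which is equivalent in effect). No gaps.
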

For completeness, we include a brief proof.
\begin{proof}
Let $x>0$ and $f(x) \defeq E^0_{\Z,\lambda} \big[ x^{\sigma_1^\Z} \big]$.
On the one hand, the Markov property gives
\begin{align}	\label{eqn:x^sigma_1^Z}
f(x) = \p x + \q x f(x)^2.
\end{align}
On the other hand, $\lim_{x \searrow 0} f(x) = 0$ due to dominated convergence.
Hence, solving \eqref{eqn:x^sigma_1^Z} for $f(x)$ yields the stated formula.
\end{proof}

We divide the time spent between the visits to the first and second
regeneration point $\tau_2-\tau_1$ as follows.
\begin{equation*}
\tau_2-\tau_1 = (\tau_2 - \tau_1)^\B + (\tau_2 - \tau_1)^\mathrm{traps}
\end{equation*}
where $(\tau_2-\tau_1)^\B$ and $(\tau_2 - \tau_1)^\mathrm{traps}$ are the
time spent in the backbone and in traps, respectively, during the time
interval $[\tau_1,\tau_2)$.
The following Lemma holds.

\begin{lemma}[Lemma 7.5 in \cite{Gantert+al:2018}]	\label{Lem:backbone
time power moments}
For any $\kappa>0$, we have $\E[((\tau_2 - \tau_1)^\B)^\kappa] < \infty$.
\end{lemma}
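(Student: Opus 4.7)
The plan is to prove the stronger statement that $(\tau_2 - \tau_1)^\B$ admits a finite exponential moment under $\Prob$, which immediately gives $\E[((\tau_2-\tau_1)^\B)^\kappa] < \infty$ for every $\kappa > 0$. The heuristic is that by construction $\B$ has no rightward traps, so biased random walk restricted to the backbone inherits a uniform forward drift from $\lambda > 0$ and cannot suffer the long waiting times that produce the heavy tails of $\tau_2 - \tau_1$ in the full cluster.

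First I would decompose the backbone time level by level. Between $\tau_1$ and $\tau_2$ the walk is confined to the strip $\{v \in \Cluster : \rho_1 \leq \x(v) \leq \rho_2\}$, since the pre-regeneration structure at $R_1$ forces every crossing from $\x$-coordinate $\rho_1$ to $\rho_1 - 1$ to pass through $R_1$, which as a regeneration point is visited exactly once. Writing $N_x$ for the number of steps taken at backbone vertices with $\x$-coordinate equal to $x$,
\begin{equation*}
(\tau_2 - \tau_1)^\B = \sum_{x = \rho_1}^{\rho_2 - 1} N_x,
\end{equation*}
and since $\rho_2 - \rho_1$ has an exponential moment by Lemma \ref{Lemma:known results rho and tau}(b), it suffices to produce a uniform exponential-tail bound on each $N_x$.

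The core step is a one-level estimate: there exist $\eta > 0$ and $C < \infty$, depending only on $p$ and $\lambda$, such that for $\Prmp$-a.e.\ $\omega$, every $x \in \Z$ and every backbone vertex $v$ with $\x(v) = x$, the hitting time $T_x^+$ of level $x+1$ by the walk started at $v$ satisfies $E_{\omega,\lambda}^{v}[\exp(\eta T_x^+)] \leq C$. Restricted to the backbone portion with $\x$-coordinates in $\{x-1, x, x+1\}$, the walk is a reversible chain on at most six vertices; because $v \in \B$, it has an open forward escape route in this local graph, and the conductance ratio $e^\lambda : 1 : e^{-\lambda}$ for forward/vertical/backward moves yields a uniform rightward step probability of at least $e^\lambda/(e^\lambda + 1 + e^{-\lambda})$. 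A Gambler's-ruin argument in the spirit of Lemma \ref{Lem:moments of sigma_1^Z} then yields the exponential tail uniformly in $\omega$, and the strong Markov property at successive last-exit times from level $x$ dominates $N_x$ by a geometric sum of i.i.d.\ copies with the same tails.

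A Hölder or direct Laplace-transform estimate then combines the levels: for $\eta'$ sufficiently small, $\E[\exp(\eta' (\tau_2-\tau_1)^\B)] \leq \E[C^{\rho_2-\rho_1}] < \infty$ by the exponential-moment bound on $\rho_2 - \rho_1$. The main obstacle I expect is the uniformity of the one-level estimate: although only a bounded number of local configurations arise, each must be handled individually, and in particular the case where $v$ is a trap entrance needs some care, since the walk may slip into and out of the adjacent trap before completing its level crossing on $\B$. Once uniformity is secured, the remainder of the argument is essentially mechanical.
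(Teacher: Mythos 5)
The paper does not prove this lemma at all: it is imported verbatim as Lemma 7.5 of \cite{Gantert+al:2018}, so your attempt is a from-scratch argument, and its core step is false. You claim a uniform quenched bound on the exponential moment of the level-crossing time, namely that the quenched expectation of $\exp(\eta T_x^+)$ under $P^v_{\omega,\lambda}$ is at most $C$ for a.e.\ $\omega$, every level $x$ and every backbone vertex $v$ at level $x$. But before hitting level $x+1$ the walk $(Y_n)_{n\in\N_0}$ is confined neither to the backbone nor to the window $\{x-1,x,x+1\}$: take a trap of length $m$ whose bottom $u_m$ sits at level $x$ and let $v=u_m'\in\B$ be the vertex opposite the bottom. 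From $v$ the walk can move left along the open parallel path to $u_0'$, step down to the trap entrance $u_0$, and run to the bottom, all at levels $\leq x$; from the bottom it needs a geometric number of bottom-to-bottom excursions with success probability $\mathsf{e}_m\asymp e^{-2\lambda m}$ (see \eqref{eq:Gambler's ruin}) to get out, and it cannot reach level $x+1$ before doing so. Hence for any fixed $\eta>0$ the quenched exponential moment of $T_x^+$ is infinite as soon as $m$ is large enough that $e^{-2\lambda m}<\eta$ (up to constants), and since $\Prmp$-a.s.\ the environment contains arbitrarily long traps, no constants $\eta,C$ can work uniformly. This is precisely the trap mechanism that makes $\tau_2-\tau_1$ heavy-tailed (Proposition \ref{Prop:tail estimate}, Lemma \ref{Lem:tail estimate lower bound}), so a uniform exponential bound on real-time level crossings of the actual walk cannot hold; the ``reversible chain on at most six vertices'' picture treats the walk as if it lived on the backbone, which it does not.

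The underlying conflation is between two different quantities: $(\tau_2-\tau_1)^\B$ counts only the time steps spent at backbone vertices, so what is needed is not an exponential moment of $T_x^+$ (which includes trap delays) but a uniform lower bound on the probability that the walk, from a backbone vertex, never returns to it — equivalently a geometric bound, uniform in $\omega$ and in the vertex, on the number of visits to each of the $\mathcal{O}(\rho_2-\rho_1)$ backbone vertices of the block. Such a bound does follow from an effective-resistance argument of the type the paper uses for the pruned walk in \eqref{eqn:p_esc^p} (dangling trap ends carry no current, so they do not increase the effective resistance to $+\infty$ and change the local conductance $c(v)$ only by a bounded factor); combined with the exponential moments of $\rho_2-\rho_1$ from Lemma \ref{Lemma:known results rho and tau}(b), and with the dependence between the visit counts of different vertices handled (for power moments, H\"older plus Cauchy--Schwarz against the exponential tail of $\rho_2-\rho_1$ already suffices), this would yield a correct proof in the spirit you envisage. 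As written, however, the one-level estimate is wrong and the passage from it to the bound on $N_x$ is not a valid argument, so the proof has a genuine gap.
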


This and Markov's inequality imply the following result.

\begin{lemma}	\label{Lem:backbone time}
It holds that $\Prob((\tau_2 - \tau_1)^\B \geq n) = o(n^{-\alpha})$ as $n \to \infty$.
\end{lemma}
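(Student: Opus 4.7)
The plan is to invoke Markov's inequality with a moment exponent strictly larger than $\alpha$. By Lemma \ref{Lem:backbone time power moments}, for any $\kappa > 0$ the moment $M_\kappa \defeq \E[((\tau_2 - \tau_1)^\B)^\kappa]$ is finite. In particular, I can fix some $\kappa > \alpha$ (say $\kappa = \alpha + 1$) for which $M_\kappa < \infty$.

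With this choice, Markov's inequality applied to the nonnegative random variable $((\tau_2 - \tau_1)^\B)^\kappa$ yields
\begin{equation*}
\Prob\bigl((\tau_2 - \tau_1)^\B \geq n\bigr)
= \Prob\bigl(((\tau_2 - \tau_1)^\B)^\kappa \geq n^\kappa\bigr)
\leq \frac{M_\kappa}{n^\kappa}
\end{equation*}
for every $n \in \N$. Since $\kappa > \alpha$ and $M_\kappa$ does not depend on $n$, the right-hand side is $o(n^{-\alpha})$ as $n \to \infty$, which gives the claim.

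There is no real obstacle here: the whole content of the lemma has been packed into Lemma \ref{Lem:backbone time power moments}, and the present statement merely records the quantitative consequence of having all finite moments. The only small point worth noting is that one should pick $\kappa$ strictly greater than $\alpha$ (not equal to it), so that $n^{-\kappa}/n^{-\alpha} = n^{\alpha - \kappa} \to 0$.
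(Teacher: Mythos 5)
Your proof is correct and is exactly the argument the paper intends: the paper derives this lemma from Lemma \ref{Lem:backbone time power moments} together with Markov's inequality, just as you do by fixing some $\kappa > \alpha$ and bounding $\Prob((\tau_2 - \tau_1)^\B \geq n) \leq M_\kappa n^{-\kappa} = o(n^{-\alpha})$.
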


To obtain an upper bound on $\Prob(\tau_2 - \tau_1 \geq n)$, we thus need to
consider the time spent in traps.
We write $(\tau_2 - \tau_1)^\mathrm{traps}$ as
\begin{equation*}
(\tau_2 - \tau_1)^\mathrm{traps}	=	\sum_{i=1}^{T} \sum_{j=1}^{V_i} T_{ij},
\end{equation*}
where $T$ is the number of traps in $[\rho_1, \rho_2)$, $V_i$ is the
number of visits in the $i$th trap in $[\rho_1, \rho_2)$ and $T_{ij}$ is the time $(Y_n)_{n\in\N_0}$
spends during the $j$th excursion into the $i$th trap in $[\rho_1, \rho_2)$.

\subsection{Tail estimates for the time spent in a single trap}

If we fix a percolation environment $\omega$,
the time spent in a single trap of length $m$ can be split into the time
spent on
bottom-to-bottom excursions and the time spent to reach or leave the
bottom without a rebound to the left- or rightmost, respectively, node of
the trap.
This leads to the following result for a fixed number of excursions into a
single trap.

\begin{lemma}\label{Lem:quenched tail estimate for single trap}
Let $(S_{n,j})_{n \in \N_0}$, $j\in\N$ be i.i.d.\ copies of
$(S_n)_{n\in\N_0}$ starting at $1$.
Further, let $T_{ij}^\mathrm{qu,a}$ be the absorption time at 0 of the walk
$(S_{n,j})_{n\in\N_0}$, $j\in\N$.
Let $R \defeq E_{\Z,\lambda}^0[\sigma_1^\Z] = \frac1{1-2\q}$.
Then, for any $l \in \N$, there exist independent $Z_1,...,Z_l \sim
\text{geom}(\mathsf{e}_m)$ and $m_0 \in \N$ such that, for $m \geq m_0$ and $n \in \N$, we have
\begin{align*}
P_{m,\lambda}\bigg( \sum_{j=1}^l T_{ij}^\mathrm{qu,a} \geq n \bigg)
&\leq 2 P_{m,\lambda}\bigg( \sum_{j=1}^l Z_j \geq \frac{n}{4R} \bigg) \\
&\hphantom{\leq}
+ 3l \max \big\{
P_{m,\lambda}^1\big( \sigma_{1\to0} \geq \tfrac{n}{6l}, \sigma_0 < \sigma_m \big),	 P_{m,\lambda}^1\big( \sigma_{1\to m} \geq \tfrac{n}{6l}, \sigma_m < \sigma_0 \big), \\
&\hphantom{\leq + 3l \max \big\{}
P_{m,\lambda}^m\big( \sigma_{m\to0} \geq \tfrac{n}{6l}, \sigma_0 < \sigma_m^{+} \big)	\big\}.
\end{align*}
\end{lemma}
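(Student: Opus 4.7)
Plan. The proof decomposes each trap excursion into four phases and handles each by a different argument. Fix a copy $(S_{n,j})_{n\in\N_0}$; I would write
\[
T_{ij}^{\mathrm{qu,a}} = A_j + B_j + D_j + C_j,
\]
where $A_j = \sigma_{1\to 0}\1\{\sigma_0<\sigma_m\}$ is the direct absorption time, $B_j = \sigma_{1\to m}\1\{\sigma_m<\sigma_0\}$ is the initial passage time from $1$ to the bottom, $D_j$ is the total length of all successive return rounds from $m$ back to $m$ before the eventual escape, and $C_j$ is the length of the terminating escape round from $m$ to $0$. The strong Markov property applied at each visit to $m$ shows that the number of return rounds is stochastically dominated by a $\mathrm{geom}(\mathsf{e}_m)$ random variable $Z_j$, independent across $j$ and independent of the individual round lengths $\mathcal{E}_{j,k}$ (which are themselves i.i.d.); this furnishes the $Z_1,\ldots,Z_l$ demanded in the statement.

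For the boundary contribution I would iterate union bounds. A first split gives
\[
\textstyle
P\bigl(\sum_j T_j^{\mathrm{qu,a}} \geq n\bigr) \leq P\bigl(\sum_j (A_j+B_j+C_j) \geq n/2\bigr) + P\bigl(\sum_j D_j \geq n/2\bigr).
\]
Pigeonholing the first event across the three types $A,B,C$ forces one of the three sums to exceed $n/6$, and a second union bound over $j\in\{1,\ldots,l\}$ forces a single summand to exceed $n/(6l)$. Identifying the marginal laws of $A_1,B_1,C_1$ as the three quenched probabilities appearing in the statement (the last one by the strong Markov property at the final visit to $m$) reproduces the term $3l\max\{\cdots\}$ with threshold $n/(6l)$.

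For the bottom-to-bottom term, set $N = \sum_j Z_j$. The relabelled sum $\sum_j D_j = \sum_{k=1}^N \mathcal{E}_k$ has i.i.d.\ summands, and the key input is the stochastic domination
\[
\mathcal{E}_k \distleq 1 + \sigma_{1,k}^\Z,
\]
which I would obtain by a synchronous coupling between $(S_n)$ started at $m-1$ and $(S'_n)$ started at $0$: while both walks remain in the interior they have matching increments and offset $m-1$, so on $\{\sigma_m<\sigma_0\}$ we have $\sigma_m = \sigma_1^\Z$, whereas on the unlikely event $\{\sigma_0<\sigma_m\}$ the $\Z$-walk still has to travel back from $1-m$ to $1$, giving $\sigma_0 \leq \sigma_1^\Z$. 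Consequently,
\[
\textstyle
P\bigl(\sum_j D_j \geq n/2\bigr) \leq P\bigl(N \geq n/(4R)\bigr) + P\bigl(\sum_{k=1}^N \sigma_{1,k}^\Z \geq n/2 - N,\; N<n/(4R)\bigr),
\]
and on $\{N<n/(4R)\}$ the conditional mean $NR$ of the $\sigma^\Z$-sum is at most $n/4$, leaving a deviation of order $n(R-1)/(4R)$. A Chernoff bound using the Laplace transform of Lemma \ref{Lem:moments of sigma_1^Z} (finite up to $x<1/(2\sqrt{\p\q})$) then yields exponential decay in $n$ at a rate exceeding the decay rate $|\log(1-\mathsf{e}_m)|/(4R)$ of $P(N\geq n/(4R))$, provided $m_0$ is chosen large enough to make $\mathsf{e}_{m_0}$ sufficiently small. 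Since $R>1$, such a choice exists, and the second probability is absorbed into the prefactor $2$ in front of $P(N\geq n/(4R))$.

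The principal obstacle lies in this final comparison: the coupling and the Chernoff estimate are themselves standard, but ensuring the bound uniformly in $n\in\N$ with exactly the prefactor $2$ requires a careful calibration of $m_0$ against the exponential moments in Lemma \ref{Lem:moments of sigma_1^Z}, with the small-$n$ range absorbed into the trivial bound $P(\cdot)\leq 1$.
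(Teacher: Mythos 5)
Your proposal follows the paper's proof in all essentials: the same three-way decomposition into the initial passage ($A_j$, $B_j$), a geometric number of bottom-to-bottom rounds, and the final escape round; the same union bound producing the $3l\max\{\cdots\}$ term with threshold $n/(6l)$; and the same mechanism for the main term, namely a Chernoff bound for the round lengths whose ($m$-independent) exponential rate is played off against the geometric tail $(1-\mathsf{e}_m)^{n/(4R)}$, with $m_0$ chosen so that $\mathsf{e}_{m_0}$ is small. The only difference is cosmetic: the paper first swaps the random number of rounds for a deterministic one via $P_{m,\lambda}(\sum_{j}\sum_{k\le Z_j}\tilde T_{jk}\ge 2Rn)\le P_{m,\lambda}(\sum_j Z_j\ge n)+P_{m,\lambda}(\sum_{k\le n}\tilde T_{1k}\ge 2Rn)$ and Chernoff-bounds the second term using $E^m_{m,\lambda}[e^{\mu\sigma_m^+}\mid\sigma_m^+<\sigma_0]\le E^0_{\Z,\lambda}[e^{\mu\sigma_1^\Z}]$, whereas you restrict to $\{N<n/(4R)\}$ and Chernoff-bound a sum of at most $n/(4R)$ dominating variables; both calibrations work since $R>1$.

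The one step that is not justified as written is the domination $\mathcal{E}_k\distleq 1+\sigma_1^\Z$. Your $\mathcal{E}_k$ is a \emph{return} round, i.e.\ an $m$-to-$m$ excursion conditioned on $\{\sigma_m^+<\sigma_0\}$, while the synchronous coupling you describe only controls the \emph{unconditioned} excursion (pathwise equality on $\{\sigma_m<\sigma_0\}$); it does not compare the conditional law of the round with the unconditional law of $\sigma_1^\Z$. You still need that conditioning on returning before absorption makes the excursion stochastically shorter; this is true and follows from the Doob $h$-transform with $h(y)=P^y_{m,\lambda}(\sigma_m<\sigma_0)$, which tilts every step to the right, together with a monotone coupling --- the mirror image of the computation the paper carries out in the annealed lemma for the conditioning on $\{\sigma_0<\sigma_m^+\}$ (the paper uses the same input in exponential-moment form and is equally terse about it, so this is a fixable gap, not a wrong approach). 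A further minor point: the tail of the escape round $C_1$ is $P^1_{m,\lambda}(\sigma_m<\sigma_0)\,P^m_{m,\lambda}(\sigma_{m\to 0}\ge t\mid\sigma_0<\sigma_m^+)$, i.e.\ the \emph{conditional} excursion probability, not the joint probability appearing in the lemma; the paper's own union bound makes the same identification, and the discrepancy is immaterial downstream because the annealed estimate bounds the conditional quantity by $P^0_{\Z,\lambda}(\sigma_m^\Z\ge\cdot)$ anyway.
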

\begin{proof}
Let $Z^{(j)}$ be the number of returns to $m$ of $(S_{n,j})_{n\in\N_0}$
before absorption.
For completeness, we define $Z^{(j)} \defeq 0$ on the event where
$(S_{n,j})_{n\in\N_0}$ visits $m$ at most once.
By the strong Markov property, $P_{m,\lambda}(Z^{(j)} = k) =
P_{m,\lambda}^1(\sigma_m < \sigma_0) (1-\mathsf{e}_m)^{k} \mathsf{e}_m$ for $k \in \N$
and $P_{m,\lambda}(Z^{(j)}=0) = P_{m,\lambda}^1(\sigma_0<\sigma_m) +
P_{m,\lambda}^1(\sigma_m<\sigma_0)\mathsf{e}_m$.
We write $\tilde{T}_{jk}$, $k=1,\ldots, Z^{(j)}$ for the durations of
consecutive excursions of $(S_{n,j})_{n \in \N_0}$ from $m$ to $m$,
and let $\tilde{T}_{jk}$, $k> Z^{(j)}$, be a family of i.i.d.\ random
variables distributed as the duration of an excursion of $(S_n)_{n\in\N_0}$ from $m$ to
$m$ conditioned on the event $\{\sigma_m^+ < \sigma_0\}$.
When starting at 1, the walk $(S_n)_{n\in\N_0}$ either hits the absorbing
state 0 before reaching the trap bottom, or hits the bottom, does a
geometric number of bottom-to-bottom excursions, and then gets absorbed.
We have
\begin{align*}
P_{m,\lambda}\bigg( \sum_{j=1}^l T_{ij}^\mathrm{qu,a} \geq n \bigg)
&= P_{m,\lambda}\bigg( \sum_{j=1}^l T_{ij}^\mathrm{qu,a} \geq n, \bigg| 
\sum_{j=1}^l T_{ij}^\mathrm{qu,a} - \sum_{j=1}^l \sum_{k=1}^{Z^{(j)}}
\tilde{T}_{jk} \bigg| \leq \frac{n}{2} \bigg)	\\
&\hphantom{=} + P_{m,\lambda}\bigg( \sum_{j=1}^l T_{ij}^\mathrm{qu,a} \geq n, \bigg|
\sum_{j=1}^l T_{ij}^\mathrm{qu,a} - \sum_{j=1}^l \sum_{k=1}^{Z^{(j)}}
\tilde{T}_{jk} \bigg| > \frac{n}{2} \bigg)	\\
&\leq P_{m,\lambda}\bigg( \sum_{j=1}^l \sum_{k=1}^{Z^{(j)}} \tilde{T}_{jk} \geq \frac{n}{2} \bigg) \\
&\hphantom{\leq} + 3l \max\big\{
P_{m,\lambda}^1\big( \sigma_{1\to 0} \geq \tfrac{n}{6l}, \sigma_0 < \sigma_m\big),
P_{m,\lambda}^1\big( \sigma_{1\to m}\geq \tfrac{n}{6l}, \sigma_m < \sigma_0\big), \\
&\hphantom{\leq + 3l \max\big\{}
P_{m,\lambda}^m\big( \sigma_{m\to0} \geq \tfrac{n}{6l}, \sigma_0 < \sigma_m^{+} \big)	\big\}.
\end{align*}
We can safely replace $Z^{(j)}, j=1,...,l$
by an independent family of i.i.d.\ random variables $Z_j$ with law $\text{geom}(\mathsf{e}_m)$ under $P_{m,\lambda}$.
As $\tilde{T}_{jk}, j=1,...,l, k\in\N$ are nonnegative and i.i.d., we have
\begin{align*}
P_{m,\lambda}\bigg(\sum_{j=1}^l Z_j < n \bigg)
&= P_{m,\lambda}\bigg( \sum_{j=1}^l Z_j < n, \sum_{k=1}^n \tilde{T}_{1k} \geq 2Rn \bigg)
+ P_{m,\lambda}\bigg( \sum_{j=1}^l Z_j < n, \sum_{k=1}^n \tilde{T}_{1k} < 2Rn \bigg) \\
&\leq P_{m,\lambda}\bigg( \sum_{k=1}^n \tilde{T}_{1k} \geq 2Rn \bigg)
	+ P_{m,\lambda}\bigg( \sum_{k=1}^{Z_1+...+Z_l} \tilde{T}_{1k} < 2Rn \bigg) \\
&=	P_{m,\lambda}\bigg( \sum_{k=1}^n \tilde{T}_{1k} \geq 2Rn \bigg)
+ P_{m,\lambda}\bigg( \sum_{j=1}^l \sum_{k=1}^{Z_j} \tilde{T}_{jk} < 2Rn \bigg).
\end{align*}
This implies
\begin{align}\label{eqn:sum via trial number}
P_{m,\lambda}\bigg( \sum_{j=1}^l \sum_{k=1}^{Z_j} \tilde{T}_{jk} \geq 2Rn \bigg)
\leq P_{m,\lambda}\bigg( \sum_{j=1}^l Z_j \geq n \bigg)
+ P_{m,\lambda}\bigg( \sum_{k=1}^n \tilde{T}_{1k} \geq 2Rn \bigg).
\end{align}
Using Markov's inequality, the Markov property, stochastic domination
and Lemma \ref{Lem:moments of sigma_1^Z}, for $\mu>0$, we have
\begin{align*}
P_{m,\lambda}\bigg( \sum_{k=1}^n \tilde{T}_{1k} \geq 2Rn \bigg)
&\leq e^{-2\mu Rn} E_{m,\lambda}^m 
\big[ e^{ \mu \sigma_m^{+} } \big| \sigma_m^{+} < \sigma_0 \big]^n
\leq e^{-2\mu Rn} E_{\Z,\lambda}^0\big[ e^{\mu\sigma_1^\Z} \big]^n \\
&= e^{-2\mu Rn} \bigg( \frac{ 1 - \sqrt{ 1 - 4\p\q e^{2\mu} } }{ 2\q e^\mu } \bigg)^{\!n}.
\end{align*}
The function $f: \big[0,\frac{1}{2}\log\big(\frac{1}{4\p\q}\big) \big] \to \R$ given by
\begin{equation*}
f(\mu) 	\defeq	e^{-2\mu R} \frac{ 1 - \sqrt{ 1 - 4\p\q e^{2\mu} } }{ 2\q e^\mu }	
\end{equation*}
is differentiable and satisfies
\begin{equation*} 
f(0) = \frac{ 1 - (1-2\q) }{ 2\q } = 1, \qquad 
f'(0) = \frac{-1}{ 1 - 2\q } < 0.
\end{equation*}
Hence, there exists $\hat{\mu} > 0$ with $f(\hat{\mu}) < 1$, and
\begin{equation*}
P_{m,\lambda}\bigg( \sum_{k=1}^n \tilde{T}_{1k} \geq 2Rn \bigg)
\leq \bigg( \frac{ f(\hat{\mu}) }{ 1 - \mathsf{e}_m } \bigg)^{\!n} 
\cdot P_{m,\lambda}(Z_1 \geq n ).								
\end{equation*}
As $\mathsf{e}_m \to 0$ for $m\to\infty$, there exists $m_0$ such that $\frac{
f(\hat{\mu}) }{ 1-\mathsf{e}_m } < 1$ for all $m \geq m_0$. 
This and \eqref{eqn:sum via trial number} lead to
\begin{align*}
P_{m,\lambda}\bigg( \sum_{j=1}^l \sum_{k=1}^{Z_j} \tilde{T}_{jk} \geq 2Rn \bigg)
&\leq P_{m,\lambda}\bigg( \sum_{j=1}^l Z_j \geq n \bigg) 
+ \bigg(\frac{ f(\hat{\mu}) }{ 1-\mathsf{e}_{m_0} }\bigg)^{\!n} P_{m,\lambda}(Z_1 \geq n)  \\
&\leq 2 P_{m,\lambda}\bigg( \sum_{j=1}^l Z_j \geq n \bigg) 
\end{align*}
for $m \geq m_0$.
\end{proof}

Lemma \ref{Lem:quenched tail estimate for single trap} can be adapted to the case where the random walk is allowed to take lazy steps.
Let $(S_n^\mathrm{lazy})_{n\in\N_0}$ be the lazy biased random walk on the line graph $\{0,1,\ldots,m\}$ that moves to the right with probability $e^\lambda/(e^\lambda+1+e^{-\lambda})$,
to the left with probability $e^{-\lambda}/(e^\lambda+1+e^{-\lambda})$ and stays put with probability $1/(e^\lambda+1+e^{-\lambda})$ from any vertex other than $0,m$. 
The origin $0$ is again supposed to be absorbing and at $m$, the walk stays put with probability $(e^\lambda+1)/(e^\lambda+1+e^{-\lambda})$
and moves left with probability $e^{-\lambda}/(e^\lambda+1+e^{-\lambda})$.
Slightly abusing notation, we again write $P_{m,\lambda}$ for the law of $(S_n^\mathrm{lazy})_{n\in\N_0}$
starting at $S_0^\mathrm{lazy}=1$, and $E_{m,\lambda}$ for the corresponding expectation.

\begin{lemma}\label{Lem:quenched tail estimate for single trap - lazy}
Let $(S_{n,j}^\mathrm{lazy})_{n \in \N_0}$, $j\in\N$ be i.i.d.\ copies of $(S_n^\mathrm{lazy})_{n\in\N_0}$ starting at $1$.
Further, let $T_{ij}^\mathrm{qu}$ be the absorption time at 0 of the walk
$(S_{n,j}^\mathrm{lazy})_{n\in\N_0}$, $j\in\N$.
Let $R \defeq E_{\Z,\lambda}^0[\sigma_1^\Z] = \frac1{1-2\q}$ and $r_\lambda > e^{2\lambda}+e^\lambda$.
Then, for any $l \in \N$, there exist independent $Z_1,...,Z_l \sim \text{geom}(\mathsf{e}_m)$
and $m_1 \in \N$ such that, for $m \geq m_0\vee m_1$ and $n \in \N$, we have
\begin{align*}
P_{m,\lambda}\bigg(\sum_{j=1}^l T_{ij}^\mathrm{qu} \geq n\bigg)
&\leq 3 P_{m,\lambda}\bigg(\sum_{j=1}^l Z_j \geq \frac{n}{4r_\lambda R} \bigg) \\
&\hphantom{\leq}
+ 3l \max \big\{
P_{m,\lambda}^1\big( \sigma_{1\to0} \geq \tfrac{n}{6lr_\lambda}, \sigma_0 < \sigma_m \big),	 P_{m,\lambda}^1\big( \sigma_{1\to m} \geq \tfrac{n}{6lr_\lambda}, \sigma_m < \sigma_0 \big), \\
&\hphantom{\leq + 3l \max \big\{}
P_{m,\lambda}^m\big( \sigma_{m\to0} \geq \tfrac{n}{6lr_\lambda}, \sigma_0 < \sigma_m^{+} \big)	\big\}.
\end{align*}
\end{lemma}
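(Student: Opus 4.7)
The proof proceeds by mirroring that of Lemma \ref{Lem:quenched tail estimate for single trap}, adapted for the lazy walk. The plan is to use the same decomposition: let $Z^{(j)}$ denote the number of returns to $m$ of $(S_{n,j}^\mathrm{lazy})$ before absorption, and let $\tilde T_{jk}$ denote the durations of the consecutive bottom-to-bottom excursions. I split $T_{ij}^\mathrm{qu}$ into the main term $\sum_k \tilde T_{jk}$ plus an initial and a final segment (the time from $1$ until either hitting $m$ or being absorbed at $0$, and the time from the last visit to $m$ to absorption at $0$ without returning to $m$). As in the non-lazy case, the probability that these two error terms together exceed $n/2$ is controlled by the three exit-type probabilities, now at the rescaled threshold $n/(6l r_\lambda)$ reflecting the fact that the lazy walk takes more steps to traverse the trap.

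For the main term, I replicate the sandwich inequality \eqref{eqn:sum via trial number} with $2R$ replaced by $2 r_\lambda R$, reducing the task to bounding $P_{m,\lambda}\big(\sum_{k=1}^n \tilde T_{1k} \geq 2 r_\lambda R n\big)$. Applying Markov's inequality with exponential moments requires controlling $E^m_{m,\lambda}\big[e^{\mu \sigma_m^+} \mid \sigma_m^+ < \sigma_0\big]$ for the lazy walk. A single lazy bottom-to-bottom excursion decomposes by conditioning on the first step into either a one-step return to $m$ (stay or right-choice at $m$) or a departure to $m-1$ followed by a traversal back to $m$ conditional on no absorption. The sojourn-at-$m$ contribution, when unpacked, is geometric with parameter $e^{-\lambda}/(e^\lambda+1+e^{-\lambda})$ and hence has mean $e^{2\lambda}+e^\lambda+1$; the traversal part is controlled by a lazy-walk analogue of Lemma \ref{Lem:moments of sigma_1^Z}, obtained by first-step analysis on the lazy walk on $\Z$, which yields a closed-form MGF whose derivative at $0$ equals the expected hitting time $R(e^\lambda+1+e^{-\lambda})/(e^\lambda+e^{-\lambda}) \leq 3R/2$. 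A direct computation confirms that for any $r_\lambda > e^{2\lambda}+e^\lambda$ the function $f(\mu) \defeq e^{-2\mu r_\lambda R} \cdot E^m_{m,\lambda}\big[e^{\mu \sigma_m^+} \mid \sigma_m^+ < \sigma_0\big]$ satisfies $f(0) = 1$ and $f'(0) < 0$, so there exists $\hat\mu > 0$ with $f(\hat\mu) < 1$.

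Finally, since $\mathsf{e}_m \to 0$ as $m \to \infty$, one picks $m_1 \geq m_0$ so that $f(\hat\mu)/(1-\mathsf{e}_m) < 1$ uniformly for $m \geq m_1$. The argument is then completed by combining the exponential estimate on $P_{m,\lambda}\big(\sum_{k=1}^n \tilde T_{1k} \geq 2 r_\lambda R n\big)$ with the sandwich identity exactly as in the non-lazy proof. The factor $3$ (as opposed to $2$) in front of the main geometric-sum probability accommodates both the constant in \eqref{eqn:sum via trial number} and the mild stochastic discrepancy between the actual return count of the lazy walk and the reference variable $Z_j \sim \text{geom}(\mathsf{e}_m)$. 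The chief technical hurdle is the lazy analogue of Lemma \ref{Lem:moments of sigma_1^Z}: carrying out the first-step analysis for the lazy walk on $\Z$ is routine but the additional \emph{stay} probability changes the algebra of the quadratic equation for the MGF, and one must verify carefully that the hypothesis $r_\lambda > e^{2\lambda}+e^\lambda$ is sufficient to push $f(\hat\mu)$ strictly below $1$ uniformly in $m$.
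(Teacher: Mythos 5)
Your route is genuinely different from the paper's: you propose to redo the whole bottom-to-bottom excursion analysis directly for the lazy walk, whereas the paper never touches the excursion structure again. It writes $\sum_j T^{\mathrm{qu}}_{ij}$ as the non-lazy absorption times $T^{\mathrm{qu,a}}_{ij}$ with i.i.d.\ geometric holding counts inserted (success probability at least $e^{-\lambda}/(e^\lambda+1+e^{-\lambda})$, mean $e^{2\lambda}+e^\lambda$), splits on whether $\sum_j T^{\mathrm{qu,a}}_{ij}$ exceeds $\lfloor n/r_\lambda\rfloor$, applies Lemma \ref{Lem:quenched tail estimate for single trap} at that threshold (this is where the two geometric terms and the max-terms at $n/(6lr_\lambda)$ come from), and bounds the complementary event by a large-deviation estimate for sums of geometrics that is exponentially small uniformly in $m$ and, for $m\geq m_1$, is at most $(1-\mathsf{e}_m)^{\lceil n/(4r_\lambda R)\rceil}=P_{m,\lambda}(Z_1\geq n/(4r_\lambda R))$; hence the constant $3=2+1$. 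Your approach could be made to work, but as written it contains a genuine gap precisely where you claim the constant $3$ saves you.

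The gap is the excursion count. For the lazy walk, the number of returns to $m$ before absorption (with one-step stays at $m$ counting as returns, which is what your $E^m_{m,\lambda}[e^{\mu\sigma_m^+}\mid\sigma_m^+<\sigma_0]$ presupposes) is geometric with parameter $\tilde{\mathsf{e}}_m=\frac{e^\lambda+e^{-\lambda}}{e^\lambda+1+e^{-\lambda}}\,\mathsf{e}_m<\mathsf{e}_m$, since laziness leaves hitting probabilities unchanged but the walk leaves $m$ only with probability $e^{-\lambda}/(e^\lambda+1+e^{-\lambda})$ instead of $\q$. This count is stochastically \emph{larger} than $\mathrm{geom}(\mathsf{e}_m)$, and no constant prefactor repairs the comparison: the tail ratio $(1-\tilde{\mathsf{e}}_m)^k/(1-\mathsf{e}_m)^k$ grows like $e^{(\mathsf{e}_m-\tilde{\mathsf{e}}_m)k}$, which is unbounded in $k$ for each fixed $m\geq m_0\vee m_1$; and since $r_\lambda$ may be taken arbitrarily close to $e^{2\lambda}+e^\lambda$, you cannot hide the loss in the threshold $n/(4r_\lambda R)$ either. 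So your closing claim that the factor $3$ "accommodates the stochastic discrepancy" between the lazy return count and $\mathrm{geom}(\mathsf{e}_m)$ is false. The consistent bookkeeping along your route is to count only genuine excursions (a step to $m-1$ followed by a return), whose number is geometric with parameter $P^{m-1}_{m,\lambda}(\sigma_0<\sigma_m)=\mathsf{e}_m/\q\geq\mathsf{e}_m$ and hence \emph{is} dominated by $\mathrm{geom}(\mathsf{e}_m)$, while the geometric sojourn at $m$ (mean $e^{2\lambda}+e^\lambda$) is folded into the excursion duration, whose exponential moment is still fine exactly because $r_\lambda>e^{2\lambda}+e^\lambda$; your sketch mixes these two accountings without committing to either. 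A second, smaller issue: the max-terms in the statement are, as the paper produces and later uses them (they are bounded via $E^0_{\Z,\lambda}[e^{\mu\sigma_1^\Z}]$ in Lemma \ref{Lem:annealed tail estimate for single trap}), exit-time tails of the \emph{non-lazy} walk $(S_n)_{n\in\N_0}$; your mirrored argument yields the corresponding lazy exit times, which are stochastically larger, and converting them requires another split into "enough position changes" versus "too many holds" -- that is, essentially the geometric-holding-time reduction the paper performs once, globally.
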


\begin{proof}
We have
\begin{equation*}
\sum_{j=1}^l T_{ij}^\mathrm{qu} \disteq \sum_{j=1}^l \sum_{k=1}^{T_{ij}^\mathrm{qu,a}} \tilde{Z}_{k,j},
\end{equation*}
where $T_{ij}^\mathrm{qu,a}$, $j\in\N$ are as in Lemma \ref{Lem:quenched tail estimate for single trap}, 
and $\tilde{Z}_{k,j}$, $k,j\in\N$ are independent random variables distributed as the number of times the walk $(S_{n,j}^\mathrm{lazy})_{n\in\N_0}$ stays put before it changes its position for the $k$th time. 
Since the probability for $(S_{n,j}^\mathrm{lazy})_{n\in\N_0}$ to change its position at any vertex other than the absorbing state $0$ is bounded from below by $\tilde{p} \defeq e^{-\lambda}/(e^\lambda+1+e^{-\lambda})$, 
we have $\tilde{Z}_{k,j}\distleq Z_{k,j}$ where $Z_{k,j}$, $k,j\in\N$ is a family of i.i.d.\ geometric random variables with success probability $\tilde{p}$. 
Notice that $E_{m,\lambda}[Z_{1,1}] = (1-\tilde p)/\tilde p = e^{2\lambda}+e^\lambda > 2$. Choose $r_\lambda > e^{2\lambda}+e^\lambda$. Then,
as the $Z_{k,j}$, $k,j\in\N$ are nonnegative and i.i.d., we find
\begin{align*}
&P_{m,\lambda}\bigg(\sum_{j=1}^l T_{ij}^\mathrm{qu} \geq n\bigg)
\leq P_{m,\lambda}\bigg( \sum_{j=1}^l \sum_{k=1}^{T_{ij}^\mathrm{qu,a}} Z_{k,j} \geq n\bigg) \\
&~= P_{m,\lambda}\bigg(\!\sum_{j=1}^l \sum_{k=1}^{T_{ij}^\mathrm{qu,a}} Z_{k,j} \geq n, \sum_{j=1}^l T_{ij}^\mathrm{qu,a} > \Big\lfloor \frac{n}{r_\lambda} \Big\rfloor \bigg)
+ P_{m,\lambda}\bigg(\! \sum_{j=1}^l \sum_{k=1}^{T_{ij}^\mathrm{qu,a}} Z_{k,j} \geq n, \sum_{j=1}^l T_{ij}^\mathrm{qu,a} \leq \Big\lfloor \frac{n}{r_\lambda} \Big\rfloor\bigg) \\
&~\leq P_{m,\lambda}\bigg(\sum_{j=1}^l T_{ij}^\mathrm{qu,a} > \Big\lfloor \frac{n}{r_\lambda} \Big\rfloor \bigg) + P_{m,\lambda}\bigg(\sum_{k=1}^{\lfloor\frac{n}{r_\lambda}\rfloor} Z_{k,1} \geq n \bigg).
\end{align*}
Standard large deviation estimates yield that $P_{m,\lambda}(\sum_{k=1}^{\lfloor n/r_\lambda\rfloor} Z_{k,1} \geq n)$ decays exponentially fast as $n \to \infty$ (with a rate which is independent of $m$). 
Hence, as $\mathsf{e}_m\to0$ for $m\to\infty$, there exists $m_1 = m_1(\lambda) \in\N$ such that for all $m\geq m_1$
\begin{align*}
P_{m,\lambda}\Bigg(\sum_{k=1}^{\lfloor\frac{n}{r_\lambda}\rfloor} Z_{k,1} \geq n \Bigg)
&\leq (1-\mathsf{e}_m)^{\lceil \frac{n}{4 r_\lambda R} \rceil} = P_{m,\lambda}\Big(Z_1 \geq \frac{n}{4 r_\lambda R}\Big).
\end{align*}
The remainder of the proof now follows from Lemma \ref{Lem:quenched tail estimate for single trap}.
\end{proof}

In the annealed case, Lemma \ref{Lem:quenched tail estimate for single trap - lazy} 
translates into a tail probability of basically order $n^{-\alpha}$
(given the trap is actually seen).

\begin{lemma}\label{Lem:annealed tail estimate for single trap}
Let $R,r_\lambda,m_0,m_1$ be as in Lemma \ref{Lem:quenched tail estimate for single trap - lazy}
and $\mu>0$ be such that $E_{\Z,\lambda}^0\big[ e^{\mu\sigma_1^\Z} \big] < \infty$.
Further, let $T^{\mathrm{ann}}_{ij}$, $i\in\Z$, $j\in\N$ be a family of random variables which are independent given $\omega$
and with $T^{\mathrm{ann}}_{ij}$ given $\omega$ being distributed as the hitting time of the entrance of the trap in $T_i$ by $(Y_n)_{n\in\N_0}$ under $P_{\omega,\lambda}$
when $(Y_n)_{n\in\N_0}$ starts at the right neighbor of the trap entrance.
Then
\begin{align*}
\Prob\bigg( \sum_{j=1}^l T^{\mathrm{ann}}_{ij} \geq n, \ell_i \geq m_0 \vee m_1 \bigg)
\leq
\begin{cases}
c_1 l^{\alpha+1} n^{-\alpha} + c_2 l e^{ - \mu \tfrac{n}{6lr_\lambda}},			&	\text{for }	i\not = 0,	\\
c_1' l^{\alpha+1} n^{-\alpha} \log n + c_2' l e^{ - \mu \tfrac{n}{6lr_\lambda} }	&	\text{for }	i = 0,
\end{cases}
\end{align*}
where $c_1 = c_1(p,\lambda), c_2 = c_2(p,\lambda), c_1' = c_1'(p,\lambda), c_2' = c_2'(p,\lambda)$ are positive, finite constants neither depending on $n$ nor $l$.
\end{lemma}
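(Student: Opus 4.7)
The plan is to combine the quenched bound of Lemma~\ref{Lem:quenched tail estimate for single trap - lazy} with the law of the trap length from Lemma~\ref{Lem:trap length distribution}. Given $\ell_i = m \geq m_0 \vee m_1$, the excursions of $(Y_n)_{n \in \N_0}$ into the $i$-th trap agree in law with the lazy biased walk on $\{0,\ldots,m\}$ used there, and $T^{\mathrm{ann}}_{ij}$ is exactly the absorption time at the trap entrance. So it suffices to average each of the two terms on the right-hand side of Lemma~\ref{Lem:quenched tail estimate for single trap - lazy} against $\Prmp(\ell_i = m)$ and sum over $m \geq m_0 \vee m_1$.

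For the geometric-sum term, set $N \defeq \lfloor n/(4 r_\lambda R)\rfloor$. A union bound gives
\begin{equation*}
P_{m,\lambda}\bigg(\sum_{j=1}^l Z_j \geq N\bigg) \;\leq\; l\,(1-\mathsf{e}_m)^{\lceil N/l \rceil}.
\end{equation*}
From \eqref{eq:Gambler's ruin}, $\mathsf{e}_m$ is of order $e^{-2\lambda m}$ for large $m$. I would split the sum $\sum_m \Prmp(\ell_i = m)\,l(1-\mathsf{e}_m)^{N/l}$ at the scale $m^* \defeq \lfloor \log(N/l)/(2\lambda)\rfloor$, at which $\mathsf{e}_m N/l$ transitions from large to $O(1)$. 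For $m > m^*$, bounding $(1-\mathsf{e}_m)^{N/l} \leq 1$ and using $\Prmp(\ell_i = m) \leq C e^{-2\lambdacrit m}$ (Lemma~\ref{Lem:trap length distribution}(a)) yields, since $\alpha = \lambdacrit/\lambda$,
\begin{equation*}
l \sum_{m > m^*} \Prmp(\ell_i = m) \;\leq\; C\,l\,e^{-2\lambdacrit m^*} \;\leq\; C'\,l^{1+\alpha} n^{-\alpha}.
\end{equation*}
For $m \leq m^*$, the bound $(1-\mathsf{e}_m)^{N/l} \leq \exp(-c \mathsf{e}_m N/l)$ gives a decay that outweighs the growth of $e^{-2\lambdacrit m}$, and summation (or the integral comparison $\int_0^\infty u^{\alpha-1} e^{-cuN/l}\,du = C(l/N)^\alpha$) yields a contribution of the same order $l^{1+\alpha} n^{-\alpha}$. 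For $i = 0$, Lemma~\ref{Lem:trap length distribution}(b) replaces $e^{-2\lambdacrit m}$ by $m\,e^{-2\lambdacrit m}$, and the extra $m$ inflates the dominant contribution by the factor $m^* \asymp \log n$, producing the extra logarithmic factor.

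For the three passage-time terms, I would establish uniform-in-$m$ exponential bounds of the form
\begin{equation*}
P_{m,\lambda}^{\cdot}\big(\,\cdot \geq t,\;\text{event}\,\big) \;\leq\; C\,\mathsf{e}_m^{a}\,e^{-\mu(t - C'm)}
\end{equation*}
for the $\mu$ of the statement, with $a \in \{0,1\}$ matching the $\mathsf{e}_m$-probability of the conditioning event. These follow from the exponential moments of $\sigma_1^{\Z}$ provided by Lemma~\ref{Lem:moments of sigma_1^Z}, combined with a reversal-of-bias comparison that realises each conditioned passage time as a sum of at most $\mathcal{O}(m)$ i.i.d.\ hitting-time variables on $\Z$ against the drift. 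After averaging with $\Prmp(\ell_i = m) \leq C e^{-2\lambdacrit m}$, the factor $e^{\mu C' m}$ is absorbed into $e^{-(2\lambdacrit + 2\lambda)m}$ for $\mu$ small enough, and the geometric sum over $m$ produces a term of order $e^{-\mu t}$. With $t = n/(6 l r_\lambda)$ and the $3l$ prefactor from Lemma~\ref{Lem:quenched tail estimate for single trap - lazy}, this yields the $c_2\,l\,e^{-\mu n/(6 l r_\lambda)}$ contribution (and $c_2'$ for $i = 0$).

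The main obstacle is the geometric-sum estimate: extracting the precise $(l/N)^\alpha$ scaling with the correct power of $l$, and picking up the $\log n$ factor in the $i = 0$ case via the atypical mass of $\ell_0$ near the critical length $m^*$. The passage-time bounds are cleaner conceptually but still require a uniform-in-$m$ treatment of three distinct conditioned hitting-time functionals.
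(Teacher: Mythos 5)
Your proposal is correct and shares the paper's skeleton --- condition on $\ell_i=m$, apply Lemma \ref{Lem:quenched tail estimate for single trap - lazy}, and average against the trap-length law of Lemma \ref{Lem:trap length distribution} --- but it treats the dominant geometric-sum term by a genuinely different, more elementary route. The paper evaluates $\sum_m \alpha_i(m)e^{-2\lambdacrit m}(1-\mathsf{e}_m)^{n_0}$ essentially exactly: after bounding $\mathsf{e}_m\geq(\p-\q)\gamma^m$, it expands the power binomially and applies Rice's-integral residue calculus (Flajolet--Sedgewick), which yields the $n_0^{-\alpha}$ (resp.\ $n_0^{-\alpha}\log n_0$) scaling together with the oscillatory factors $n_0^{2\pi\imag k/\log\gamma}$ that the paper later invokes to explain the expected lack of regular variation. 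Your splitting of the sum over trap lengths at the critical scale $m^*\asymp\log(N/l)/(2\lambda)$, with $(1-\mathsf{e}_m)^{N/l}\leq 1$ above $m^*$ and $(1-\mathsf{e}_m)^{N/l}\leq e^{-c\,\mathsf{e}_m N/l}$ below, gives the same upper bound $l^{1+\alpha}n^{-\alpha}$, and the extra factor $m\leq m^*\lesssim\log n$ from Lemma \ref{Lem:trap length distribution}(b) correctly produces the $\log n$ for $i=0$; this is entirely adequate for the lemma as stated, though it forgoes the sharper asymptotics the paper obtains as a by-product. For the passage-time terms you follow the paper's idea: there the domination of all three conditioned passage times by $P_{\Z,\lambda}^0(\sigma_m^\Z\geq\cdot)$ is actually proved via the Gambler's-ruin $h$-transform computation (the conditioned walk drifts towards its target at least as strongly as the free walk drifts right), followed by Chernoff with $E_{\Z,\lambda}^0[e^{\mu\sigma_1^\Z}]^m$; you assert this comparison rather than prove it, and you absorb the factor $E_{\Z,\lambda}^0[e^{\mu\sigma_1^\Z}]^m$ into $e^{-2\lambdacrit m}$ only ``for $\mu$ small enough'', whereas the paper checks, using the explicit formula of Lemma \ref{Lem:moments of sigma_1^Z} (and the comparison with the $\lambdacrit$-walk when $\lambda\geq\lambdacrit$), that the series converges for \emph{every} $\mu$ with finite exponential moment, which is what the statement claims; this restriction, like the trivial edge case $l\gtrsim n$ where $m^*$ degenerates but the asserted bound exceeds one, is harmless for the downstream application but worth flagging.
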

\begin{proof}
Using Lemmas \ref{Lem:trap length distribution} and \ref{Lem:quenched tail estimate for single trap - lazy},
we can estimate $\Prob\big( \sum_{j=1}^l T^{\mathrm{ann}}_{ij} \geq n, \ell_i \geq m_0 \vee m_1 \big)$
using independent $Z_1,...,Z_l \sim \text{geom}(\mathsf{e}_m)$ and $T_{ij}^\mathrm{qu}$,
$j=1,\ldots,l$, $r_\lambda$ and $R$ as defined in Lemma \ref{Lem:quenched tail estimate for single trap - lazy} by
\begin{align}\notag
\Prob&\bigg( \sum_{j=1}^l T^{\mathrm{ann}}_{ij} \geq n, \ell_i \geq m_0 \vee m_1 \bigg)
=\sum_{m=m_0\vee m_1}^\infty \Prmp( \ell_i = m ) P_{m,\lambda}\bigg(
\sum_{j=1}^l T_{ij}^\mathrm{qu} \geq n \bigg) \\\notag
&\leq 3 \sum_{m=m_0\vee m_1}^\infty \alpha_i(m) e^{-2\lambdacrit m} 
P_{m,\lambda}\bigg( \sum_{j=1}^l Z_j \geq \frac{n}{4r_\lambda R} \bigg) \\\notag
&\hphantom{\leq} + 3l \!\!\! \sum_{m=m_0\vee m_1}^\infty \!\!\! \alpha_i(m) e^{-2\lambdacrit m} \max\big\{ 
P_{m,\lambda}^1\big( \sigma_{1\to0} \geq \tfrac{n}{6lr_\lambda}, \sigma_0 < \sigma_m \big), P_{m,\lambda}^1\big( \sigma_{1\to m} \geq \tfrac{n}{6lr_\lambda}, \sigma_m < \sigma_0 \big), \\
&\hphantom{\leq + 3l \!\!\! \sum_{m=m_0\vee m_1}^\infty \!\!\! \alpha_i(m) e^{-2\lambdacrit m} \max\big\{}
P_{m,\lambda}^m\big( \sigma_{m\to0} \geq \tfrac{n}{6lr_\lambda}, \sigma_0 < \sigma_m^{+} \big)
\big\},		\label{eqn:application of quenched estimate}
\end{align}
where $\alpha_i(m) \defeq (e^{2\lambdacrit}-1)$ for $i\not = 0$ and $\alpha_0(m) \defeq \chi(p)m$.
We consider the second series first. For $y \in \{0,...,m\}$ we write
\begin{equation*} 
		h(y)	\defeq	P_{m,\lambda}^y( \sigma_0 < \sigma_m ).				
\end{equation*}
Due to the Gambler's ruin formula we have $h(y) = \frac{ \gamma^y -
\gamma^m }{ 1 - \gamma^m }$.
An excursion of $(S_n)_{n\in\N_0}$ starting from either 1 or $m$ to the origin 0 conditioned on $\sigma_0 < \sigma_m^+$ has the transition probabilities
\begin{equation*} 
P_{m,\lambda}^y( S_1 = z | \sigma_0 < \sigma_m^+ ) = \frac{h(z)}{h(y)} p(y,z),		
\end{equation*}
where $y\in \{1,...,m-1\}$, $z\in\{0,...,m\}$ and $p(y,z) \defeq
P_{m,\lambda}^y(S_1 = z)$. For $y\in\{1,...,m-1\}$ this implies
\begin{equation*} 
\frac{ P_{m,\lambda}^y( S_1 = y + 1 | \sigma_0 < \sigma_m^+ ) }
{ P_{m,\lambda}^y( S_1 = y - 1 | \sigma_0 < \sigma_m^+ ) }
= \frac{ h(y+1) }{ h(y-1) } \frac{p(y,y+1) }{p(y,y-1) }
< \gamma,
\end{equation*}
whereas
\begin{equation*} 
\frac{ P_{m,\lambda}^m( S_1 = m | \sigma_0 < \sigma_m^+ ) }{ P_{m,\lambda}^m( S_1 = m-1 | \sigma_0 < \sigma_m^+ ) }
= 0 
< \gamma.	
\end{equation*}
In other words, conditioned on $\sigma_0 < \sigma_m^+$, the walk
$(S_n)_{n\in\N_0}$ drifts towards to the left at least as strong as the unconditioned walk drifts towards the right. Estimating all three quantities in the
$\max$-term by corresponding quantities for $(S'_n)_{n\in\N_0}$, the
biased random walk on $\Z$, we get
\begin{align*}
\max& \big\{ 
P_{m,\lambda}^1\big( \sigma_{1\to0} \geq \tfrac{n}{6lr_\lambda}, \sigma_0 < \sigma_m \big),
P_{m,\lambda}^1\big( \sigma_{1\to m} \geq \tfrac{n}{6lr_\lambda}, \sigma_m < \sigma_0 \big),
P_{m,\lambda}^m\big( \sigma_{m\to0} \geq \tfrac{n}{6lr_\lambda}, \sigma_0 < \sigma_m^{+} \big)
\big\} \\
&\leq \max\big\{ 
P_{\Z,\lambda}^0 \big( \sigma_1^\Z \geq \tfrac{n}{6lr_\lambda} \big),
P_{\Z,\lambda}^1\big( \sigma_{m}^\Z \geq \tfrac{n}{6lr_\lambda} \big),
P_{\Z,\lambda}^0\big( \sigma_{m}^\Z \geq \tfrac{n}{6lr_\lambda} \big)
\big\} \\
&=	P_{\Z,\lambda}^0\big( \sigma_{m}^\Z \geq \tfrac{n}{6lr_\lambda} \big).
\end{align*}
Using Markov's inequality and Lemma \ref{Lem:moments of sigma_1^Z}, we get
that for $\mu>0$ with $E_{\Z,\lambda}^0\big[e^{\mu \sigma_1^\Z} \big] <
\infty$,
\begin{align*}
3l  \!\!\!\sum_{m=m_0\vee m_1}^\infty & \!\!\! \alpha_i(m)
e^{-2\lambdacrit m} P_{\Z,\lambda}^0\big( \sigma_{m}^\Z \geq \tfrac{n}{6lr_\lambda} \big) \\
&\leq 3 l e^{ -\mu\frac{n}{6lr_\lambda}} \!\!\! \sum_{m=m_0\vee m_1}^\infty \!\!\! \alpha_i(m) e^{-2\lambdacrit m} E_{\Z,\lambda}^0\big[e^{\mu\sigma_1^\Z}\big]^{m}	\\
&= 3 l e^{ -\mu\frac{n}{6lr_\lambda}} \!\!\! \sum_{m=m_0\vee m_1}^\infty \!\!\! \alpha_i(m) e^{-2\lambdacrit m} \bigg( \frac{ 1-\sqrt{ 1-4\p\q e^{2\mu} } }{ 2\q e^{\mu} } \bigg)^{\!m}.
\end{align*}
The latter series is finite. To see this, notice that if $\lambda < \lambdacrit$, we have $e^{-2\lambdacrit} < e^{-2\lambda} = \frac{\q}{\p}$
and thus
\begin{equation*} 
e^{-2\lambdacrit} \frac{ 1-\sqrt{ 1-4\p\q e^{2\mu} } }{ 2\q e^{\mu} } < 1
\end{equation*}
as $1 - \sqrt{1 - 4\p\q e^{2\mu}} \leq 1$ and $2\p e^{\mu} >1$. If on the
other hand $\lambda \geq \lambdacrit$,
we have $E_{\Z,\lambda}^0[e^{\mu\sigma_1^\Z}] \leq
E_{\Z,\lambdacrit}^0[e^{\mu\sigma_1^\Z}]$
and the series converges using the same argument.

\noindent
For the first series on the right-hand side of \eqref{eqn:application of quenched estimate}, we use the union bound to get 
\begin{align*}
3 \!\! \sum_{m=m_0\vee m_1}^\infty \!\! \alpha_i(m) e^{-2\lambdacrit m} 
P_{m,\lambda}\bigg( \sum_{j=1}^l Z_j \geq \frac{n}{4r_\lambda R} \bigg)
&\leq 3l \!\! \sum_{m=m_0\vee m_1}^\infty \!\! \alpha_i(m) e^{-2\lambdacrit m} P_{m,\lambda}\bigg( Z_1 \geq \frac{n}{4r_\lambda Rl} \bigg) \\
&= 3l \!\! \sum_{m=m_0\vee m_1}^\infty \!\! \alpha_i(m) e^{-2\lambdacrit m} (1-\mathsf{e}_m)^{\lceil \frac{n}{4r_\lambda Rl}\rceil}.
\end{align*}
We set $n_0 \defeq \lceil\frac{n}{4r_\lambda Rl}\rceil$. Since $\mathsf{e}_m \geq (\p-\q)\gamma^m$, we have 
\begin{equation*}
3l \sum_{m=m_0\vee m_1}^\infty \!\! \alpha_i(m) e^{-2\lambdacrit m} (1-\mathsf{e}_m)^{n_0} 
\leq 3l \sum_{m=m_0\vee m_1}^\infty \!\! \alpha_i(m) e^{-2\lambdacrit m} (1-(\p-\q)\gamma^m)^{n_0}.
\end{equation*} 
Let $t\in\N_0$ be such that $(\p-\q)\gamma^{-t} \leq 1< (\p-\q)\gamma^{-(t+1)}$. Then 
\begin{align*}
3l \sum_{m=m_0\vee m_1}^\infty \alpha_i(m) &e^{-2\lambdacrit m} (1-(\p-\q)\gamma^m)^{n_0} \\
&\leq 3l e^{2\lambdacrit t} \sum_{m=m_0\vee m_1}^\infty \alpha_i(m+t) e^{-2\lambdacrit(m+t)} \Big(1-\frac{\p-\q}{\gamma^t}\gamma^{m+t}\Big)^{n_0} \\
&\leq 3l e^{2\lambdacrit t} \sum_{m=0}^\infty \alpha_i(m) e^{-2\lambdacrit m} \Big(1-\frac{\p-\q}{\gamma^t}\gamma^m\Big)^{n_0} \\
&= 3l e^{2\lambdacrit t} \sum_{m=0}^\infty \alpha_i(m) e^{-2\lambdacrit m} \sum_{j=0}^{n_0} \binom{n_0}{j} \Big(-\frac{\p-\q}{\gamma^t}\gamma^m\Big)^j 1^{n_0-j} \\
&= 3l e^{2\lambdacrit t} \sum_{j=0}^{n_0} \binom{n_0}{j}(-1)^j \Big(\frac{\p-\q}{\gamma^t}\Big)^j \sum_{m=0}^{\infty} \alpha_i(m) \gamma^{\alpha m+jm} \\
&= \begin{cases} 3l e^{2\lambdacrit t} (e^{2\lambdacrit}-1) \sum_{j=0}^{n_0} \binom{n_0}{j} (-1)^j \big(\frac{\p-\q}{\gamma^t}\big)^j \frac{1}{1-\gamma^{\alpha+j}} &\text{if $i\neq 0$},\\
3l e^{2\lambdacrit t} \chi(p) \sum_{j=0}^{n_0} \binom{n_0}{j} (-1)^j \big(\frac{\p-\q}{\gamma^t}\big)^j \frac{\gamma^{\alpha+j}}{(1-\gamma^{\alpha+j})^2}&\text{if $i=0$}.
\end{cases}
\end{align*}
To find the asymptotic behavior of the two expressions in the Lemma, we apply residue calculus.
Define the complex function $\phi$ via $\phi(z) \defeq \frac{(\p-\q)^z}{\gamma^{tz}(1-\gamma^{\alpha+z})}$ for $z \in \C$.
Then $\phi$ is holomorphic in $\C$ except at the poles $z_k \defeq \frac{2 \pi \imag k}{\log\gamma}-\alpha$, $k\in\Z$.
Moreover, by the choice of $t$, $|\phi(z)|$ remains bounded as $|\Real(z)| \to \infty$. Consequently, Theorem 2(i) in \cite{Flajolet+Sedgewick:1995} applies and gives
\begin{equation*}
\sum_{j=0}^{n_0} \binom{n_0}{j} (-1)^j \Big(\frac{\p-\q}{\gamma^{t}}\Big)^j \frac{1}{1-\gamma^{\alpha+j}}
= (-1)^{n_0+1} \sum_{k\in\Z} \underset{z=z_k}{\text{Res}} \Big( \frac{1}{1-\gamma^{\alpha+z}} \frac{(\p-\q)^z n_0! }{\gamma^{tz}z(z-1)\ldots(z-n_0)} \Big).
\end{equation*}
Along the lines of Example 3 in \cite{Flajolet+Sedgewick:1995}, we get 
\begin{align}\notag
\sum_{j=0}^{n_0} \binom{n_0}{j} (-1)^j&\Big(\frac{\p-\q}{\gamma^{t}}\Big)^j \frac{1}{1-\gamma^{\alpha+j}}
= \Big( -\frac{1}{\log\gamma} \Big) \sum_{k\in\Z} \frac{\Gamma(n_0+1)}{\Gamma(n_0+1-z_k)} \Gamma(-z_k) \Big(\frac{\p-\q}{\gamma^{t}}\Big)^{\!z_k} \\\notag
&= \frac{1}{2\lambda} n_0^{-\alpha} \sum_{k\in\Z} n_0^{\frac{2 \pi \imag k}{\log\gamma}} \frac{\Gamma(n_0+1)n_0^{-z_k}}{\Gamma(n_0+1-z_k)} \Gamma(-z_k) \Big(\frac{\p-\q}{\gamma^{t}}\Big)^{z_k} \\\notag
&\leq \frac{\gamma^{t\alpha}(4r_\lambda R)^\alpha l^\alpha}{2\lambda(\p-\q)^\alpha} n^{-\alpha} \sum_{k\in\Z} e^{2 \pi \imag k (\log_\gamma((\p-\q)n_0)-t)} \frac{\Gamma(n_0+1)n_0^{-z_k}}{\Gamma(n_0+1-z_k)} \Gamma(-z_k) \\
&= \frac{e^{-2\lambdacrit t}(4r_\lambda R)^\alpha l^\alpha}{2\lambda(\p-\q)^\alpha} n^{-\alpha} \sum_{k\in\Z} e^{2 \pi \imag k \log_\gamma((\p-\q)n_0)} \frac{\Gamma(n_0+1)n_0^{-z_k}}{\Gamma(n_0+1-z_k)} \Gamma(-z_k)\label{eqn:residue_ineq0}
\end{align}
where $\Gamma$ is the complex gamma function.
From Stirling's formula, e.\,g.\ \cite[Theorem 1.4.2]{Andrews+Askey+Roy:1999}, we know that
\begin{equation*}
\log \Gamma(z) = \frac12 \log(2\pi) + \Big(z-\frac12\Big) \log z - z + R(z)
\end{equation*}
for $z\in\C \setminus (-\infty,0]$ where $\log$ is the branch of the complex logarithm, defined on $\C \setminus (-\infty,0]$,
with $\log x \in \R$ for all $x > 0$ and where $R(z)$ satisfies $|R(z)| \leq \frac{c}{|z|}$ for some constant $c>0$.
Hence,
\begin{align*}
\frac{\Gamma(n_0\!+\!1)n_0^{-z_k}}{\Gamma(n_0\!+\!1\!-\!z_k)}
&=
\exp\bigg(\big(n_0\!+\!\tfrac12\big) \log (n_0\!+\!1) - (n_0\!+\!1) + R(n_0\!+\!1)	 - z_k \log n_0 \\
&\hphantom{=\exp\bigg(}
- \Big(\big(n_0\!+\!\tfrac12\!-\!z_k\big) \log (n_0\!+\!1\!-\!z_k) - (n_0\!+\!1\!-\!z_k) + R(n_0\!+\!1\!-\!z_k)\Big) \bigg)	\\
&=
\Big(\frac{n_0\!+\!1\!-\!z_k}{n_0}\Big)^{\!z_k}
\Big(\frac{n_0\!+\!1}{n_0\!+\!1\!-\!z_k}\Big)^{\!n_0\!+\!\tfrac12}
e^{-z_k} e^{R(n_0\!+\!1)-R(n_0\!+\!1\!-\!z_k)}.
\end{align*}
In this product, the first and second factors are bounded in absolute value by $1$, the third by $e^\alpha$, and the fourth by $e^{2c}$.
Using Corollary 1.4.4 in \cite{Andrews+Askey+Roy:1999}, we conclude that $|\Gamma(-z_k)| \to 0$ exponentially fast as $|k| \to \infty$
and that the bi-infinite series in \eqref{eqn:residue_ineq0} is finite and can be bounded by a finite constant $c_1$  that neither depends on $n$ nor $l$.

\noindent
For $i=0$, we again use Theorem 2(i) in \cite{Flajolet+Sedgewick:1995} and find 
\begin{align*}
\sum_{j=0}^{n_0} \! \binom{n_0}{j} (-1)^j \Big(\frac{\p\!-\!\q}{\gamma^{t}}\Big)^j \frac{\gamma^{\alpha+j}}{(1\!-\!\gamma^{\alpha+j})^2}
= (-1)^{n_0+1} \sum_{k\in\Z} \underset{z=z_k}{\text{Res}} \Big(\frac{\gamma^{\alpha+z}}{(1\!-\!\gamma^{\alpha+z})^2} \frac{(\p\!-\!\q)^z n_0!}{\gamma^{tz}z(z\!-\!1)\ldots(z\!-\!n_0)} \Big)
\end{align*}
with $z_k = \frac{2 \pi \imag k}{\log\gamma}-\alpha$ as above. Evaluating the residues leads to
\begin{align*}
&\sum_{j=0}^{n_0} \binom{n_0}{j} (-1)^j \Big(\frac{\p\!-\!\q}{\gamma^{t}}\Big)^j \frac{\gamma^{\alpha+j}}{(1-\gamma^{\alpha+j})^2} \\
&= (-1)^{n_0+1} \sum_{k\in\Z} \frac{n_0!}{(\log\gamma)^2 z_k(z_k-1)\ldots(z_k-n_0)} \Big( \log\!\Big(\frac{\p\!-\!\q}{\gamma^t}\Big) + \sum_{j=0}^{n_0} \frac{1}{j\!-\!z_k} \Big)\Big(\frac{\p\!-\!\q}{\gamma^{t}}\Big)^{z_k} \\
&= \frac{e^{-2\lambdacrit t}}{4\lambda^2(\p\!-\!\q)^\alpha} n_0^{-\alpha} \sum_{k\in\Z} e^{2 \pi \imag k\log_\gamma((\p-\q)n_0)}
\frac{\Gamma(n_0\!+\!1)n_0^{-z_k}}{\Gamma(n_0\!+\!1\!-\!z_k)} \Gamma(-z_k) \Big( \log\!\Big(\frac{\p\!-\!\q}{\gamma^t}\Big) +\sum_{j=0}^{n_0} \frac{1}{j\!-\!z_k} \Big) \\
&\leq \frac{e^{-2\lambdacrit t}(4r_\lambda R)^\alpha}{4\lambda^2(\p\!-\!\q)^\alpha} l^\alpha n^{-\alpha} \sum_{k\in\Z} e^{2 \pi \imag k\log_\gamma((\p-\q)n_0)}
\frac{\Gamma(n_0\!+\!1)n_0^{-z_k}}{\Gamma(n_0\!+\!1\!-\!z_k)} \Gamma(-z_k) \sum_{j=0}^{n_0} \frac{1}{j\!-\!z_k}.
\end{align*}
Along the same lines as above, we can show that this bi-infinite series has finite value and the whole term can be bounded by $c_1' l^\alpha n^{-\alpha}\log n$, where $c_1' \in (0,\infty)$ does not depend on $n$ or $l$.
\end{proof}

\subsection{A coupling} \label{section:coupling}

As the times spent in different traps
are not independent, further work is needed to transfer the tail
estimate for the time
spent in a single trap to the time spent in the possibly several traps
inside a block $[\rho_i, \rho_{i+1})$.
Therefore,
we introduce a random walk on a subgraph $\omegaprune$ of the initial
environment $\omega$ as follows.
We take the initial graph $\omega$ sampled according to $\Prmp$ or
$\Prmp^\circ$	
and modify it as follows. For each trap
$P = (e_1,\ldots,e_m)$ in $\omega$
with trap entrance $u_0$
and edges $e_1 = \< u_0,u_1 \>,\ldots,e_m = \<u_{m-1},u_m\>$,
we delete the edges $e_1,\ldots,e_m$ from $\omega$ and also the vertices
$u_1,\ldots,u_m$.
We further delete the opposite vertices $u_1',\ldots,u_m'$
and replace the parallel edges $e_1',\ldots,e_m',\<u_m',u_m'+(1,0)\>$
with a single edge connecting $u_0'$ and $u_m'+(1,0)$ with resistance
given by the sum of the resistances of the single edges.
We shall call the vertex $u_0'$ opposite the former trap entrance an
\emph{obstacle}. Should this procedure lead to the deletion of $\bfnull$,
we assign $\x$-coordinate $0$ in $\omegaprune$ to the obstacle that
replaced the trap piece which contained $\bfnull$ in $\omega$.
In this way, we also obtain new conductances $c^\mathrm{s}$ on $\omegaprune$.

\begin{figure}[H] 
\begin{center}
\begin{tikzpicture}[thin, scale=0.6,-,
                   shorten >=0pt+0.5*\pgflinewidth,
                   shorten <=0pt+0.5*\pgflinewidth,
                   every node/.style={circle, draw, fill = black, inner
sep = 0pt, minimum width =4 pt}
                   ]

\foreach \x in {20,21,22,23,24,25,26}{
\foreach \y in {0,1}{
    \node at (\x,\y) {};
}}
\node[rectangle, draw, fill = black, minimum width = 4pt, minimum
height=4pt] at (22,1) {};
\node[rectangle, draw, fill = black, minimum width = 4pt, minimum
height=4pt] at (25,1) {};
\foreach \x in {20,21,22,24,25}{
\foreach \y in {1}{
    \ifdim \x pt < -2 pt \draw (\x,\y) -- (\x+1,\y);\fi
    \ifdim \x pt > -2 pt \draw (\x,\y) -- (\x+1,\y);\fi
  }}
\foreach \x in {21,23}{
	\draw (\x,0) -- (\x+1,0);
	}
\foreach \x in {21,22,23,24,25}{
	\draw (\x,0) -- (\x,1);
	}
\draw[densely dotted] (19.5,1) -- (20,1);
\draw[densely dotted] (26,1) -- (26.5,1);

\foreach \x in {5,6,7,8,9,10,11,12,13,14,15}
\foreach \y in {0,1}
    \node at (\x,\y) {};
\foreach \x in {5,6,7,8,10,11,12,13,14}{
\foreach \y in {1}{
    \ifdim \x pt < -1 pt \draw (\x,\y) -- (\x+1,\y);\fi
    \ifdim \x pt > -1 pt \draw (\x,\y) -- (\x+1,\y);\fi
  }}
\foreach \x in {6,7,9,11,12,13}{
\foreach \y in {0}{
	\draw (\x,\y) -- (\x+1,\y);
 }}
\foreach \x in {6,7,9,10,11}{
	\draw (\x,0) -- (\x,1);
	}
\draw[densely dotted] (4.5,1) -- (5,1);
\draw[densely dotted] (15,1) -- (15.5,1);

\node[draw=none,fill=none] at (6,-0.5) {$\mathbf{0}$};
\node[draw=none,fill=none] at (9,1.5) {$(3,\!1)$};
\node[draw=none,fill=none] at (21,-0.5) {$\mathbf{0}$};
\node[draw=none,fill=none] at (23,1.5) {$(2,\!1)$};
\end{tikzpicture}
\end{center}
\caption{Comparison of $\omega$ (left) and the resulting $\omegaprune$
(right). Normal vertices are drawn as filled circles, the obstacles as
filled boxes.}
\end{figure}
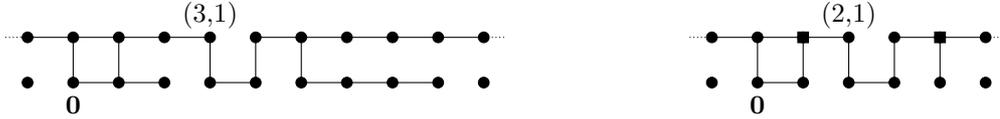
By the series law, the corresponding resistances $r^\mathrm{s}$ between
the first obstacle $v$ to the right of $\bfnull$
that replaces a trap piece covering $\x$-level $k$ to $k+m+1$ and its
neighbors $u$ to the left and $w$ to the right satisfy
\begin{equation*}
r^\mathrm{s}( \<u,v\> ) = r( \<u,v\> ) = e^{-\lambda(\x(u) + \x(v))} =
e^{-\lambda(2k-1)}
\end{equation*}
and
\begin{align*}
r^\mathrm{s}( \<v,w\> )
&= \sum_{j=k}^{k+m} r( \<j,\y(v)\> , \<j+1,\y(v)\> ) = \sum_{j=k}^{k+m}
e^{-\lambda(2j+1)}
= e^{-\lambda(2k+1)} \frac{1 - e^{-2\lambda(m+1)} }{ 1 - e^{-2\lambda } }.
\end{align*}
Based on this, we define the \emph{pruned random walk} as the lazy random
walk $(Y_n^\mathrm{p})_{n\in\N_0}$ on $\omegaprune$
with transition probabilities proportional to the conductances
\begin{equation*}
\cp( \<u,v\> )	=	e^{\lambda ( \x(u) + \x(v) ) } \cdot (1 -
e^{-2\lambda})^{p(v)}
\end{equation*}
where $\x(u) \leq \x(v)$ and $p(v)$ is the number of obstacles with
$\x$-coordinate $\in [0,\x(v))$.
More precisely, if $Y_n^\mathrm{p}=u$, then the walk attempts to step from
$u$ to $v$ with probability proportional to $\cp( \<u,v\> )$.
If the edge between $u$ and $v$ is present in $\omegaprune$, then the step
is actually performed,
otherwise the walk stays put.

Roughly speaking, $(Y_n^\mathrm{p})_{n\in\N_0}$ is the lazy random walk on
the non-trap pieces of $\omega$ when all traps are set to have infinite
length.
Intuitively, as the traps in $\omega$ have finite lengths,
the embedding of $(Y_n^\mathrm{p})_{n\in\N_0}$ into $\omega$ will lag
behind the random walk $(Y_n)_{n\in\N_0}$.
Regenerations of $(Y_n^\mathrm{p})_{n\in\N_0}$ also amount to
regenerations of $(Y_n)_{n\in\N_0}$
without implications on the lengths of the traps in the underlying piece
of $\omega$.
Furthermore, $(Y_n^\mathrm{p})_{n\in\N_0}$ can be used to bound the number
of visits to any trap by a quantity independent of the trap lengths,
thus greatly reducing the difficulties in transforming the estimate of Lemma
\ref{Lem:annealed tail estimate for single trap}
to an estimate for the time spent in the whole block $[\rho_i,\rho_{i+1})$ in $\omega$.
To make this precise, we give a coupling of $(Y_n^\mathrm{p})_{n\in\N_0}$
and $(Y_n)_{n\in\N_0}$ with the described properties.
Technically, the coupling is such that we obtain processes with the same
distributions as $(Y_n)_{n\in\N_0}$ and $(Y_n^\mathrm{p})_{n\in\N_0}$ and
the desired properties,
but we shall again refer to them as $(Y_n)_{n\in\N_0}$ and
$(Y_n^\mathrm{p})_{n\in\N_0}$, respectively, once equality of the
corresponding laws is established.

First, let $(O_i)_{i\in\Z}$ be an enumeration of the obstacles in
$\omegaprune$ such that $\ldots < \x(O_{-1}) < 0 \leq \x(O_0) < \x(O_2) <
\ldots$.
Starting from $\omegaprune$, take an independent family $(L_i)_{i\in\Z}$
of random variables, with $(L_i)_{i \not = 0}$ independent of $\omega$.
We re-insert at $O_i$ a trap piece with a trap of length $L_i$.
Here, we let $L_i$ have the same distribution as $\ell_i$ for $i\not=0$.
For $i=0$, let
the law of $L_0$ given $\x(O_0) > 0$ be the law of $\ell_1$.
Further notice that if $\x(O_0) = 0$, then, by the definition of $T_0$ and $T_1$,
either $\bfnull$ is one of the two leftmost vertices in $T_1$ or $\bfnull \in \inner(T_0)$ which consists of all vertices from $T_0$
except the two leftmost and the two rightmost vertices. Thus, we define the law of $L_0$ given $\x(O_0) = 0$ by
\begin{equation*}
\Prmp(\bfnull \in T_1 \,|\, \bfnull \in T_1 \cup \inner(T_0)) \Prmp(\ell_1 \in \cdot) + \Prmp(\bfnull \in \inner(T_0) \, |\,  \bfnull \in T_1 \cup \inner(T_0)) \Prmp(\ell_0 \in \cdot \,|\,  \bfnull \in \inner(T_0)).
\end{equation*}
In other words, we toss a coin with probability $\Prmp(\bfnull \in T_1 \,|\, \bfnull \in T_1 \cup \inner(T_0))$ for heads.
If the coin comes up heads, we sample the value of $L_0$ using an independent copy of $\ell_1$ (under $\Prmp$).
If the coin comes up tails, we sample the value of $L_0$ using an independent copy of $\ell_0$
(under $\Prmp$ given that $\bfnull \in \inner(T_0)$, this random variable satisfies the bound in Lemma \ref{Lem:trap length
distribution}(b)).
Additionally, if the coin comes up tails, we shift horizontally by a value $k\in\{1,\ldots,L_0\}$
according to the distribution under $\Prmp$ of the position of $\bfnull$ in $T_0$ given $\bfnull \in \inner(T_0)$.
This gives a new configuration $\tilde{\omega}$.
By construction, $\tilde{\omega} \disteq \omega$.

Slightly abusing notation, we write $\omegaprune$ for both $\omegaprune$
and the subset of $\tilde{\omega}$ corresponding to it.
We further write $V(\omegaprune)$ and $V(\tilde{\omega})$ for the
corresponding vertex sets.
Consequently, we write $u=v$ for vertices $u\in V(\omegaprune)$,
$v\in V(\tilde{\omega})$ if $v$ is the node in $\tilde{\omega}$ corresponding
to $u$ in $\omegaprune$.
Given $\omegaprune$ and $\tilde{\omega}$, we define a random walk
$(\mathcal{Y}_n)_{n\in\N_0}$ on $V(\omegaprune) \times V(\tilde{\omega})
\times \{-1,0,1\}$,
where the first and second component (up to random waiting times) behave like
$(Y_n^\mathrm{p})_{n\in\N_0}$ and $(Y_n)_{n\in\N_0}$, respectively,
and the third component exclusively acts as a memory of the directions
taken at certain nodes.
This is to ensure that $(\mathcal{Y}_n)_{n\in\N_0}$ is a Markov chain.

At each time $n\in\N_0$, first a candidate $\mathcal{Y}_{n+1}^\text{cand}
=
(\mathcal{Y}_{n+1,1}^\text{cand},\mathcal{Y}_{n+1,2}^\text{cand},\mathcal{Y}_{n+1,3}^\text{cand})$
for the next step is chosen and afterwards the chosen step is taken only
if the corresponding edges in $\omegaprune$ or $\tilde{\omega}$,
respectively, are open:
\begin{equation*}
\mathcal{Y}_{n+1,1} = 	\begin{cases}
					\mathcal{Y}^\text{cand}_{n+1,1}	&\text{if } \omegaprune(\<\mathcal{Y}_{n,1}, \mathcal{Y}^\text{cand}_{n+1,1}\>) =1,	\\
					\mathcal{Y}_{n,1}				&\text{otherwise},
					\end{cases} \quad
\mathcal{Y}_{n+1,2} = 	\begin{cases}
					\mathcal{Y}^\text{cand}_{n+1,2}	&\text{if } \tilde{\omega}(\<\mathcal{Y}_{n,1}\mathcal{Y}^\text{cand}_{n+1,1}\>) = 1, \\
					\mathcal{Y}_{n,2}				&\text{otherwise}
					\end{cases}
\end{equation*}
and $\mathcal{Y}_{n+1,3} = \mathcal{Y}^\text{cand}_{n+1,3}$.

We start at $\mathcal{Y}_0 = (\bfnull, \bfnull, 0)$ and give the
transition matrix of $(\mathcal{Y}_n)_{n\in\N_0}$ in a case-by-case
description depending on the position
$(u,v,w) \in V(\omegaprune) \times V(\tilde{\omega}) \times \{-1,0,1\}$ at
time $n$.

(1) If $u=v$ when regarding $\omegaprune$ as a subset of $\tilde{\omega}$,
and if $u \not= O_i$ for all $i\in\Z$, we let $(\mathcal{Y}_n)_{n\in\N_0}$
attempt to do exactly the same steps in its first two components.
In that case
\begin{equation*}
\mathcal{Y}^\text{cand}_{n+1} =	\begin{cases}
							(u+(1,0), v+(1,0), 0)	&	\text{with probability }
\frac{e^\lambda}{e^\lambda+1+e^{-\lambda}},		\\
							(u-(1,0), v-(1,0), 0)	&	\text{with probability }
\frac{e^{-\lambda}}{e^\lambda+1+e^{-\lambda}},	\\
							(u', v', 0)			&	\text{with probability }
\frac{1}{e^\lambda+1+e^{-\lambda}}.
							\end{cases}
\end{equation*}
Note that if $v$ is a trap entrance in $\tilde \omega$,
a step to the right by $(\mathcal{Y}_{n+1,1}^\mathrm{cand},
\mathcal{Y}_{n+1,2}^\mathrm{cand})$ induces a lazy step of
$(\mathcal{Y}_{k,1})_{k\in\N_0}$
whereas $(\mathcal{Y}_{k,2})_{k\in\N_0}$ moves into the trap.
In that case, as will be described in detail below,
$(\mathcal{Y}_{k,2} )_{k\in\N_0}$ will make an excursion into the trap
afterwards
whereas $(\mathcal{Y}_{k,1} )_{k\in\N_0}$ will stay put in $u$ until
$(\mathcal{Y}_{k,2} )_{k\in\N_0}$ returns to the trap entrance $v$.
Similarly, when a step of $(\mathcal{Y}_{k,1} )_{k\in\N_0}$ to the left
means moving to an obstacle,
$(\mathcal{Y}_{k,2})_{k\in\N_0}$ will then step onto a backbone node in
$\tilde{\omega} \setminus \omegaprune$.
In this case $(\mathcal{Y}_{k,1})_{k \in \N_0}$ will also stay put until
$(\mathcal{Y}_{k,2})_{k \in \N_0}$
reaches a node in $\tilde{\omega} \cap \omegaprune$.
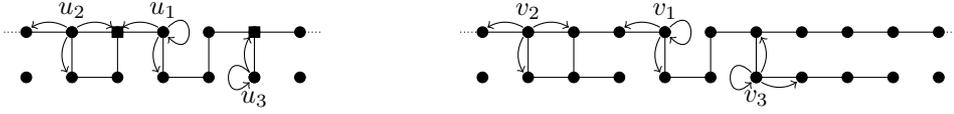
\begin{figure}[H]
\begin{center}
\begin{tikzpicture}[thin, scale=0.6,-,
                   shorten >=0pt+0.5*\pgflinewidth,
                   shorten <=0pt+0.5*\pgflinewidth,
                   every node/.style={circle, draw, fill = black, inner
sep = 0pt, minimum width =4 pt}
                   ]
\foreach \x in {-5,-4,-3,-2,-1,0,1}{
\foreach \y in {0,1}{
    \node at (\x,\y) {};
}}
\node[rectangle, draw, fill = black, minimum width = 4pt, minimum
height=4pt] at (-3,1) {};
\node[rectangle, draw, fill = black, minimum width = 4pt, minimum
height=4pt] at (0,1) {};
\foreach \x in {-5,-4,-3,-2,-1,0}{
\foreach \y in {1}{
    \ifdim \x pt < -2 pt \draw (\x,\y) -- (\x+1,\y);\fi
    \ifdim \x pt > -2 pt \draw (\x,\y) -- (\x+1,\y);\fi
  }}
\foreach \x in {-4,-2}{
	\draw (\x,0) -- (\x+1,0);
	}
\foreach \x in {-4,-3,-2,-1,0}{
	\draw (\x,0) -- (\x,1);
	}
\draw[densely dotted] (-5.5,1) -- (-5,1);
\draw[densely dotted] (1,1) -- (1.5,1);

\foreach \x in {5,6,7,8,9,10,11,12,13,14,15}
\foreach \y in {0,1}
    \node at (\x,\y) {};
\foreach \x in {5,6,7,8,10,11,12,13,14}{
\foreach \y in {1}{
    \ifdim \x pt < -1 pt \draw (\x,\y) -- (\x+1,\y);\fi
    \ifdim \x pt > -1 pt \draw (\x,\y) -- (\x+1,\y);\fi
  }}
\foreach \x in {6,7,9,11,12,13}{
\foreach \y in {0}{
	\draw (\x,\y) -- (\x+1,\y);
 }}
\foreach \x in {6,7,9,10,11}{
	\draw (\x,0) -- (\x,1);
	}
\draw[densely dotted] (4.5,1) -- (5,1);
\draw[densely dotted] (15,1) -- (15.5,1);

\node[label=above:$u_1$] at (-2,1) {};
\draw [->] (-2.1,1.1) to[bend right] (-2.9,1.1);
\draw [->] (-2.1,0.9) to[bend right] (-2.1,0.1);
\path (-1.9,1.1) edge [->, loop, distance=25,in=-45,out=45] (-1.9,0.9);
\node[label=above:$v_1$] at (9,1) {};
\draw [->] (8.9,1.1) to[bend right] (8.1,1.1);
\draw [->] (8.9,0.9) to[bend right] (8.9,0.1);
\path (9.1,1.1) edge [->, loop, distance=25,in=-45,out=45] (9.1,0.9);
\node[label=above:$u_2$] at (-4,1) {};
\draw [->] (-4.1,1.1) to[bend right] (-4.9,1.1);						
\draw [->] (-4.1,0.9) to[bend right] (-4.1,0.1);						
\draw [->] (-3.9,1.1) to [bend left] (-3.1,1.1); 					 	
\node[label=above:$v_2$] at (6,1) {};
\draw [->] (5.9,1.1) to[bend right] (5.1,1.1);
\draw [->] (5.9,0.9) to[bend right] (5.9,0.1);
\draw [->] (6.1,1.1) to[bend left] (6.9,1.1);
\node[label=below:$u_3$] at (0,0) {};
\draw [->] (-0.1,0.1) to[bend left] (-0.1,0.9);							
\path (-0.1,0.1) edge [->, loop, distance=25,in=-135,out=135] (-0.1,-0.1);
\node[label=below:$v_3$] at (11,0) {};
\draw [->] (11.1,-0.1) to[bend right] (11.9,-0.1);						
\draw [->] (11.1,0.1) to[bend right] (11.1,0.9);						
\path (10.9,0.1) edge [->, loop, distance=25,in=-135,out=135]
(10.9,-0.1);	

\end{tikzpicture}
\end{center}
\caption{The figure shows possible transitions on non-obstacle
backbone-nodes from $(u_1,v_1)$, $(u_2,v_2)$ and  $(u_3,v_3)$,
where $u_j$ in $\omegaprune$ `equals' $v_j$ in $\tilde \omega$.}
\end{figure}

(2) If $u=v$, but $u=O_i$ for some $i\in\N$, then the step in the first
component is taken according to the conductances $\cp$.
The second component mimics this, but with the additional option to move
right even if the first component does not.
This is to adjust the transition probabilities of the second component to
match those of $(Y_n)_{n \in \N_0}$.
If the first component moves right, we demand that the second component
leaves the coming trap piece at the right end,
which we encode in the third component.
Since we further want the walk in the second component to have the
same law as $(Y_n)_{n \in \N_0}$,
we have to make sure that in total, it leaves the trap piece at the right
resp.\ left end with the correct probability.
These restrictions lead to a system of linear equations for the transition
probabilities whose solution is given as follows.
\begin{equation*}
\mathcal{Y}^\text{cand}_{n+1}	
= \begin{cases} (u+(1,0), v+(1,0), 1) &\text{with probability }
\frac{e^\lambda(1-e^{-2\lambda})}{e^\lambda(1-e^{-2\lambda})+1+e^{-\lambda}}
=\frac{e^\lambda-e^{-\lambda}}{e^\lambda+1},	\\
(u-(1,0), v-(1,0), 0) &\text{with probability }
\frac{e^{-\lambda}}{e^\lambda+1+e^{-\lambda}},	\\
(u', v', 0) &\text{with probability } 
\frac{1}{e^\lambda+1+e^{-\lambda}}, \\
(u-(1,0), v+(1,0), 1) &\text{with probability }	
\frac{e^{-\lambda}}{1+e^{-\lambda}}\big( \mathsf{e}_{L_i+1}' -
\frac{e^\lambda-e^{-\lambda}}{e^\lambda+1} \big),	\\
(u-(1,0), v+(1,0), -1) &\text{with probability }
e^{-\lambda}\big( \frac{1}{1+e^{-\lambda}} - \frac{1}{e^\lambda+1+e^{-\lambda}}
- \frac{1}{1+e^{-\lambda}}\mathsf{e}_{L_i+1}' \big), \\
(u', v+(1,0), 1) &\text{with probability }
\frac{1}{1+e^{-\lambda}}\big( \mathsf{e}_{L_i+1}' - \frac{e^\lambda-e^{-\lambda}}{e^\lambda+1} \big), \\
(u', v+(1,0), -1) &\text{with probability }
\frac{1}{1+e^{-\lambda}} - \frac{1}{e^\lambda+1+e^{-\lambda}}
- \frac{1}{1+e^{-\lambda}}\mathsf{e}_{L_i+1}',
\end{cases}
\end{equation*}
where $L_i$ is the length of the trap right of $v$
and
\begin{equation*}
\mathsf{e}_m' \defeq \frac{e^\lambda}{e^\lambda+1+e^{-\lambda}}
P_{m,\lambda}^1(\sigma_m<\sigma_0) =
\frac{e^\lambda}{e^\lambda+1+e^{-\lambda}} \frac{1 -
e^{-2\lambda}}{1-e^{-2\lambda m}}
\end{equation*}
is the probability that the biased random walk $(S_n')_{n\in\N_0}$ on $\Z$ starting from $0$ first
makes a step to the right and then hits $m$ before $0$.
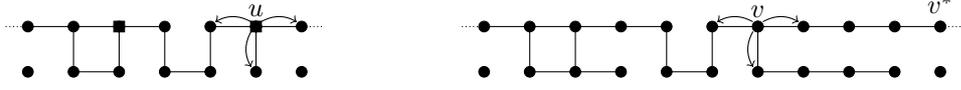
\begin{figure}[H]
\begin{center}
\begin{tikzpicture}[thin, scale=0.6,-,
                   shorten >=0pt+0.5*\pgflinewidth,
                   shorten <=0pt+0.5*\pgflinewidth,
                   every node/.style={circle, draw, fill = black, inner
sep = 0pt, minimum width =4 pt}
                   ]
\foreach \x in {-5,-4,-3,-2,-1,0,1}{
\foreach \y in {0,1}{
    \node at (\x,\y) {};
}}
\node[rectangle, draw, fill = black, minimum width = 4pt, minimum
height=4pt] at (-3,1) {};
\node[rectangle, draw, fill = black, minimum width = 4pt, minimum
height=4pt] at (0,1) {};
\foreach \x in {-5,-4,-3,-2,-1,0}{
\foreach \y in {1}{
    \ifdim \x pt < -2 pt \draw (\x,\y) -- (\x+1,\y);\fi
    \ifdim \x pt > -2 pt \draw (\x,\y) -- (\x+1,\y);\fi
  }}
\foreach \x in {-4,-2}{
	\draw (\x,0) -- (\x+1,0);
	}
\foreach \x in {-4,-3,-2,-1,0}{
	\draw (\x,0) -- (\x,1);
	}
\draw[densely dotted] (-5.5,1) -- (-5,1);
\draw[densely dotted] (1,1) -- (1.5,1);

\foreach \x in {5,6,7,8,9,10,11,12,13,14,15}
\foreach \y in {0,1}
    \node at (\x,\y) {};
\foreach \x in {5,6,7,8,10,11,12,13,14}{
\foreach \y in {1}{
    \ifdim \x pt < -1 pt \draw (\x,\y) -- (\x+1,\y);\fi
    \ifdim \x pt > -1 pt \draw (\x,\y) -- (\x+1,\y);\fi
  }}
\foreach \x in {6,7,9,11,12,13}{
\foreach \y in {0}{
	\draw (\x,\y) -- (\x+1,\y);
 }}
\foreach \x in {6,7,9,10,11}{
	\draw (\x,0) -- (\x,1);
	}
\draw[densely dotted] (4.5,1) -- (5,1);
\draw[densely dotted] (15,1) -- (15.5,1);

\node[label=above:$u$] at (0,1) {};
\draw [->] (0.1,1.1) to[bend left] (0.9,1.1);								
\draw [->] (-0.1,1.1) to[bend right] (-0.9,1.1);							
\draw [->] (-0.1,0.9) to[bend right] (-0.1,0.1);							

\node[label=above:$v$] at (11,1) {};
\node[label=above:$v^*$] at (15,1) {};
\draw [->] (11.1,1.1) to[bend left] (11.9,1.1);							
\draw [->] (10.9,1.1) to[bend right] (10.1,1.1);							
\draw [->] (10.9,0.9) to[bend right] (10.9,0.1);							
\end{tikzpicture}
\end{center}
\caption{Transitions from obstacles. Depending on the value of $\mathcal{Y}_{n+1,3}$,
after a step to the right it is already determined whether the random walk
on $\tilde{\omega}$ hits the boundary of the trap piece
at $v$ or $v^*$.}
\end{figure}

(3) If $v$ is in the interior of the backbone part of a trap piece in $\tilde{\omega}$ (and thus not in $\omegaprune$),
then we write $L_v$ for the length of the corresponding trap.
In this case,
the first component of $(\mathcal{Y}_n)_{n\in\N_0}$ stays put
while the second component moves in the trap piece
with transition probabilities according to the biased random walk
$(Y_n)_{n\in\N_0}$, possibly conditioned on the event that the boundary of the trap piece
is first hit at the left- or rightmost end, respectively.
Let $p_{k,0}$, $p_{k,-1}$, $p_{k,1}$ be the transition matrices of the
lazy biased random walk $(S_n)_{n\in\N_0}$ on $\{0,...,k\}$
(which steps to the right, steps to the left or stays put with probability proportional to $e^\lambda$, $e^{-\lambda}$ and $1$, respectively)
and the lazy biased random walk on $\{0,...,k\}$ conditioned on
$\{\sigma_0<\sigma_k\}$ resp.\ $\{\sigma_0>\sigma_k\}$,
where $\sigma_j \defeq \inf\{n \in \N_0: S_n=j\}$.
Then we set
\begin{equation*}
\mathcal{Y}^\text{cand}_{n+1}	=	\begin{cases}
							(u,v+(1,0),w)				&\text{with probability }
p_{L_v+1,w}(\x_v,\x_v+1),	\\
							(u,v-(1,0),w)				&\text{with probability }
p_{L_v+1,w}(\x_v,\x_v-1),	\\
							(u,v',w)					&\text{with probability } p_{L_v+1,w}(\x_v,\x_v),
							\end{cases}
\end{equation*}
where $\x_v \in \{1,...,L_v\}$ is the relative horizontal position of $v$ in the trap piece.

\begin{figure}[H]
\begin{center}
\begin{tikzpicture}[thin, scale=0.6,-,
                   shorten >=0pt+0.5*\pgflinewidth,
                   shorten <=0pt+0.5*\pgflinewidth,
                   every node/.style={circle, draw, fill = black, inner
sep = 0pt, minimum width =4 pt}
                   ]
\foreach \x in {-5,-4,-3,-2,-1,0,1}{
\foreach \y in {0,1}{
    \node at (\x,\y) {};
}}
\node[rectangle, draw, fill = black, minimum width = 4pt, minimum
height=4pt] at (-3,1) {};
\node[rectangle, draw, fill = black, minimum width = 4pt, minimum
height=4pt] at (0,1) {};
\foreach \x in {-5,-4,-3,-2,-1,0}{
\foreach \y in {1}{
    \ifdim \x pt < -2 pt \draw (\x,\y) -- (\x+1,\y);\fi
    \ifdim \x pt > -2 pt \draw (\x,\y) -- (\x+1,\y);\fi
  }}
\foreach \x in {-4,-2}{
	\draw (\x,0) -- (\x+1,0);
	}
\foreach \x in {-4,-3,-2,-1,0}{
	\draw (\x,0) -- (\x,1);
	}
\draw[densely dotted] (-5.5,1) -- (-5,1);
\draw[densely dotted] (1,1) -- (1.5,1);

\foreach \x in {5,6,7,8,9,10,11,12,13,14,15}
\foreach \y in {0,1}
    \node at (\x,\y) {};
\foreach \x in {5,6,7,8,10,11,12,13,14}{
\foreach \y in {1}{
    \ifdim \x pt < -1 pt \draw (\x,\y) -- (\x+1,\y);\fi
    \ifdim \x pt > -1 pt \draw (\x,\y) -- (\x+1,\y);\fi
  }}
\foreach \x in {6,7,9,11,12,13}{
\foreach \y in {0}{
	\draw (\x,\y) -- (\x+1,\y);
 }}
\foreach \x in {6,7,9,10,11}{
	\draw (\x,0) -- (\x,1);
	}
\draw[densely dotted] (4.5,1) -- (5,1);
\draw[densely dotted] (15,1) -- (15.5,1);

\path (-0.1,1.1) edge [->, loop, distance=25,in=45,out=135] (0.1,1.1);  
\path (-1.1,1.1) edge [->, loop, distance=25,in=45,out=135] (-0.9,1.1);  
\path (0.9,1.1) edge [->, loop, distance=25,in=45,out=135] (1.1,1.1);  
\path (-0.1,-0.1) edge [->, loop, distance=25,in=135,out=225] (-0.1,0.1); 
\draw [->] (13.1,1.1) to[bend left] (13.9,1.1);						
\draw [->] (12.9,1.1) to[bend right] (12.1,1.1);					
\path (12.9,0.9) edge [->, loop, distance=25,in=-45,out=-135] (13.1,0.9); 
\node[label=above:$v_1$] at (11,1) {};
\node[label=above:$v_2$] at (15,1) {};

\end{tikzpicture}
\end{center}
\caption{Transitions in the backbone part of trap pieces.
If $\mathcal{Y}_{n,3} \in \{-1,1\}$, then it is predetermined
that the walk hits the boundary of the trap piece at $v_1$ or $v_2$, respectively.}
\end{figure}
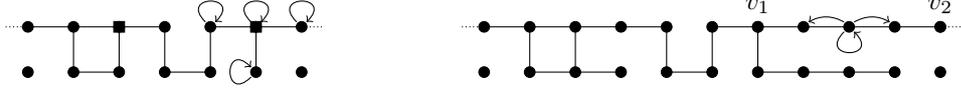

(4) If $v$ is a trap node in $\tilde{\omega}$, the first component of
$(\mathcal{Y}_n)_{n\in\N_0}$ stays put
while the second component moves inside the trap with transition probabilities according to the biased
random walk $(Y_n)_{n\in\N_0}$.
That is,
\begin{equation*} 
\mathcal{Y}^\text{cand}_{n+1}
=\begin{cases}
(u,v+(1,0),0) &\text{with probability } \frac{e^\lambda}{e^\lambda+1+e^{-\lambda}} \\
(u,v-(1,0),0) &\text{with probability } \frac{e^{-\lambda}}{e^\lambda+1+e^{-\lambda}} \\
(u,v',0) &\text{with probability } \frac{1}{e^\lambda+1+e^{-\lambda}}
\end{cases}.
\end{equation*}

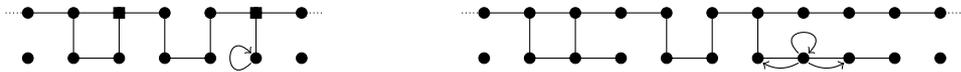
\begin{figure}[H]
\begin{center}
\begin{tikzpicture}[thin, scale=0.6,-,
                   shorten >=0pt+0.5*\pgflinewidth,
                   shorten <=0pt+0.5*\pgflinewidth,
                   every node/.style={circle, draw, fill = black, inner
sep = 0pt, minimum width =4 pt}
                   ]
\foreach \x in {-5,-4,-3,-2,-1,0,1}{
\foreach \y in {0,1}{
    \node at (\x,\y) {};
}}
\node[rectangle, draw, fill = black, minimum width = 4pt, minimum
height=4pt] at (-3,1) {};
\node[rectangle, draw, fill = black, minimum width = 4pt, minimum
height=4pt] at (0,1) {};
\foreach \x in {-5,-4,-3,-2,-1,0}{
\foreach \y in {1}{
    \ifdim \x pt < -2 pt \draw (\x,\y) -- (\x+1,\y);\fi
    \ifdim \x pt > -2 pt \draw (\x,\y) -- (\x+1,\y);\fi
  }}
\foreach \x in {-4,-2}{
	\draw (\x,0) -- (\x+1,0);
	}
\foreach \x in {-4,-3,-2,-1,0}{
	\draw (\x,0) -- (\x,1);
	}
\draw[densely dotted] (-5.5,1) -- (-5,1);
\draw[densely dotted] (1,1) -- (1.5,1);

\foreach \x in {5,6,7,8,9,10,11,12,13,14,15}
\foreach \y in {0,1}
    \node at (\x,\y) {};
\foreach \x in {5,6,7,8,10,11,12,13,14}{
\foreach \y in {1}{
    \ifdim \x pt < -1 pt \draw (\x,\y) -- (\x+1,\y);\fi
    \ifdim \x pt > -1 pt \draw (\x,\y) -- (\x+1,\y);\fi
  }}
\foreach \x in {6,7,9,11,12,13}{
\foreach \y in {0}{
	\draw (\x,\y) -- (\x+1,\y);
 }}
\foreach \x in {6,7,9,10,11}{
	\draw (\x,0) -- (\x,1);
	}
\draw[densely dotted] (4.5,1) -- (5,1);
\draw[densely dotted] (15,1) -- (15.5,1);

\path (-0.1,-0.1) edge [->, loop, distance=25,in=135,out=225] (-0.1,0.1); 
\draw [->] (12.1,-0.1) to[bend right] (12.9,-0.1);							
\draw [->] (11.9,-0.1) to[bend left] (11.1,-0.1);							
\path (11.9,0.1) edge [->, loop, distance=25,in=45,out=135] (12.1,0.1);  

\end{tikzpicture}
\end{center}
\caption{Transitions in the dead end part of trap pieces}
\end{figure}

(5) Finally, when $v\in \tilde{\omega} \cap \omegaprune$, but the positions of
the two components of $(\mathcal{Y}_n)_{n\in\N_0}$ do not correspond, the
second component stays put,
while the first component moves with transition probabilities given by the
conductances $\cp$:
\begin{equation*}
\mathcal{Y}^\text{cand}_{n+1}	
= \begin{cases}
(u+(1,0),v,0) &\text{with probability proportional to } \cp(\<u,u+(1,0)\>), \\
(u-(1,0),v,0) &\text{with probability proportional to } \cp(\<u,u-(1,0)\>), \\
(u',v,0) &\text{with probability proportional to } \cp(\<u,u'\>). \\
\end{cases}
\end{equation*}

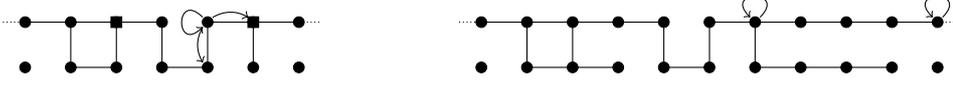
\begin{figure}[H]
\begin{center}
\begin{tikzpicture}[thin, scale=0.6,-,
                   shorten >=0pt+0.5*\pgflinewidth,
                   shorten <=0pt+0.5*\pgflinewidth,
                   every node/.style={circle, draw, fill = black, inner
sep = 0pt, minimum width =4 pt}
                   ]
\foreach \x in {-5,-4,-3,-2,-1,0,1}{
\foreach \y in {0,1}{
    \node at (\x,\y) {};
}}
\node[rectangle, draw, fill = black, minimum width = 4pt, minimum
height=4pt] at (-3,1) {};
\node[rectangle, draw, fill = black, minimum width = 4pt, minimum
height=4pt] at (0,1) {};
\foreach \x in {-5,-4,-3,-2,-1,0}{
\foreach \y in {1}{
    \ifdim \x pt < -2 pt \draw (\x,\y) -- (\x+1,\y);\fi
    \ifdim \x pt > -2 pt \draw (\x,\y) -- (\x+1,\y);\fi
  }}
\foreach \x in {-4,-2}{
	\draw (\x,0) -- (\x+1,0);
	}
\foreach \x in {-4,-3,-2,-1,0}{
	\draw (\x,0) -- (\x,1);
	}
\draw[densely dotted] (-5.5,1) -- (-5,1);
\draw[densely dotted] (1,1) -- (1.5,1);

\foreach \x in {5,6,7,8,9,10,11,12,13,14,15}
\foreach \y in {0,1}
    \node at (\x,\y) {};
\foreach \x in {5,6,7,8,10,11,12,13,14}{
\foreach \y in {1}{
    \ifdim \x pt < -1 pt \draw (\x,\y) -- (\x+1,\y);\fi
    \ifdim \x pt > -1 pt \draw (\x,\y) -- (\x+1,\y);\fi
  }}
\foreach \x in {6,7,9,11,12,13}{
\foreach \y in {0}{
	\draw (\x,\y) -- (\x+1,\y);
 }}
\foreach \x in {6,7,9,10,11}{
	\draw (\x,0) -- (\x,1);
	}
\draw[densely dotted] (4.5,1) -- (5,1);
\draw[densely dotted] (15,1) -- (15.5,1);

\draw [->] (-0.9,1.1) to[bend left] (-0.1,1.1);								
\draw [->] (-1.1,0.9) to[bend right] (-1.1,0.1);							
\path (-1.1,1.1) edge [->, loop, distance=25,in=225,out=135] (-1.1,0.9);  

\path (11.1,1.1) edge [->, loop, distance=25,in=135,out=45] (10.9,1.1);  
\path (15.1,1.1) edge [->, loop, distance=25,in=135,out=45] (14.9,1.1);  

\end{tikzpicture}
\end{center}
\caption{Transitions on the backbone when coordinates do not coincide.
In this case, the walk on $\tilde{\omega}$ waits at a trap end
or a vertex opposite a trap entrance. This vertex must be passed by the walk on $\omegaprune$
provided that this walk is transient to the right.
The walk on $\omegaprune$ pauses until the walk on $\tilde{\omega}$ hits its position.}
\end{figure}

We write $\Prmp'$ for the distribution of the environment
$(\omegaprune,\tilde{\omega})$ and
$P_{\omegaprune,\tilde{\omega},\lambda}'$ for the quenched law of
$(\mathcal{Y}_n)_{n\in\N_0}$ as described above.
With these, we define a measure $\Prob'$ on $(\{0,1\}^E \times \{0,1\}^E)
\times (V^2\times \{-1,0,1\})^{\N_0}$,
endowed with the product $\sigma$-Algebra,
by
\begin{equation*}
\Prob'(A\times B) \defeq \int_A P_{\omegaprune,\tilde{\omega},\lambda}'(B) \,\Prmp'(\mathrm{d}(\omegaprune,\tilde{\omega})).
\end{equation*}
Sometimes, the walks on $\omegaprune$ and $\tilde{\omega}$ are at different positions (when $\omegaprune$ is embedded in $\tilde{\omega}$).
Then, depending on the particular situation, one of the walks waits while the other moves until they meet again.
The times at which each of the walks moves without being forced to hold as described above are collected in the following sets:
\begin{align*}
	N_1	&\defeq	\{ n\in\N_0: \mathcal{Y}_{n,2} \text{ is at a vertex in } \tilde{\omega} \text{ corresponding to a vertex in } \omegaprune \},		\\
	N_2	&\defeq \{ n\in\N_0: \mathcal{Y}_{n,1} = \mathcal{Y}_{n,2} \} \cup \{n\in\N_0: \mathcal{Y}_{n,2} \text{ is in the interior of a trap piece}\}.
\end{align*}
Let $(s_{1,k})_{k\in\N}$ resp.\ $(s_{2,k})_{k\in\N}$ be enumerations of
$N_1$ resp.\ $N_2$ in ascending order.
Then the following processes coincide in law with $(Y_n^\mathrm{p})_{n\in\N_0}$ and $(Y_n)_{n\in\N_0}$, respectively.
More precisely, with
\begin{equation*}
(\mathcal{Y}_n^\mathrm{p})_{n\in\N_0}	\defeq	(\mathcal{Y}_{s_{1,n},1})_{n\in\N_0},		\quad
	(\tilde Y_n)_{n\in\N_0}		\defeq	(\mathcal{Y}_{s_{2,n},2})_{n\in\N_0}
\end{equation*}
the following lemma holds.

\begin{lemma} \label{Lem:law of pruned and extended pruned walk}
We have
\begin{equation*}
( \mathcal{Y}_n^\mathrm{p} )_{n\in\N_0} 	\disteq 	(Y_n^\mathrm{p})_{n\in\N_0},
\quad	 ( \tilde Y_n )_{n\in\N_0}		\disteq	(Y_n)_{n\in\N_0}.
\end{equation*}
\end{lemma}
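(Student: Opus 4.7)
The plan is to verify the two distributional identities by showing that the restricted processes are Markov chains with the correct one-step kernels. Since $(\mathcal{Y}_n)_{n\in\N_0}$ is a Markov chain on the product space by construction and membership in $N_1,N_2$ depends only on the current state, the hitting times $(s_{1,n})_{n\in\N}$ and $(s_{2,n})_{n\in\N}$ are stopping times with respect to its natural filtration; by the strong Markov property, $(\mathcal{Y}_{s_{j,n}})_{n\in\N_0}$ is itself a Markov chain for $j=1,2$. The key structural observation is that between two consecutive times in $N_1$ the first coordinate is frozen, because all intermediate states lie in cases (3) or (4) where by construction $\mathcal{Y}_{\cdot,1}$ does not move; symmetrically, between two consecutive times in $N_2$ the second coordinate is frozen, as the intermediate states lie in case (5). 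Consequently $\mathcal{Y}^\mathrm{p}_{n+1}=\mathcal{Y}_{s_{1,n}+1,1}$ and $\tilde Y_{n+1}=\mathcal{Y}_{s_{2,n}+1,2}$, so the transition kernels of the two restricted processes are obtained simply by marginalising the one-step transition of $\mathcal{Y}$ from a state in $N_1$ (resp.\ $N_2$) over the remaining coordinates.

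I would then carry out these marginalisations case by case. For $(\tilde Y_n)$, cases (1), (3), and (4) are immediate, since the transitions of the second coordinate were declared there to be exactly those of the lazy walk $(Y_n)$ on $\tilde\omega$. In case (2), summing the seven listed joint probabilities over the first and third coordinates is a short algebraic check: the terms containing $\mathsf{e}_{L_i+1}'$ cancel, and the resulting marginals are $\frac{e^{\pm\lambda}}{e^\lambda+1+e^{-\lambda}}$ and $\frac{1}{e^\lambda+1+e^{-\lambda}}$, matching the transitions of $(Y_n)$ at a trap entrance. The parameter $\mathsf{e}_{L_i+1}'$ then reappears in case (3), where the memory coordinate $w\in\{-1,1\}$ forces a Doob $h$-transform on the second coordinate so that it exits the current trap piece on a predetermined side; the weights assigned in case (2) are chosen precisely so that, combined with the Gambler's ruin probability encoded in $\mathsf{e}_{L_i+1}'$, the law of the entire excursion of $\tilde Y$ through the trap piece agrees with that of an unconditioned $(Y_n)$-excursion.

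For $(\mathcal{Y}^\mathrm{p}_n)$, cases (1) and (5) are again immediate because the first coordinate moves with probabilities proportional to $\cp$ by definition. The substantive check is once more case (2): marginalising the first coordinate's transition over the second and third coordinates yields the ratios $e^\lambda(1-e^{-2\lambda}):e^{-\lambda}:1$, which coincide with $\cp(\<O_i,O_i+(1,0)\>):\cp(\<O_i-(1,0),O_i\>):\cp(\<O_i,O_i'\>)$ at an obstacle. This is in fact the motivation for inserting the factor $(1-e^{-2\lambda})^{p(v)}$ into the definition of $\cp$, since it absorbs the series resistance produced when the trap piece is collapsed to a single edge. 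The main obstacle is the joint bookkeeping in case (2): the transition matrix written down must simultaneously satisfy the marginal constraints coming from both target processes, together with the interpretation of $w\in\{-1,0,1\}$ as the predetermined exit side of the upcoming trap piece, and one must verify algebraically that these overdetermined constraints are mutually compatible. Once this compatibility is established, the two distributional identities follow directly from the strong Markov property and the freezing observation above.
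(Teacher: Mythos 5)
Your proposal is correct and follows essentially the same route as the paper's proof: the paper likewise reduces the claim to checking that the quenched one-step transition probabilities of $(\mathcal{Y}_n^\mathrm{p})_{n\in\N_0}$ and $(\tilde Y_n)_{n\in\N_0}$ coincide with those of the pruned walk and the original lazy walk respectively, invoking the Markov property of $(\mathcal{Y}_n)_{n\in\N_0}$ and observing that the only nontrivial checks occur at obstacles and inside trap pieces, where the transition table under case (2) is precisely the solution of the same system of linear constraints you describe. You make explicit the freezing observation and the marginalisation computations that the paper compresses into ``one can check,'' but the underlying argument is identical.
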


\begin{proof}
Since $(\mathcal{Y}_n^\mathrm{p} )_{n\in\N_0}$ and $(Y_n^\mathrm{p})_{n\in\N_0}$ are defined on the same environment,
and the environments of $( \tilde Y_n )_{n\in\N_0}$ and $(Y_n)_{n\in\N_0}$ are
identically distributed by construction,
it suffices to check the quenched transition probabilities of $( \tilde Y_n )_{n\in\N_0}$ and $( \mathcal{Y}_n^\mathrm{p} )_{n\in\N_0}$, respectively.
One can check that the transition probabilities of $(\mathcal{Y}_n^\mathrm{p})_{n\in\N_0}$ coincide with those of
$(Y_n^\mathrm{p})_{n\in\N_0}$,
thus the equality in law of $(Y_n^\mathrm{p})_{n\in\N_0}$ and
$(\mathcal{Y}_n^\mathrm{p})_{n\in\N_0}$ follows from the Markov property
of $(\mathcal{Y}_n)_{n\in\N_0}$.
For $(\tilde Y_n)_{n\in\N_0}$, at most nodes this is also obvious except for
transitions at obstacles and inside trap pieces.
However, it suffices to show that on obstacles, steps into the different
directions are taken with the correct probability
and that excursions on the following trap pieces end on the left resp.\
right end with the correct probability,
i.e., that $(\mathcal{Y}_{n,3})_{n\in\N_0}$ takes value $-1$ or $1$ with
the correct probability.
This amounts to a system of linear equations which is solved by the
transition probabilities defined under (2).
The result now also follows from the Markov property of $(\mathcal{Y}_n)_{n\in\N_0}$.
\end{proof}

From now on, all results concerning $(Y_n)_{n\in\N_0}$ will be discussed
in terms of the process $(\tilde Y_n)_{n\in\N_0}$ under $\Prob'$.
To ease notation, we shall write $(Y_n)_{n\in\N_0}$ and $\Prob$ for
$(\tilde Y_n)_{n\in\N_0}$ and $\Prob'$, respectively.
We shall also write $\ell_i$ though technically referring to $L_i$.
Consequently, we shall not distinguish between
$(Y_n^\mathrm{p})_{n\in\N_0}$ and $(\mathcal{Y}_n^\mathrm{p})_{n\in\N_0}$
nor between $\omega$ and $\tilde{\omega}$.

\begin{lemma}\label{Lem:transience pruned walk}
For $\lambda > \lambda^*\defeq \frac{ \log(2) }{2}$, especially for
$\lambda \geq \frac{\lambdacrit}{2}$, it holds that $\lim_{n \to \infty}
\x(Y_n^\mathrm{p}) = \infty$ a.\,s.
\end{lemma}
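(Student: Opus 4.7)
The plan is to verify the electrical-network criterion for transience. Since $(Y_n^\mathrm{p})_{n\in\N_0}$ is a reversible random walk on $\omegaprune$ with conductances $\cp$, transience to $+\infty$ follows once we show $R_\mathrm{eff}(\bfnull \leftrightarrow +\infty) < \infty$ almost surely, combined with $R_\mathrm{eff}(\bfnull \leftrightarrow -\infty) = \infty$. The second identity is routine: since $\cp(\<u,v\>) \leq e^{\lambda(\x(u)+\x(v))}$ decays geometrically as $\x \to -\infty$, Nash--Williams applied to the cutsets at $\x$-coordinate $-k$ yields a divergent sum of cutset resistances.

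For the first bound, I would first observe that each pre-regeneration point $R_n^{\mathrm{pre}}$ is a cut-point of $\omegaprune$: by definition its opposite vertex is isolated in $\omega$ and hence absent from $\omegaprune$, and no trap-replacement long edge can straddle a pre-regeneration point since the horizontal backbone-side edges of any trap are open by the trap definition. The series law then yields
\begin{equation*}
R_\mathrm{eff}(\bfnull \leftrightarrow +\infty) \,\leq\, \sum_{n \geq 0} R^{(n)},
\end{equation*}
where $R^{(n)}$ is the effective resistance from $R_n^{\mathrm{pre}}$ to $R_{n+1}^{\mathrm{pre}}$ inside block $n$. I next exploit the scaling identity $\cp(\<u,v\>) = D_n\,\hat\cp(\<u,v\>)$ valid for all edges inside block $n$, where $D_n \defeq e^{2\lambda \x(R_n^{\mathrm{pre}})}(1-e^{-2\lambda})^{p(R_n^{\mathrm{pre}})}$ and $\hat\cp$ is computed after recentering both the $\x$-coordinate and the obstacle counter at the left endpoint. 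Hence $R^{(n)} = \hat R_n/D_n$ with block-intrinsic resistances $\hat R_n$ that are i.i.d.\ in $n \geq 1$ under $\Prmp^\circ$ by construction of the pre-regeneration block decomposition.

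A strong-law analysis then gives the geometric growth of $D_n$. The i.i.d.\ block widths $\Delta_n$ and obstacle counts $k_n$ yield $\x(R_n^{\mathrm{pre}})/n \to \bar\x$ and $p(R_n^{\mathrm{pre}})/n \to \bar p$ almost surely with $\bar p \leq \bar\x$, the inequality coming from the fact that trap pieces on each of the two rows are pairwise disjoint and of $\x$-length at least $2$, so at most $N$ obstacles can occur in $[0,N)$. For $\lambda > \log(2)/2$, equivalently $e^{2\lambda}-1 > 1$, this gives
\begin{equation*}
D_n \,\geq\, \bigl(e^{2\lambda}(1-e^{-2\lambda})^{\bar p/\bar\x}\bigr)^{n\bar\x(1+o(1))} \,\geq\, (e^{2\lambda}-1)^{n\bar\x(1+o(1))} \eqdef \theta^{n(1+o(1))}
\end{equation*}
almost surely, with $\theta > 1$.

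The main technical obstacle is to control $\hat R_n$ against this geometric decay, since a block may contain traps of arbitrary length whose associated long edges carry potentially large resistance. Along a spanning path through the block, each edge resistance can be bounded by $(1-e^{-2\lambda})^{-k_n}$ times a summable geometric factor in the recentered $\x$-coordinate, which I expect to give $\hat R_n \leq C(1-e^{-2\lambda})^{-k_n}$. Since $k_n \leq \Delta_n$ and the block width $\Delta_n$ has an exponential moment under $\Prmp^\circ$ (inherited from Lemma~\ref{Lem:trap length distribution} together with the pre-regeneration structure already used in Lemma~\ref{Lemma:known results rho and tau}(b)), $\hat R_n$ has a polynomial moment of some positive order. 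A Borel--Cantelli argument then ensures $\hat R_n \leq \theta^{n/2}$ eventually almost surely, and hence $\sum_n R^{(n)} \leq \sum_n \hat R_n\, \theta^{-n(1+o(1))} < \infty$ almost surely, completing the proof.
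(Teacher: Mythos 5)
Your argument is correct in outline, but it takes a genuinely different route from the paper. The paper's proof is a one-shot, deterministic flow argument: since $\omegaprune$ contains no traps, there is an infinite open path from $\bfnull$ to $+\infty$ whose $\x$-coordinates are nondecreasing; pushing a unit current along it and using the crude deterministic bound $p(n)\leq n$ on the number of obstacles left of level $n$, Thomson's principle gives $\mathcal{R}^{\mathrm{p}}(\bfnull\leftrightarrow\infty)\leq 1+2e^{\lambda}\sum_{n\geq1}\bigl(e^{-2\lambda}/(1-e^{-2\lambda})\bigr)^{n}$, finite precisely when $e^{2\lambda}>2$, i.e.\ $\lambda>\log(2)/2$ --- note this is exactly the same threshold you obtain from $e^{2\lambda}-1>1$, since $\bar p\leq\bar\x$ plays the role of $p(n)\leq n$. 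Your route instead decomposes at pre-regeneration points (which are indeed cut vertices of $\omegaprune$, for the reason you give), applies the series law, and controls the i.i.d.\ block resistances $\hat R_n$ against the geometric growth of $D_n$ via the strong law, exponential moments of block widths, and Borel--Cantelli. What the paper's approach buys is a \emph{uniform, configuration-independent} quantitative bound on the effective resistance, which is reused immediately afterwards (the escape-probability bound \eqref{eqn:p_esc^p} and Lemmas \ref{Lem:pruned regeneration points} and \ref{Lem:moment estimate for tau_1^p} rely on it); your argument only yields the qualitative a.\,s.\ statement, at the cost of heavier probabilistic machinery (SLLN, moment bounds, Borel--Cantelli) that the deterministic bound renders unnecessary. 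Two points you should tighten if you flesh this out: the step ``$\hat R_n\leq C(1-e^{-2\lambda})^{-k_n}$'' needs the same structural fact the paper invokes, namely that within each pruned block there is an open left-to-right path with nondecreasing $\x$-coordinates (then the per-level geometric resistance bound gives the constant $C(\lambda)$); and since the lemma is stated under $\Prob$ (environment law $\Prmp$, resp.\ $\Prmp'$) rather than $\Prob^\circ$, you should add the one-line remark that the blocks to the right of $R_0^{\mathrm{pre}}$ have the same i.i.d.\ law under $\Prmp$ as under $\Prmp^\circ$, while the a.\,s.\ finite block containing the origin does not affect convergence of the series. Your explicit Nash--Williams argument for $\mathcal{R}(\bfnull\leftrightarrow-\infty)=\infty$ to upgrade transience to $\x(Y_n^{\mathrm{p}})\to+\infty$ is a welcome addition; the paper leaves this deduction implicit.
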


The proof of the lemma is very similar to that of Proposition 3.1 in
\cite{Axelson-Fisk+H"aggstr"om:2009b}.
We include it for completeness.

\begin{proof}
It is sufficient to show that $\bfnull$ is a transient state for the
biased random walk on $V(\omegaprune)$.
We use electrical network theory.
Write $\mathcal{R}^{\mathrm{p}}(\bfnull \leftrightarrow \infty)$
for the effective resistance between $\bfnull$ and $+\infty$ in the random
conductance model on $\omegaprune$ with conductances $\cp(e)$ for $e \in
E$ with $\omegaprune(e)=1$.
Using Thomson's Principle \cite[Theorem 9.10]{Levin+Peres+Wilmer:2009}, we
infer
\begin{equation*}
\mathcal{R}^{\mathrm{p}}(\bfnull \leftrightarrow \infty) \leq
\mathcal{E}^{\mathrm{p}}(\theta)
\end{equation*}
for all unit flows $\theta$ from $\bfnull$ to $\infty$ where
$\mathcal{E}^{\mathrm{p}}(\theta)$ is the energy of the flow $\theta$.
Here a flow $\theta$ from $u$ to $\infty$ is a mapping $\theta:
V(\omegaprune) \times V(\omegaprune) \to \R$ satisfying the properties
\begin{enumerate}[(i)]
	\item	$\theta(v,w) = 0$ unless there is an open edge connecting $v$ and
$w$ in $\omegaprune$;
	\item	$\theta(v,w) = -\theta(w,v)$ for all $v,w \in V(\omegaprune)$;
	\item	$\sum_{w \in V(\omegaprune)} \theta(v,w) = \1_{\{u\}}(v)$ for all
$v \in V(\omegaprune)$.
\end{enumerate}
The energy of the flow $\theta$ is $\mathcal{E}^{\mathrm{p}}(\theta) =
\sum_{e: \omegaprune(e)=1} \theta(e)^2/\cp(e)$
where $\theta(e)^2 = \theta(v,w)^2$ if $e=\<v,w\>$.
Since there are no traps in $\omegaprune$, there exists an infinite open
self-avoiding path $P = (e_1,e_2,\ldots)$ connecting $\bfnull$ with
$\infty$.
This path never backtracks in the sense that the sequence of
$\x$-coordinates of the vertices on this path is nondecreasing.
Now define a flow $\theta$ from $\bfnull$ to $\infty$ by pushing a unit
current through $P$.
More precisely, if $e_n = \langle u_{n-1},u_n\rangle$ with $u_0 \defeq
\bfnull$, then let $\theta(u_{n-1},u_n) = 1 = -\theta(u_n,u_{n-1})$ for
all $n \in \N$
and $\theta(v,w)=0$ whenever $\<v,w\>$ is not on the path $P$.
For every $\x$-level $n \in \N_0$ there is at most one edge $e$ in $P$
connecting the two vertices with $\x$-value $n$.
The resistance of this edge is bounded by $r^{\mathrm{p}}(e) \leq
e^{-2\lambda n} (1-e^{-2\lambda})^{-p(n)}$
where $p(n)$ is the number of obstacles with $\x$-value $<n$. There are at
most $n$ such obstacles. Therefore,
$r^{\mathrm{p}}(e) \leq e^{-2\lambda n} (1-e^{-2\lambda})^{-n}$.
Further, for every $n \in \N$, there is exactly one edge on $P$ leading
from a vertex with $\x$-value $n-1$ to $\x$-value $n$.
The resistance of this edge is bounded by $r^{\mathrm{p}}(e) \leq
e^{-\lambda (2n-1)} (1-e^{-2\lambda})^{-p(n)} \leq e^{-\lambda (2n-1)}
(1-e^{-2\lambda})^{-n}$.
Consequently, the energy $\mathcal{E}^{\mathrm{p}}(\theta)$ is bounded by
\begin{align*}
\mathcal{E}^{\mathrm{p}}(\theta)
&=		\sum_{e \in P} \theta(e)^2 r^{\mathrm{p}}(e)
\leq		1+ \sum_{n=1}^\infty (e^{-\lambda (2n-1)} + e^{-2\lambda n})
(1-e^{-2\lambda})^{-n}
\leq 	1+ 2 e^\lambda \sum_{n=1}^\infty
\Big(\frac{e^{-2\lambda}}{1-e^{-2\lambda}}\Big)^n.
\end{align*}
The latter series is finite iff
$\frac{e^{-2\lambda}}{1-e^{-2\lambda}} < 1$ or, equivalently,
$\lambda > \frac{\log(2)}{2} \eqdef \lambda^*$.
Comparing this with $\lambdacrit/2$, for which we have an explicit formula
in terms of $p$ given in Proposition \ref{Prop:transience of Y_n} with
unique minimizer $p=1/2$,
we have
\begin{align*}
\tfrac{\lambdacrit}{2}
&\geq	\tfrac{ \lambdacrit(1/2)}{2}
=	\tfrac{1}{4} \log\Big(\tfrac{4}{ 3-\sqrt{5}}\Big)
=	\tfrac{1}{2} \log\Big(\tfrac{2}{\sqrt{ 3 - \sqrt{5}}}\Big)
> 	\tfrac{\log(2)}{2}
=	\lambda^*.
\end{align*}
\end{proof}

It also follows from the proof of Lemma \ref{Lem:transience pruned walk}
that for $u \in \omegaprune$ and $\lambda \geq \lambdacrit/2$,
the escape probability at $u$, i.e., the probabilty to leave $u$ and never return, is uniformly bounded from below.
For $u \in \omegaprune$, let $\sigma_u^\mathrm{p} \defeq \inf\{n>0: Y_n^\mathrm{p} = u \}$. Also let
$\mathcal{R}^{\mathrm{p}}(u \leftrightarrow \infty)$ and $\cp(u)$ be the
effective resistance between $u$ and $+\infty$ and the sum of conductances
of all incident edges at $u$, respectively, in the random conductance
model on $\omegaprune$ with conductances $\cp(e)$ for $e \in E$ with
$\omegaprune(e)=1$. Then pushing a unit current from $u$ to $+\infty$ as
in the proof of Lemma \ref{Lem:transience pruned walk}, we get
\begin{align}\label{eqn:p_esc^p}
P_{\omega,\lambda}^u\big( \sigma_u^\mathrm{p} = \infty \big)
&=		\frac{1}{c^\mathrm{p}(u) \mathcal{R}^{\mathrm{p}}(u\leftrightarrow \infty)}		\notag\\
&\geq	\frac{1}{3e^{(2\x(u)+1)\lambda}(1-e^{-2\lambda})^{p(u)} e^{-2\lambda\x(u)}(1-e^{-2\lambda})^{-p(u)}
		\big(1+2e^{2\lambda}\sum_{n=1}^\infty \big(\frac{e^{-2\lambda}}{1-e^{-2\lambda}}\big)^{\!n} \big) }	\notag \\
&= \frac{1}{3 e^\lambda \big(1+2e^{2\lambda}\sum_{n=1}^\infty \big(\frac{e^{-2\lambda}}{1-e^{-2\lambda}}\big)^{\!n}\big) }
> 0.
\end{align}

Let $R_1^\mathrm{p}, R_2^\mathrm{p}, \ldots$ be an enumeration
from left to right of the pre-regeneration points in $\omegaprune$
which are visited exactly once by $(Y_n^\mathrm{p} )_{n\in\N_0}$.
Further, let $\rho_0^\mathrm{p}=0$ and $\rho_n^\mathrm{p} \defeq 
\x(R_n^\mathrm{p})$ for $n \in \N$.
Finally, for $n \in \N$, let $\tau_n^\mathrm{p}$ be the unique time $k$ 
with $X_{k}^\mathrm{p} = \rho_n^\mathrm{p}$.
We refer to the $R_n^\mathrm{p}$'s and $\tau_n^\mathrm{p}$'s as 
regeneration points and times, respectively,
of the pruned walk.

\begin{lemma}\label{Lem:pruned regeneration points}
With $\Prob$-probability $1$, there exist infinitely many regeneration 
points of
$( Y_n^\mathrm{p} )_{n\in\N_0}$.
\end{lemma}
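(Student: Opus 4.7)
The plan is to adapt the argument used in \cite[Section 4]{Gantert+al:2018} for the existence of infinitely many regeneration points of $(Y_n)_{n \in \N_0}$, by combining transience with a uniform lower bound on the escape probability. First I would observe that the pre-regeneration points $(R_n^\mathrm{pre})_{n \in \Z}$ produced by Lemma \ref{Lem:number of regeneration points} remain pre-regeneration points of $\omegaprune$: the isolation of $(\x(R_n^\mathrm{pre}), 1)$ in $\omega$ prevents $R_n^\mathrm{pre}$ from being any kind of trap vertex, so the pruning leaves $R_n^\mathrm{pre}$ and its incident edges untouched, and the parallel vertex is absent from $\omegaprune$. Consequently each $R_n^\mathrm{pre}$ is a cut vertex of $\omegaprune$ (the unique vertex of its column), and together with Lemma \ref{Lem:transience pruned walk} the first hitting times $\sigma_n \defeq \inf\{k \geq 0 : Y_k^\mathrm{p} = R_n^\mathrm{pre}\}$ are almost surely finite for every $n \geq 1$.

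Next, the strong Markov property at $\sigma_n$ combined with the uniform escape probability bound \eqref{eqn:p_esc^p} yields
\begin{equation*}
\Prob\big( Y_k^\mathrm{p} \neq R_n^\mathrm{pre} \text{ for all } k > \sigma_n \, \big| \, \F_{\sigma_n} \big) \geq c_0 > 0 \quad \text{a.s.,}
\end{equation*}
where $c_0$ is the universal constant of \eqref{eqn:p_esc^p}, depending only on $\lambda$. The event on the left is precisely the event that $R_n^\mathrm{pre}$ is a regeneration point of the pruned walk, so every pre-regeneration point is a regeneration point with conditional probability at least $c_0$.

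To upgrade this positive-probability estimate into an almost-sure infinite supply of regeneration points, I would invoke the i.i.d.\ cycle structure of the environment under $\Prmp^\circ$ (Definition \ref{def:cycle-stationary percolation law}): the event ``infinitely many regeneration points of the pruned walk'' is invariant under the one-cycle shift, which is measure-preserving and ergodic on the coupled environment-walk system, so it has probability in $\{0,1\}$; the preceding lower bound then forces probability $1$. The statement for $\Prob$ follows by absolute continuity on $\{R_0^\mathrm{pre} = \bfnull\}$. The main technical subtlety lies in formalizing this $0$-$1$ argument; an alternative hands-on route is to iterate the strong Markov property, using that after any regeneration point the pruned walk restarts in an environment to the right that is independent of the past under $\Prmp^\circ$, and then apply the second Borel--Cantelli lemma to the resulting sequence of independent regeneration attempts.
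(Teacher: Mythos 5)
Your opening steps coincide with the paper's (very short) proof, which simply defers to the argument of Axelson-Fisk--H\"aggstr\"om, Lemma 5.1, after observing that its only input is a uniform lower bound on the escape probability at pre-regeneration points, supplied here by \eqref{eqn:p_esc^p}: pre-regeneration points indeed survive the pruning, are cut vertices of $\omegaprune$, are all hit almost surely by transience (Lemma \ref{Lem:transience pruned walk}), and at each first visit the quenched strong Markov property together with \eqref{eqn:p_esc^p} gives a non-return probability bounded below by a constant $c_0>0$, uniformly in the environment and the vertex. Up to this point you have exactly the ingredients the paper relies on.

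The gap is in the upgrade from this bound to ``infinitely many regeneration points a.s.'' The event $A_n$ that $R_n^{\mathrm{pre}}$ is never revisited is decided only at time infinity, so it is not measurable with respect to $\F_{\sigma_{n+1}}$; hence neither the second Borel--Cantelli lemma nor its conditional (L\'evy) version applies to the sequence $(A_n)_n$ as you arrange it, and your ``hands-on'' alternative is circular: the assertion that after any regeneration point the pruned walk restarts in an environment independent of the past is itself a regeneration-structure statement for the pruned walk which has not been established and whose proof would rest on the present lemma. The ergodicity route is likewise unjustified: the one-cycle shift is measure preserving for $\Prmp^\circ$, but ergodicity of the induced skew shift on the coupled environment--walk system is not available here, and proving it is essentially as hard as building the regeneration structure; without it, invariance of the event plus the bound $\Prob(\limsup_n A_n)\geq c_0$ (reverse Fatou) only gives positive probability, not probability one. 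The missing idea is the standard sequential ``fresh point'' argument used in the cited proof and sketched again for Lemma \ref{Lem:moment estimate for tau_1^p}: fix a level $m$, wait for the first visit to a pre-regeneration point to the right of $m$ and attempt to escape; upon failure (a return, observed at a finite stopping time) move on to the first pre-regeneration point never visited before and attempt again. Each attempt fails with conditional probability at most $1-c_0$, so the number of attempts is dominated by a geometric random variable and some attempt succeeds a.s.; thus a.s. there is a regeneration point beyond level $m$, and intersecting over $m\in\N$ yields infinitely many.
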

\begin{proof}
This can be proven along exactly the same lines as for $(Y_n)_{n\in\N_0}$
in \cite[Lemma 5.1]{Axelson-Fisk+H"aggstr"om:2009b}, as the argument there
only relies on a uniform lower bound on the escape probability at any
pre-regeneration point $u$. Here, \eqref{eqn:p_esc^p} gives this estimate.
\end{proof}

\begin{lemma} \label{Lem:moment estimate for tau_1^p}
Let $\lambda > \lambda^*$. Then there exists $\delta >0$ such that
\begin{equation*}
\E^\circ \big[ e^{\delta (\rho_1^\mathrm{p} - \min_{j \in \N}\, 
	\x(Y_j^\mathrm{p}))} \big] < \infty.
\end{equation*}
Furthermore, $\E^\circ [(\tau_1^\mathrm{p})^\kappa ] < \infty$ for any 
$\kappa>0$.
\end{lemma}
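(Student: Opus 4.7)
The plan is a Sznitman-style regeneration construction carried out on the pruned environment, combining the uniform quenched escape probability \eqref{eqn:p_esc^p} with the exponential tails of the gaps between pre-regeneration points. Since a pre-regeneration point of $\omega$ has its upper vertex isolated and hence lies outside every trap, pruning preserves all pre-regeneration points, so $\omegaprune$ inherits a bi-infinite sequence of pre-regeneration points with i.i.d.\ gaps $(G_k)_{k \in \Z}$ under $\Prmp^\circ$. These gaps have exponential moments, which follows from the cycle-stationary block decomposition and the fact that the number of consecutive non-pre-regenerating cycles is geometrically distributed.

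Enumerate by $V_1^\mathrm{p}, V_2^\mathrm{p}, \ldots$ the pre-regeneration points of $\omegaprune$ strictly to the right of $M \defeq \min_{j \in \N} \x(Y_j^\mathrm{p})$, which is finite $\Prob^\circ$-a.s.\ by transience of $(Y_n^\mathrm{p})_{n \in \N_0}$ (Lemma \ref{Lem:transience pruned walk}). Transience also implies that each $V_k^\mathrm{p}$ is visited a.s., and at the first visit the quenched probability of never returning is at least the uniform constant $\pesc > 0$ derived in \eqref{eqn:p_esc^p}. Applying the strong Markov property at the successive first-visit times to $V_1^\mathrm{p}, V_2^\mathrm{p}, \ldots$, the index $N$ of the first $V_k^\mathrm{p}$ that is also a regeneration point is stochastically dominated by a $\mathrm{geom}(\pesc)$ random variable, independent of $(G_k)_{k \in \N}$. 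Consequently
\begin{equation*}
\rho_1^\mathrm{p} - M \;\leq\; \sum_{k=1}^{N} G_k,
\end{equation*}
and a standard moment-generating-function computation for a geometric sum of i.i.d.\ variables with a small exponential moment yields the first claim for all sufficiently small $\delta > 0$.

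For the moment bound on $\tau_1^\mathrm{p}$ I would decompose this time along the same hitting times, writing $\tau_1^\mathrm{p}$ as the time to first reach $V_1^\mathrm{p}$ plus the inter-hitting times of consecutive $V_k^\mathrm{p}$ up to $k = N$. Each such time is bounded by the traversal time of $(Y_n^\mathrm{p})_{n \in \N_0}$ across a piece of $\omegaprune$ of length $G_k$ in which there are no traps and the conductances $\cp$ obey bounded local ratios (depending only on $\lambda$ and $p$); a biased random walk on such a piece has quenched hitting times with all polynomial moments, growing polynomially in $G_k$. Combined with the exponential tails of $G_k$ and of $N$ from the first step, H\"older's inequality yields $\E^\circ[(\tau_1^\mathrm{p})^\kappa] < \infty$ for every $\kappa > 0$.

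The most delicate step is the quenched polynomial bound on the traversal time across a block: one must handle the backtracking of $(Y_n^\mathrm{p})_{n \in \N_0}$ across boundaries of earlier blocks, but the positive drift (guaranteed by $\lambda > \lambda^* = \log(2)/2$, exactly as in the proof of Lemma \ref{Lem:transience pruned walk}) together with the exponential growth of $\cp$ in $\x$ makes such backtracking exponentially unlikely in its depth, and a standard effective-resistance estimate converts this into the desired polynomial moments of the traversal time. Full exponential moments of $\tau_1^\mathrm{p}$ are not needed and in any event would require a more careful balance between the exponential decay of the gap distribution and the growth of quenched hitting-time moments in the gap.
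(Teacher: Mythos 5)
Your overall architecture is the same as the paper's (which itself only sketches this proof): a geometric number of attempted regenerations, controlled by the uniform quenched escape bound \eqref{eqn:p_esc^p}, combined with exponential tails for the distances between consecutive attempt locations, and a brute-force visit-count argument for the polynomial moments of $\tau_1^\mathrm{p}$. However, there is a concrete flaw in the way you set up the attempt locations. You enumerate the pre-regeneration points $V_1^\mathrm{p},V_2^\mathrm{p},\ldots$ strictly to the right of $M=\min_{j}\x(Y_j^\mathrm{p})$ and then claim that the index $N$ of the first successful attempt is stochastically dominated by a $\mathrm{geom}(\pesc)$ variable \emph{independent of} $(G_k)_{k\in\N}$. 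Neither half of this is justified as stated. First, $M$ is determined only by the whole trajectory, so the ``first-visit times to $V_1^\mathrm{p},V_2^\mathrm{p},\ldots$'' are not stopping times and the strong Markov property cannot be applied to them directly. Second, even after repairing this (say by enumerating the pre-regeneration points to the right of $\bfnull$ and treating the backtracking depth $|M|$ separately), the independence claim is false: a failed escape from $V_k^\mathrm{p}$ means the walk makes an excursion to the right of $V_k^\mathrm{p}$ and returns, and this excursion reveals (and biases) the environment containing the subsequent gaps $G_{k+1},G_{k+2},\ldots$, so $N$ and the gap sequence are genuinely dependent. Your ``standard moment-generating-function computation for a geometric sum of i.i.d.\ variables'' therefore does not apply as written. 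This is exactly the point where the paper's proof introduces the \emph{fresh point} construction: the next attempt location is taken to be the first pre-regeneration point beyond the region already explored before the second visit to the current fresh point, so that (by the strong Markov property for the walk and the cluster, with cycles revealed upon first visit) the inter-fresh-point distances are i.i.d.\ with exponential tails (backtracking depth plus a geometric overshoot from the percolation Markov property), and the success indicators can be decoupled from them. Alternatively one can salvage your version by conditioning on the whole environment, using that the quenched escape bound is uniform in $\omega$, and then paying a Cauchy--Schwarz/H\"older factor instead of independence; but some such device is needed and is missing from your argument.

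The second half of your proposal (bounding $\tau_1^\mathrm{p}$ by traversal times with quenched polynomial moments, via the uniform escape probability and geometric domination of the number of visits to each vertex, then H\"older against the exponential tails of the gaps and of the backtracking depth) is in the spirit of the paper's reference to the brute-force estimate of Lemma 6.5 in \cite{Gantert+al:2018} and is fine at the level of detail given, once the first part is repaired so that $\rho_1^\mathrm{p}-M$ indeed has exponential moments.
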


Both statements still hold true when $\E^\circ$ is replaced by $\E$.

\begin{proof}
We shall only give an informal description of the proof as the details of 
it can be adapted from the proofs of Lemmas 6.3 through 6.5 in \cite{Gantert+al:2018}.
The basic idea is to consider the walk $(Y_n^\mathrm{p})_{n\in\N_0}$ at 
fresh points.
The first fresh point $F_1^\mathrm{p}$ is the first pre-regeneration 
point to the right of the origin visited by the walk 
$(Y_n^\mathrm{p})_{n\in\N_0}$.
If after the first visit to this fresh point the random walk never 
returns to it, then $F_1^\mathrm{p} = R_1^\mathrm{p}$.
Otherwise, the random walk will return to $F_1^\mathrm{p}$. In this 
case, the second fresh point $F_2^\mathrm{p}$ is the first 
pre-regeneration point to the right of $F_1^\mathrm{p}$
that has not been visited by the random walk before hitting 
$F_1^\mathrm{p}$ for the second time, and so on (see Lemma 6.4 in 
\cite{Gantert+al:2018}
for the construction for the original walk).
By the strong Markov property (for the walk and the cluster where
a cycle to the right of the origin in the pruned cluster is revealed upon the 
first visit of the walk to this cycle), the distances between two fresh 
points are i.i.d.\ given they are finite.
Using the uniform bound on the resistance to $+\infty$ given in the 
proof of Lemma \ref{Lem:transience pruned walk},
valid for $\lambda > \lambda^*$, the walk will visit at most a geometric 
number of fresh points before hitting a fresh point
from which it escapes to $+\infty$ without ever returning to that point.
If, on the other hand, the distance between two consecutive fresh points, a left and a right one,
is large, say $\geq 2m$, then there are two options.
Either the walk made an excursion of length at least $m$ to the right between the first two visits
of the walk to the left fresh point, or there is no pre-regeneration point
on the percolation cluster from distance $m$ to distance $2m$ to the 
right of the left fresh point.
Both possibilities are exponentially unlikely in $m$.
The first one because it requires the walk to backtrack at least $m$ 
steps to the left, which has probabilty bounded by a constant times
$(e^{-2\lambda}/(1-e^{-2\lambda}))^{-m}$ (adapt the proof of Lemma 6.3 
in \cite{Gantert+al:2018} with the new conductances to see this).
The second one because of the Markov property of the original 
percolation cluster $\omega$,
which implies that when exploring the cluster from the left to the 
right, at any point, the next pre-regeneration point to the right is 
only a geometric distance away.
Consequently, $\rho_1^\mathrm{p}$ can be bounded from above by a 
geometric number of independent random variables
all stochastically bounded by a nonnegative integer-valued random 
variable with some finite exponential moment.
From this, standard large deviation estimates imply that 
$\rho_1^\mathrm{p}$ has exponentially decaying tails.
The proof of $\E^\circ [(\tau_1^\mathrm{p})^\kappa ] < \infty$ for 
arbitrary $\kappa>0$ can be adapted from the proof of Lemma 6.5 in 
\cite{Gantert+al:2018},
a brute-force estimate which carries over immediately.
\end{proof}

\subsection{Proof of Proposition \ref{Prop:tail estimate}}
We are now ready to give the proof of the tail result for the regeneration times.

\begin{proof}[Proof of Proposition \ref{Prop:tail estimate}]
For each $n \in \N$, we have
\begin{align*}
\Prob(\tau_2 - \tau_1 \geq n)
&\leq \Prob\big((\tau_2-\tau_1)^\B \geq n/2\big)
+ \Prob\big((\tau_2-\tau_1)^\mathrm{traps} \geq n/2\big).
\end{align*}
The time spent on the backbone can be neglected due to
Lemma \ref{Lem:backbone time}. We now estimate the time spent in traps.
From Lemma 4.1 in \cite{Gantert+al:2018}, we infer
\begin{equation*}
\Prob\big((\tau_2-\tau_1)^\mathrm{traps} \geq n\big)
= \Prob^\circ\big(\tau_1^\mathrm{traps} \geq n | X_k \geq 1 \text{ for all } k \in \N\big).
\end{equation*}
If $\bfnull$ is a pre-regeneration point (or just connected to $+\infty$
via a path that does not visit vertices with $\x$-coordinate strictly
smaller than $0$),
the argument that leads to (24) in \cite{Axelson-Fisk+H"aggstr"om:2009b}
gives
\begin{equation*}
P_{\omega,\lambda}(Y_n \not = \bfnull \text{ for all } n \in \N)
\geq \frac{(\sum_{k=0}^{\infty} e^{-\lambda
		k})^{-1}}{e^{\lambda}+1+e^{-\lambda}}
= \frac{1-e^{-\lambda}}{e^{\lambda}+1+e^{-\lambda}}
\eqdef \pesc.
\end{equation*}
Integration with respect to $\Prmp^\circ$ gives
\begin{equation*}
\pesc \leq \Prob^\circ(Y_n \not = \bfnull \text{ for all } n \geq 1) \leq 1.
\end{equation*}
Notice that the same bound holds when $\Prob^\circ$ is replaced by $\Prob$.
Thus
\begin{equation*}
\Prob^\circ\big(\tau_1^\mathrm{traps} \geq n | X_k \geq 1 \text{ for all } k \in \N\big)
\leq \frac1{\pesc} \Prob^\circ\big(\tau_1^\mathrm{traps} \geq n, X_k \geq 1 \text{ for all } k \in \N \big).
\end{equation*}
Analogously, when estimating $\Prob( \tau_1 \geq n )$, the time spent on the backbone can be neglected by Lemma \ref{Lem:moment estimate for tau_1^p},
so that it suffices to bound $\Prob(\tau_1^\mathrm{traps} \geq n)$ in this case.
We shall only estimate $\Prob^\circ(\tau_1^\mathrm{traps} \geq n, X_k \geq 1 \text{ for all } k \in \N)$ as $\Prob(\tau_1^\mathrm{traps} \geq n)$
can be estimated similarly.
To this end, we consider $(Y_n)_{n\in\N_0}$ and $(Y_n^\mathrm{p})_{n\in\N_0}$ as constructed in Section \ref{section:coupling}.
Further, we use the family $T^{\mathrm{ann}}_{ij}$, $i\in\Z$, $j\in\N$
of random variables introduced in Lemma \ref{Lem:annealed tail estimate for single trap}.
By construction, the number of times $(Y_n)_{n\in\N_0}$ visits any node in $\omega$ which is not in the interior
of a trap piece can be bounded by the number of times
$(Y_n^\mathrm{p})_{n\in\N_0}$ visits the corresponding node in $\omegaprune$.
This holds in particular for all trap entrances.
By Lemma \ref{Lem:pruned regeneration points},
there exist regeneration points of $( Y_n^\mathrm{p} )_{n\in\N_0}$.
These also are regeneration points for $(Y_n)_{n\in\N_0}$.
We have
\begin{equation*} 
\Prob^\circ\big(\tau_1^\mathrm{traps} \geq n, X_k \geq 1 \text{ for all } k \in \N \big)
\leq	\Prob^\circ\bigg( \sum_{i=1}^T \sum_{j=1}^{V_i} T_{ij} \geq n \bigg)
\leq	\Prob^\circ\bigg( \sum_{i=1}^{\rho_1^\mathrm{p} } 
\sum_{j=1}^{\tau_1^\mathrm{p} } T^{\mathrm{ann}}_{ij} \geq n \bigg),
\end{equation*}
where $T$ is the number of traps in $[0,\rho_1)$,
$V_i$ is the number of visits to the $i$th trap, $T_{ij}$ is the time $(Y_n)_{n\in\N_0}$ spends
during the $j$th excursion into the $i$th trap,
and $(T^{\mathrm{ann}}_{ij})_{i,j \in \N}$ is a family of random variables independent of $(\omegaprune,(Y_n^\mathrm{p})_{n\in\N_0})$
such that the $T^{\mathrm{ann}}_{ij}$, $i,j \in \N$ are independent given the family $(L_i)_{i \in \N}$
with $T^{\mathrm{ann}}_{ij}$ being distributed as the duration of one excursion of $(Y_n)_{n\in\N_0}$ under $P_{\omega,\lambda}$
into a trap of length $L_i$.
Since $(\rho_1^\mathrm{p}, \tau_1^\mathrm{p})$
and $(T^{\mathrm{ann}}_{ij})_{i,j\in\N}$ are independent, we can write this as
\begin{align}
\Prob^\circ\bigg( \sum_{i=1}^{ \rho_1^\mathrm{p} }\sum_{j=1}^{ \tau_1^\mathrm{p} }T^{\mathrm{ann}}_{ij} \geq n \bigg)
&= \sum_{k=1}^\infty \sum_{l=1}^\infty
\Prob^\circ \bigg(\rho_1^\mathrm{p} = k,\tau_1^\mathrm{p} = l, \sum_{i=1}^k \sum_{j=1}^l T^{\mathrm{ann}}_{ij} \geq n \bigg) \notag\\
&= \sum_{k=1}^\infty \sum_{l=1}^\infty 
\Prob^\circ \big(\rho_1^\mathrm{p} = k, \tau_1^\mathrm{p} = l \big)
\cdot \Prob\bigg( \sum_{i=1}^k \sum_{j=1}^l T^{\mathrm{ann}}_{ij} \geq n \bigg).	\label{eqn:independence Y^p and T_ij}
\end{align}
First look at $\Prob( \sum_{j=1}^l T^{\mathrm{ann}}_{ij} \geq n)$
for fixed $i$ and $l\in\N$. We write this as
\begin{equation*} 
\Prob\bigg( \sum_{j=1}^l T^{\mathrm{ann}}_{ij} \geq n \bigg)
= \Prob\bigg( \sum_{j=1}^l T^{\mathrm{ann}}_{ij} \geq n, \ell_i < m_0\vee m_1 \bigg)
+ \Prob\bigg( \sum_{j=1}^l T^{\mathrm{ann}}_{ij} \geq n,\ell_i \geq m_0\vee m_1 \bigg),
\end{equation*}
with $m_0,m_1$ as in Lemma \ref{Lem:annealed tail estimate for single trap}.
With $P_{m,\lambda}$ and $T_{ij}^\mathrm{qu}$, $i,j \in \N$ as in Lemma \ref{Lem:quenched tail estimate for single trap - lazy},
Markov's inequality and the convexity of $x\mapsto x^{\alpha+1}$ on
$[0,\infty)$ give
\begin{align*}
\Prob\bigg( \sum_{j=1}^l T^{\mathrm{ann}}_{ij} \geq n, \ell_i < m_0\vee m_1 \bigg)
&= \sum_{m=1}^{m_0\vee m_1 -1} \Prmp ( \ell_i = m ) 
P_{m,\lambda} \bigg( \sum_{j=1}^l T_{ij}^\mathrm{qu} \geq n \bigg) \\
&\leq (m_0\vee m_1) \underset{ m\in\{1,...,m_0\vee m_1-1\} }{\max}	
E_{m,\lambda} \bigg[ \bigg(\sum_{j=1}^l T_{ij}^\mathrm{qu} \bigg)^{\! \alpha+1} \bigg] \cdot n^{ -(\alpha+1) } \\
&\leq (m_0\vee m_1) \underset{ m\in\{1,...,m_0\vee m_1 -1\} }{\max} 
E_{m,\lambda} \bigg[ l^\alpha \sum_{j=1}^l (T_{ij}^\mathrm{qu})^{\alpha+1} \bigg] n^{-(\alpha+1)} \\
&= (m_0\vee m_1) l^{\alpha+1} n^{-(\alpha+1)} \underset{ m\in\{1,...,m_0\vee m_1 -1\} }{\max} 
E_{m,\lambda}\big[ (T_{i1}^\mathrm{qu})^{\alpha+1} \big].
\end{align*}
Let $N(k)$ be the number of times the walk $(S_n)_{n\in\N_0}$ visits vertex $k\in\{1,\ldots,m\}$.
Note that in order to describe $T_{i1}^\mathrm{qu}$, we also need to take lazy steps into account.
This means that, under $P_{m,\lambda}$, we have the following identity in law,
\begin{equation*}
T_{i1}^\mathrm{qu}	\disteq \sum_{k=1}^m \sum_{l=1}^{N(k)} (1+Z_{k,l})
\end{equation*}
where $N(k)$ has distribution $\text{geom}(\mathsf{e}_k)$ and the $Z_{k,l}$'s are a family of independent random variables, independent of $(N(1),\ldots,N(k))$,
with distribution $\text{geom}\big(\frac{e^\lambda+e^{-\lambda}}{e^\lambda+1+e^{-\lambda}}\big)$ for $k=1,\ldots,m-1$, $l\in\N$ and  $\text{geom}\big(\frac{e^{-\lambda}}{e^\lambda+1+e^{-\lambda}}\big)$ for $k=m$, $l\in\N$, respectively.
Since $m<m_0\vee m_1$ and the escape probability $\mathsf{e}_k$ is nonincreasing in $k$, we can bound $\mathsf{e}_k$ 
by $\mathsf{e}_{m_0 \vee m_1}$ for all $k\in\{1,\ldots,m\}$. We use this to stochastically bound $N(k)$.
In combination with the convexity of $x\mapsto x^{\alpha+1}$ on $[0,\infty)$ this leads to
\begin{align*}
E_{m,\lambda}\big[ (T_{i1}^\mathrm{qu})^{\alpha+1} \big]
&= E_{m,\lambda} \! \bigg[ \! \bigg(\sum_{k=1}^m \sum_{l=1}^{N(k)} (1\!+\!Z_{k,l}) \bigg)^{\!\!\alpha+1}\bigg]
\leq m^\alpha \sum_{k=1}^m E_{m,\lambda}\big[ N(k)^{\alpha+1} \big] E_{m,\lambda}\big[(1\!+\!Z_{k,m})^{\alpha+1}\big] \\
&\leq (m_0\vee m_1)^{\alpha+1} E_{m,\lambda}[N^{\alpha+1}] E_{m,\lambda}[(1\!+\!Z)^{\alpha+1}]
\end{align*}
where $N \sim \text{geom}(\mathsf{e}_{m_0\vee m_1})$ and $Z \sim \text{geom}\big(\frac{e^{-\lambda}}{e^\lambda+1+e^{-\lambda}}\big)$.
Thus
\begin{equation*} 
\underset{ m\in\{1,...,m_0\vee m_1 -1\} }{\max} 
E_{m,\lambda}\big[ (T_{i1}^\mathrm{qu})^{\alpha+1} \big]
\leq c(m_0,m_1,\lambda) 
= c(\lambda)
\end{equation*}
for some constant $c(\lambda)$. Combining this with the estimate for
$\sum_{j=1}^l T_{ij}^\mathrm{ann}$ in the case of traps of length larger or equal to $m_0\vee m_1$ from Lemma \ref{Lem:annealed tail estimate for single trap},
we get that there exists $d' = d'(p,\lambda)>0$ such that
\begin{equation*}
\Prob\bigg( \sum_{j=1}^l T^{\mathrm{ann}}_{ij} \geq n \bigg)	
\leq d' \Big( l^{\alpha+1} n^{-(\alpha+1)} + l^{\alpha+1} n^{-\alpha} + l e^{-\mu \frac{n}{6lr_\lambda}} \Big).
\end{equation*}
We further conclude
\begin{align}\notag
\Prob \bigg( \sum_{i=1}^k \sum_{j=1}^l T^{\mathrm{ann}}_{ij} \geq n \bigg) 
&\leq k \Prob\bigg( \sum_{j=1}^l T_{1j}^\mathrm{ann} \geq \frac{n}{k} \bigg) \\\notag
&\leq k d' \Big( l^{\alpha+1} \Big(\frac{n}{k}\Big)^{-(\alpha+1)} +  l^{\alpha+1} \Big(\frac{n}{k}\Big)^{-\alpha} + l e^{-\mu \frac{n}{6lr_\lambda k}}\Big) \\
&\leq k^{\alpha+2} l^{\alpha+1} d'\big(o(n^{-\alpha}) + n^{-\alpha} \big) + k l d'e^{-\mu \frac{n}{6lr_\lambda k}}. \label{eqn:annealed_sum}
\end{align}
Note that when estimating $\tau_1$ under $\Prob$, all calculations using Lemma
\ref{Lem:annealed tail estimate for single trap} involve an additional
factor of $\log n$. Combining \eqref{eqn:independence Y^p and T_ij} and
\eqref{eqn:annealed_sum}, we get
\begin{align}
\Prob^\circ\big( \tau_1^\mathrm{traps} \geq n \big)
&\leq d' \sum_{k,l=1}^\infty 
\Prob^\circ\big( \rho_1^\mathrm{p} = k, \tau_1^\mathrm{p} = l \big)
k^{\alpha+2} l^{\alpha+1} n^{-\alpha}(1+o_n(1)) \notag \\
&\hphantom{\leq} + d' \sum_{k,l=1}^\infty 
\Prob^\circ\big(\rho_1^\mathrm{p} = k, \tau_1^\mathrm{p} = l \big) k l e^{-\mu\frac{n}{6lr_\lambda k}}. \label{eqn:upper estimate subexp and exp parts}
\end{align}
For $k,l\in\N$, we write
\begin{equation*} 
\Prob^\circ\big( \rho_1^\mathrm{p} = k,
\tau_1^\mathrm{p} = l \big)
				=	\Prob^\circ\big( \tau_1^\mathrm{p} = l \big) \cdot
\Prob^\circ \big( \rho_1^\mathrm{p} = k
									| \tau_1^\mathrm{p} = l \big).
\end{equation*}
As the second factor vanishes for $k>l$, we get
\begin{align*}
\sum_{k,l=1}^\infty \Prob^\circ\big(\rho_1^\mathrm{p} = k, \tau_1^\mathrm{p} = l \big)
 k^{\alpha+2} l^{\alpha+1}
&= \sum_{l=1}^\infty \Prob^\circ\big(\tau_1^\mathrm{p} = l \big)
l^{\alpha+1} \sum_{k=1}^l \Prob^\circ\big( \rho_1^\mathrm{p} = k | \tau_1^\mathrm{p} = l \big) k^{\alpha+2}	\\
&\leq \sum_{l=1}^\infty \Prob^\circ\big(\tau_1^\mathrm{p} = l \big) l^{2\alpha+4}.
\end{align*}
Hence, it follows from Lemma \ref{Lem:moment estimate for tau_1^p} that the first sum on the right-hand side of \eqref{eqn:upper estimate subexp and exp parts} is bounded by a constant times $n^{-\alpha}$. 
For $\tau_1$ under $\Prob$, this becomes a constant times $n^{-\alpha} \log n$.
It also follows from Lemma \ref{Lem:moment estimate for tau_1^p} and Markov's inequality that, for any $\kappa>0$,
\begin{align*}
\sum_{k,l=1}^\infty \Prob^\circ\big(\rho_1^\mathrm{p} = k, \tau_1^\mathrm{p} = l\big) k l e^{-\mu \frac{n}{6lr_\lambda k}}
&= \sum_{l=1}^\infty \Prob^\circ(\tau_1^\mathrm{p}=l) l \sum_{k=1}^l \Prob^\circ \big(\rho_1^\mathrm{p} = k | \tau_1^\mathrm{p} = l \big) k  e^{-\mu \frac{n}{6lr_\lambda k}}	\\
&\leq \sum_{l=1}^\infty \Prob^\circ\big(\tau_1^\mathrm{p} = l \big) l^3 e^{-\mu \frac{n}{6l^2r_\lambda }}
\leq \E^\circ\big[ (\tau_1^{\mathrm{p}})^\kappa \big] \sum_{l=1}^\infty l^{-\kappa+3} e^{-\mu \frac{n}{6l^2r_\lambda }}.
\end{align*}
Setting $l^* \defeq \sqrt{\frac{\mu}{6r_\lambda(\alpha+1)}\frac{n}{\log n}}$ we get
\begin{align*}
\sum_{l=1}^\infty l^{-\kappa+3} e^{-\mu \frac{n}{6l^2r_\lambda }}
&= \sum_{l \leq l^*} l^{-\kappa+3} e^{-\mu \frac{n}{6l^2r_\lambda }} + \sum_{l>l^*} l^{-\kappa+3} e^{-\mu\frac{n}{6l^2r_\lambda }} \\
&\leq e^{-\mu\frac{n}{6r_\lambda(l^*)^2}} \sum_{l=1}^\infty l^{-\kappa+3} + (l^*)^{\frac{-\kappa+3}{2}} \sum_{l=1}^\infty l^{\frac{-\kappa+3}{2}}
= o(n^{-\alpha})
\end{align*}
for sufficiently large $\kappa$.
\end{proof}

\begin{appendix}

\section{Uniform integrability of renewal counting processes}	\label{sec:renewal theory}

In our proof of Theorem \ref{Thm:noncritical regimes},
we use that the suitably renormalised renewal counting process
of a delayed renewal process is uniformly integrable.
The following result is (more than) sufficient for our purposes.

\begin{proposition}	\label{Prop:uniform integrability of renewal counting
process}
Let $\xi_2,\xi_3,\ldots$ be a sequence of i.i.d.\ random variables
independent of $\xi_1$
such that $\Prm(\xi_k > 0) = 1$ for $k \in \N$, where $\Prm$ denotes the
underlying probability measure.
Suppose there are constants $d>0$ and $\alpha \in (1,2]$ such that
$\Prm(\xi_2 > t) \leq dt^{-\alpha}$ for all $t \geq 1$.
Then, with $\mu \defeq \Erm[\xi_2]$, $S_n \defeq \sum_{k=1}^n \xi_k$,
$\nu(t) \defeq \inf\{n \in \N: S_n > t\}$
and $a(t) \defeq t^{1/\alpha}$ if $\alpha \in (1,2)$ and $a(t) \defeq
\sqrt{t \log t}$ if $\alpha=2$, it holds that
\begin{equation}	\label{eq:nu(t)^+ u.i.}
\bigg(\exp\Big(\theta \frac{\nu(t)-t/\mu}{a(t)}\Big)\bigg)_{t \geq
2}	\quad	\text{is uniformly integrable for every } \theta > 0
\end{equation}
and
\begin{equation}	\label{eq:nu(t)^- u.i.}
\bigg(\Big(\frac{\nu(t)-t/\mu}{a(t)}\Big)_-^p\bigg)_{t \geq
2}	\quad	\text{is uniformly integrable for every } p \in (1,\alpha)
\end{equation}
for which there exists an $r > p$ with $\E[\xi_1^r]<\infty$.
\end{proposition}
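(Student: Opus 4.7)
The plan is to exploit the duality $\{\nu(t) > n\} = \{S_n \leq t\}$ to translate both statements into one-sided large-deviation estimates for the centered partial sums $S_n - n\mu$, uniform in $t$. Write $Y_t \defeq (\nu(t) - t/\mu)/a(t)$. Since $\xi_k \geq 0$, upper tails of $\nu(t)$ involve only $\sum_{k \geq 2}\xi_k$ and $\xi_1$ drops out; for lower tails we split on $\{\xi_1 > \tfrac12\mu x a(t)\}$, whose contribution is handled separately using Markov and $\Erm[\xi_1^r] < \infty$ with $r > p$.

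For \eqref{eq:nu(t)^+ u.i.}, with $N_x \defeq \lceil t/\mu + x a(t) \rceil$, I would apply the Chernoff bound
\begin{equation*}
\Prm\Bigl(\sum_{k=2}^{N_x} \xi_k \leq t\Bigr) \leq e^{\lambda t}\, \phi_M(\lambda)^{N_x - 1}, \qquad \phi_M(\lambda) \defeq \Erm\bigl[e^{-\lambda(\xi_2 \wedge M)}\bigr] \leq 1 - \lambda \Erm[\xi_2 \wedge M] + \tfrac12 \lambda^2 \Erm[(\xi_2 \wedge M)^2],
\end{equation*}
and optimise in $\lambda$ after choosing a truncation level $M = M(t)$. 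The resulting exponent behaves like $-c x^2 \cdot a(t)^2/(t\, \Erm[(\xi_2\wedge M)^2])$; since $a(t)^2/t$ equals $t^{2/\alpha - 1} \to \infty$ for $\alpha \in (1,2)$ and $\log t \to \infty$ for $\alpha = 2$, a modest choice of $M(t)$ balances the drift deficit $\mu - \Erm[\xi_2 \wedge M] = O(M^{1-\alpha})$ against the truncated second moment ($O(M^{2-\alpha})$ for $\alpha < 2$, $O(\log M)$ for $\alpha = 2$), and yields Gaussian-type decay $\Prm(Y_t > x) \leq C\exp(-c' x^2)$, uniformly for $t \geq t_0$. Super-polynomial decay uniformly in $t$ gives $\sup_t \Erm[e^{\theta' Y_t}] < \infty$ for every $\theta' > 0$, whence \eqref{eq:nu(t)^+ u.i.}.

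For \eqref{eq:nu(t)^- u.i.}, with $n_x \defeq \lfloor t/\mu - x a(t)\rfloor$, the main term after the $\xi_1$-split is $\Prm(\sum_{k=2}^{n_x}(\xi_k - \mu) > \tfrac12\mu x a(t))$, and I would invoke the Fuk--Nagaev inequality
\begin{equation*}
\Prm\Bigl(\sum_{k=1}^n (\xi_k - \mu) > y\Bigr) \leq C\, \frac{n}{y^\alpha} + C \exp\Bigl(-c\, \frac{y^2}{n B_M}\Bigr),
\end{equation*}
with $M$ tied to $y = \tfrac12 \mu x a(t)$ and $B_M$ the truncated variance. For $\alpha \in (1,2)$, $y \asymp x t^{1/\alpha}$ gives $n/y^\alpha \asymp x^{-\alpha}$ and a negligible exponential part; for $\alpha = 2$, $y \asymp x\sqrt{t\log t}$ and a suitable $M$ makes $B_M \asymp \log t$, yielding $O(x^{-2}(\log t)^{-1}) + \exp(-c x^2)$. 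In either case $\Prm(Y_t < -x) \leq C x^{-\alpha}$ for $x \geq 1$ uniformly in $t$, which combined with the $\xi_1$ term yields \eqref{eq:nu(t)^- u.i.} via a standard $L^p$ tail criterion, using $p < \alpha$ and $r > p$.

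The principal difficulty is the case $\alpha = 2$: here $\Var(\xi_2)$ can be infinite, so truncation is essential, and the $\sqrt{\log t}$ factor in $a(t)$ is precisely what lets both the Chernoff bound for the upper tail and the Fuk--Nagaev bound for the lower tail absorb the $O(\log M)$ growth of the truncated second moment and come out with the correct uniform-in-$t$ dependence on $x$.
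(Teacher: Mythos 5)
Your treatment of the lower tail \eqref{eq:nu(t)^- u.i.} is essentially sound and is a genuinely different route from the paper's: the paper follows Iksanov--Marynych--Meiners and bounds $\Erm\big[(\nu(\Erm[S_n])-n)_-^r\big]$ by a constant times $\Erm\big[|S_n-\Erm[S_n]|^r\big]=\mathcal{O}(a(n)^r)$ using an Ibragimov--Linnik moment inequality, whereas you split off $\xi_1$ and apply Fuk--Nagaev to get a uniform tail bound of order $x^{-\alpha}+x^{-r}$, which indeed yields bounded $p'$-th moments for some $p'\in(p,\min(\alpha,r))$ and hence uniform integrability. Modulo the usual care with truncation levels (and the $\alpha=2$ bookkeeping you indicate), that half works.

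The upper-tail half \eqref{eq:nu(t)^+ u.i.} has a genuine gap. Writing $Y_t\defeq(\nu(t)-t/\mu)/a(t)$, your claimed uniform bound $\Prm(Y_t>x)\leq C\exp(-c'x^2)$ for all $x$ and all $t\geq t_0$ is false under the stated hypotheses: take $\xi_1,\xi_2$ uniform on $(0,1)$ (the tail assumption holds trivially) and $x$ of order $a(t)^2$; then with $n=\lceil t/\mu+xa(t)\rceil$ one has $\Prm(Y_t>x)=\Prm(S_n\leq t)\geq(t/n)^n=\exp(-n\log(n/t))$, and $n\log(n/t)$ is of order $xa(t)\log t\ll x^2$, so the probability is far larger than $\exp(-c'x^2)$ for large $t$. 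More importantly, the Chernoff bound with the quadratic expansion $\phi_M(\lambda)\leq1-\lambda\Erm[\xi_2\wedge M]+\tfrac12\lambda^2\Erm[(\xi_2\wedge M)^2]$ cannot deliver what the conclusion requires, namely $\sup_{t\geq2}\Erm[\exp(\theta'Y_t^+)]<\infty$ for \emph{every} $\theta'>0$: once $xa(t)\gtrsim t$, the optimizing $\lambda$ stays of constant order and the exponent you obtain is only of order $xa(t)$; for $t$ in a bounded range this is a fixed exponential rate in $x$, which is beaten by $e^{\theta x}$ once $\theta$ exceeds that rate. So the regime of bounded $t$ and $x$ beyond the moderate-deviation range is exactly where the claim "super-polynomial decay uniformly in $t$" breaks down, and it is not covered by your estimates. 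The conclusion is nevertheless true, and your outline can be repaired by a separate argument in that regime -- e.g.\ let $\lambda\to\infty$ in the untruncated Chernoff bound and use $\Prm(\xi_2\leq\varepsilon)\to0$ as $\varepsilon\downarrow0$, which makes $\Prm(S_n\leq t)$ decay super-exponentially in $n$ for bounded $t$ -- or by the paper's route, which avoids tail bounds altogether: it fixes $\lambda$ large depending on $\theta$, evaluates the Laplace transform $\varphi$ at $\lambda/a(t)$, bounds $\Erm[\exp(\theta Y_t)]$ by an explicit expression (including the delay factor $\psi(\lambda)/\varphi(\lambda)$ for $\xi_1$) that is uniformly bounded in $t\geq2$, and then invokes de la Vall\'ee-Poussin.
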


The statements \eqref{eq:nu(t)^+ u.i.} and \eqref{eq:nu(t)^- u.i.}
have been shown in \cite{Iksanov+Marynych+Meiners:2016}
in the case where the $\xi_k$, $k \in \N$ are i.i.d.\
and $\xi_1$ is in the domain of attraction of an $\alpha$-stable law.
Unfortunately, we have not been able to apply a coupling argument
in order to deduce uniform integrability here from the main results in the
cited source.
However, the proofs given in \cite{Iksanov+Marynych+Meiners:2016} apply.
We shall provide a sketch of these proofs with the necessary changes needed
here.

\begin{proof}[Sketch of the proof of Proposition \ref{Prop:uniform
integrability of renewal counting process}]
Let $\theta > 0$, and denote by $\psi$ and $\varphi$
the Laplace transforms of $\xi_1$ and $\xi_2$, respectively,
i.e., $\psi(\lambda) = \Erm[\exp(-\lambda \xi_1)]$ and $\varphi(\lambda)
\defeq \Erm[\exp(-\lambda \xi_2)]$ for $\lambda \geq 0$.
Arguing as in (2.2) of \cite{Iksanov+Marynych+Meiners:2016}, we infer
\begin{align*}
\Erm\Big[\exp\Big(\theta \frac{\nu(t)-t/\mu}{a(t)}\Big)\Big]
&\leq		1 + 	\frac{\psi(\lambda)}{\varphi(\lambda)} \big( e^{\lambda \mu}
\varphi(\lambda) \big)^{\frac t\mu} \int_0^\infty e^x \varphi(\lambda)^{\frac{ x a(t) }{\theta}-1}\, \dx
\end{align*}
where the difference to (2.2) in \cite{Iksanov+Marynych+Meiners:2016} is a
factor $\psi(\lambda)/\varphi(\lambda)$,
which appears here since we allow the first step to have a different law
than the other steps.
Equation (2.7) in \cite[XIII.2]{Feller:1966} and Proposition
\ref{Prop:tail estimate} give
\begin{equation*}
\varphi(\lambda) 	=	1 - \mu \lambda + \lambda \int_0^\infty \!\! (1 -
e^{-\lambda x} ) \Prm(\xi_2 > x) \, \dx
\leq 1 - \mu \lambda + \int_0^\infty \!\! (1-e^{\lambda x}) (1 \wedge
dx^{-\alpha}) \, \dx.
\end{equation*}
The third summand on the right hand side is the second-order term of the
Laplace transform of a random variable with tail probability $1 \wedge d
x^{-\alpha}$ for $x > 0$.
From \cite[Theorem 8.1.6]{Bingham+Goldie+Teugels:1989}, we thus infer that
it is $\mathcal{O}(\lambda^\alpha)$ as $\lambda \to 0$ if $\alpha \in
(1,2)$
and $\mathcal{O}(\lambda^2 |\log \lambda|)$ if $\alpha=2$.
Choosing $\lambda^* \defeq \lambda/ a(t)$, this gives
\begin{align*}
e^{\lambda^* \mu} \varphi(\lambda^*)
&\leq	 \Big(\!1 + \frac{ \mu \lambda}{a(t)} + \mathcal{O}\big(t^{-\frac
2\alpha}\big) \! \Big)
		\bigg(\! 1 - \frac{ \mu \lambda}{a(t)}
		+ \frac{\lambda}{a(t)} \int_0^\infty \!\!\! \big(1 \! - \!
e^{-\frac{\lambda x}{a(t)}} \big) (1 \wedge d x^{-\alpha}) \, \dx  \!
\bigg)
= 1 + \mathcal{O}(t^{-1}),
\end{align*}
thus
\begin{equation*}
\sup_{t \geq 2} \big( e^{\lambda^* \mu} \varphi(\lambda^*)
\big)^{t/\mu}	<	\infty.
\end{equation*}
Further, the proof of (2.3) in \cite{Iksanov+Marynych+Meiners:2016}
applies and gives
\begin{equation*}
\sup_{t \geq t_0} \int_0^\infty e^x \varphi( \lambda^* )^{\frac{x
a(t)}{\theta} -1} \, \dx < \infty
\end{equation*}
for $t_0$ and $\lambda$ sufficiently large.
Uniform integrability of $(\exp( \theta \frac{\nu(t) - t/\mu}{a(t)}))_{t
\geq 2}$ now follows
from the Vall\'ee-Poussin criterion.

Turning to the second assertion, pick $1<p<\alpha$ and $r \in(p,\alpha)$
such that $\E[\xi_1^r]<\infty$.
Following the proof of (2.5) in \cite{Iksanov+Marynych+Meiners:2016}
with mild adaptions, we obtain
\begin{align*}
\Erm\big[ ( \nu\big(\E[S_n]\big) - n)_-^r \big] \leq r + \mathrm{const}
\cdot \Erm[|S_n - \E[S_n]|^r] = \mathcal{O}\big(a(n)^r \big)
\end{align*}
as $n\to\infty$.
Here, the last step follows from
\begin{align*}
\Erm[|S_n-n\mu|^r]
\leq
2^{r-1} \big( \Erm[|S_1-\mu|^r] + \Erm[|S_n-S_1 - (n-1)\mu|^r]\big).
\end{align*}
By assumption, $\Erm[S_1^r] = \Erm[\xi_1^r] < \infty$.
Further, positive and negative part of $\xi_2-\mu$
can be stochastically dominated by a nonnegative random variable with
tails of order $x^{-\alpha}$.
Hence it follows from \cite[Lemma 5.2.2]{Ibragimov+Linnik:1971} that
\begin{equation*}
\Erm[|S_n-S_1-(n-1)\mu|^r|] = \mathcal{O}(a(n)^r)	\quad	\text{as }	n \to
\infty.
\end{equation*}
The rest of the proof is as in \cite{Iksanov+Marynych+Meiners:2016}.
\end{proof}

\end{appendix}

\section*{Acknowledgements}
We would like to thank Volker Betz for many helpful discussions. 
Further, we are grateful to Alexander Marynych for pointing us to reference \cite{Flajolet+Sedgewick:1995}.

\bibliographystyle{plain}
\bibliography{RWRE}

\end{document}